\def\makeheadbox{}
\pgfplotsset{compat=1.8}
\definecolor{red}{rgb}{0.7592, 0.3137, 0.3020}
\definecolor{blu}{rgb}{0.3098, 0.5059, 0.7412}
\definecolor{grn}{rgb}{0,0.4,0}
\definecolor{org}{rgb}{1.0,0.5,0.0}
\definecolor{prp}{rgb}{0.4,0.35,0.8}
\newcommand{\pd}[2]{\ensuremath{\frac{\partial #1}{\partial #2}} }
\newcommand{\fd}[2]{\ensuremath{\frac{d #1}{d #2}} }
\newcommand{\mat}[1]{{\boldsymbol{\mathrm{#1}} }}
\renewcommand{\vec}[1]{{\boldsymbol{\mathrm{#1}} }}
\newcommand{\EE}{\ensuremath{\mathbb{E}} }
\newcommand{\FF}{\ensuremath{\mathbb{F}} }
\newcommand{\MM}{\ensuremath{\mathbb{M}} }
\newcommand{\VV}{\ensuremath{V}}
\newcommand{\UU}{\ensuremath{U}}
\newcommand{\QQ}{\ensuremath{Q}}
\newcommand{\PP}{\ensuremath{P}}
\spnewtheorem{constraint}{Constraint}{\itshape}{\rmfamily}
\newcommand{\eref}[1]{(\ref{#1})}
\newcommand{\fref}[1]{Fig.~\ref{#1}}
\newcommand{\aref}[1]{Appendix~\ref{#1}}
\newcommand{\sref}[1]{\S\ref{#1}}
\begin{document}
\title{An Energy Stable Approach for Discretizing Hyperbolic Equations with
  Nonconforming Discontinuous Galerkin Methods}
\titlerunning{Nonconforming DG}

\author{Jeremy E. Kozdon%
\and
Lucas C. Wilcox%
\thanks{The views expressed in this document are those of the authors and do not
reflect the official policy or position of the Department of Defense or the U.S.
Government.\\ Approved for public release; distribution unlimited}}
\institute{Jeremy~E.~Kozdon \and Lucas~C.~Wilcox \at{} Department of Applied
Mathematics, Naval Postgraduate School, Monterey, CA, USA \\
\email{\{jekozdon,lwilcox\}@nps.edu}}

\date{23 February 2018}

\maketitle
\begin{abstract}
  When nonconforming discontinuous Galerkin methods are implemented for
  hyperbolic equations using quadrature, exponential energy growth can result
  even when the underlying scheme with exact integration does not support such
  growth.  Using linear elasticity as a model problem, we propose a
  skew-symmetric formulation that has the same energy stability properties for
  both exact and inexact quadrature-based integration. These stability
  properties are maintained even when the material properties are variable and
  discontinuous, and the elements are non-affine (e.g., curved). Additionally,
  we show how the nonconforming scheme can be made conservative and constant
  preserving with variable material properties and curved elements. The analytic
  stability, conservation, and constant preserving results are confirmed through
  numerical experiments demonstrating the stability as well as the accuracy of
  the method.

  \keywords{discontinuous Galerkin \and nonconforming meshes \and energy
  stability \and linear elasticity \and skew-symmetry}
  \subclass{65M12 \and 65M60}
\end{abstract}
\section{Introduction}
In this paper, we consider the energy stability of the (semi-discrete)
discontinuous Galerkin method on nonconforming meshes. Two key building
blocks for the work are the use of a skew-symmetric form of the governing
equations and the projection (or interpolation) of both the trial and test
functions to nonconforming mortar elements.

It has long been recognized that discretizing hyperbolic equations in
skew-symmetric form is advantageous; see for instance~\cite{Zang1991ANM}.
Recently, there has been a resurgence of interest in skew-symmetric formulations
to improve robustness of high-order methods; see~\cite{ChanWangModaveRemacleWarburton2016jcp,FisherCarpenterNordstromYamaleevSwanson2012jcp,Gassner2013,KoprivaGassner2014SISC,KozdonDunhamNordstrom2013,Nordstrom2006,Warburton2010}.
The skew-symmetric form decouples the stability of the discontinuous Galerkin
discretization of the equations into a volume and surface component. The
stability of the volume terms comes directly from the use of the weak derivative
(derivatives of the test functions) and the stability of the surface terms
comes from a suitably chosen flux.  This stability does not require
a discrete integration-by-parts property (i.e.,
summation-by-parts~\cite{KreissScherer1974}) nor a discrete
chain rule. We note that when the operators do have a summation-by-parts
property this can be used to flip the weak derivatives back to strong
derivatives without impacting the stability of the method (though in the
nonconforming method presented here, the surface integrals that result from this
procedure would be over an element face and not the corresponding mortar
element).

Skew-symmetry also ensures that the same treatment is used for nonconforming
faces in the primal and discrete adjoint equations. For example, this can be
useful when developing discretely exact discretizations for hyperbolic
optimization problems~\cite{WilcoxStadlerBuiThanhGhattas2015}.

In this work, we merge the ideas of skew-symmetry with nonconforming meshes.
Kopriva~\cite{Kopriva1996jcp} and Kopriva, Woodruff, and
Hussaini~\cite{KoprivaWoodruffHussaini2002} laid much of the groundwork for the
use of nonconforming discontinuous Galerkin methods for hyperbolic problems.
These methods were analyzed by Bui-Thanh and
Ghattas~\cite{BuiThanhGhattas2012sinum}, where it was shown that when inexact
quadrature is used, constant coefficient problems on affine meshes can have
energy growth that is not present in the method when exact integration is used.
Recently, Friedrich et al.~\cite{FriedrichEtAl2017} proposed a similar scheme for
nonconforming SBP operators. In their work, affine meshes with constant
coefficients are considered and the nonconforming characteristic of the
discretization is due to different approximation within the elements (i.e.,
hanging elements are not considered).  An important difference between this work
and Friedrich et al.\ is that here we consider the impact of nonconforming mesh
geometry and curvilinear coordinate transforms (i.e., non-affine meshes).

Here we expand upon the literature related to skew-symmetric discretizations by
showing that the skew-symmetric approach is of value for nonconforming methods
(either due to hanging nodes in the mesh or changes in element spaces). Two
critical ideas in this work are:
\begin{itemize}
  \item the use of a skew-symmetric form for linear elasticity so that
    integration-by-parts is not needed discretely; and
  \item evaluation of the skew-symmetric surface integrals in a mortar space so
    that all surface integrals are consistent even when variational crimes are
    present.
\end{itemize}
The first point is now well-known in the literature. The second
point is the core contribution of the work and has not been
discussed previously in the literature.  Unlike many previous skew-symmetric
formulations, for nonconforming meshes skew-symmetry is also of value for
constant coefficient problems on affine meshes to remove the potential
exponential growth of energy when inexact quadrature is used\footnote{Even in the
conforming case, the choice of the quadrature rule for some element types may
require a skew-symmetric form even for constant coefficient problems on affine
meshes (e.g., spectral element method quadrature for the quadrilateral face of
pyramids)~\cite{ChanWangModaveRemacleWarburton2016jcp}.}.

In addition to stability, it is often desirable that schemes be both
conservative and constant preserving. These properties can be lost due to a lack of
a discrete product rule if care is not taken in the computation of the metric
terms~\cite{Kopriva2006jsc}. With nonconforming meshes, additional challenges
arise as the mesh may become discretely discontinuous due to different aliasing
errors being incurred across nonconforming faces. By enforcing a set of accuracy
constraints on the discrete operators as well as requiring a discrete version of
the divergence theorem, we are able to show that our proposed
discretization is both conservative and constant preserving. For isoparametric
hexahedral elements this will require that the mesh is made discretely
continuous, or watertight, after a global coordinate transform and that aliasing
errors in the calculation of metric terms be made consistent across
nonconforming faces.

Throughout this paper, we take variable coefficient elastodynamics as a model
problem, though the approach is straightforward to generalize to other linear
wave problems that can be written in skew-symmetric form. In the results section
we consider isoparametric hexahedral elements but the stability analysis
applies to other element types.

\section{Continuous Problem}\label{sec:continuous}
We consider a velocity-stress formulation of time-dependent linear elasticity in
the domain $\Omega\subset \mathbb{R}^d$:
\begin{align}
  \label{eqn:continuous:pde}
  \rho \pd{v_{i}}{t} &= \pd{\sigma_{ij}}{x_{j}},&
  \pd{\sigma_{ij}}{t} &= \frac{1}{2}C_{ijkl}
  \left(\pd{v_{k}}{x_{l}} + \pd{v_{l}}{x_{k}}\right),
\end{align}
where $d = 2$ or $d = 3$. Unless otherwise noted, summation over
$1, 2, 3$ is implied for terms with twice repeated subscripts; free subscripts
can take any of the values $1, 2, 3$; and in the case of $d=2$ the
derivatives with respect to $x_{3}$ are taken to be $0$. Here $v_{i}$ is the
particle velocity in the $x_{i}$ direction and $\sigma_{ij}$ are
the components of the symmetric stress tensor such that $\sigma_{ij} =
\sigma_{ji}$. The scalar $\rho$ is the density of the material
and $C_{ijkl}$ are the components of the fourth-order stiffness tensor that has
the symmetries: $C_{ijkl} = C_{klij} = C_{jikl} = C_{ijlk}$. In the results
\sref{sec:results}, isotropic elasticity is considered where
\begin{align}
  C_{ijkl} = \lambda \delta_{ij}\delta_{kl} + \mu\left(\delta_{ik}\delta_{jl} +
  \delta_{il}\delta_{jk}\right),
\end{align}
with $\lambda$ and $\mu$ denoting Lam\'{e}'s first and second parameters ($\mu$ is
also known as the shear modulus), and
$\delta_{ij}$ denoting the Kronecker delta that takes a value of $1$ if $i=j$ and
$0$ otherwise. Both $\rho$ and $C_{ijkl}$ are allowed to be spatially dependent
and may include jump discontinuities.

Since the focus of this work is the treatment of nonconforming mesh interfaces,
only the traction-free boundary condition on $\partial \Omega$ is
considered. That is, if $n_{i}$ is a component of the outward pointing normal
vector to $\partial\Omega$ and $T_{i} = \sigma_{ij}n_{j}$ are components of the
traction vector, then the boundary condition is $T_{i} = 0$ on $\partial
\Omega$.

Critical to the stability analysis that follows is the existence of an energy
norm in which the energy of the semi-discrete numerical scheme is
non-increasing. This is motivated by the fact that the continuous problem with
the traction-free boundary condition does not support energy growth, where the
energy in the solution is defined as
\begin{align}
  \label{eqn:energy}
  \mathcal{E} &=
  \int_{\Omega}
  \left(
  \frac{\rho}{2} v_{i}v_{i}
  +
  \frac{1}{2}
  \sigma_{ij}
  S_{ijkl}
  \sigma_{kl}
  \right).
\end{align}
Here $S_{ijkl}$ denotes the components of the fourth-order compliance tensor
which is the inverse of the stiffness tensor, i.e.,
$s_{ij}C_{ijkl}S_{klnm}s_{nm} = s_{ij}s_{ij}$ for all symmetric second-order
tensors with components $s_{ij}$.  The energy equation~\eref{eqn:energy} is a
well-defined norm if $\rho > 0$ and the compliance tensor is positive definite,
i.e., $s_{ij}S_{ijkl}s_{kl} > 0$ for all non-zero, symmetric second-order
tensors with components $s_{ij}$ (e.g., see~\cite{Slaughter2002}).
In the case of isotropic elasticity the components of the compliance tensor are
\begin{align}
  S_{ijkl} =
  - \frac{\lambda}{2\mu(2\mu + 3\lambda)} \delta_{ij}\delta_{kl}
  +
  \frac{1}{4\mu}\left(\delta_{ik}\delta_{jl} + \delta_{il}\delta_{jk}\right),
\end{align}
and the compliance tensor is positive definite if $\mu > 0$ and $K = \lambda +
2\mu/3 > 0$; $K$ is known as the bulk modulus of the material.

To see that the traction-free boundary condition does not lead to energy growth,
the time derivative of the energy equation~\eref{eqn:energy} is considered:
\begin{align}
  \label{eqn:energy:diss1}
  \fd{\mathcal{E}}{t}
  &=
  \int_{\Omega}
  \left(
  \rho v_{i}\pd{v_{i}}{t}
  +
  \sigma_{ij}
  S_{ijkl}
  \pd{\sigma_{kl}}{t}
  \right)
  =
  \int_{\Omega}
  \left(
  v_{i}\pd{\sigma_{ij}}{x_{j}}
  +
  \sigma_{ij}
  \pd{v_{i}}{x_{j}}
  \right)
\end{align}
where~\eref{eqn:continuous:pde} has been used to change time derivatives into
spatial derivatives. By applying the divergence theorem and substituting in the
traction-free boundary condition, the energy rate of change is then
\begin{align}
  \label{eqn:energy:diss}
  \fd{\mathcal{E}}{t}
  &=
  \int_{\partial\Omega}
  v_{i}\sigma_{ij}n_{j}
  =
  \int_{\partial\Omega}
  v_{i}T_{i} = 0.
\end{align}
We formalize this in the following theorem.
\begin{theorem}\label{thm:cont:wellposed}
  Problem~\eref{eqn:continuous:pde} with the traction-free boundary condition
  $T_{i} = 0$ on $\partial\Omega$ satisfies the energy estimate $\mathcal{E}(t) =
  \mathcal{E}(0)$.
\end{theorem}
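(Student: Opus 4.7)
The plan is to differentiate the energy $\mathcal{E}$ in time, use the governing PDE \eref{eqn:continuous:pde} to turn time derivatives into spatial derivatives, apply the product rule to collapse the volume integrand into a divergence, then invoke the divergence theorem and the traction-free boundary condition. The two displays \eref{eqn:energy:diss1} and \eref{eqn:energy:diss} in the text already outline exactly this chain.

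First I would commute $d/dt$ with the integral in \eref{eqn:energy}, noting that $\rho$ and $S_{ijkl}$ are time-independent, to obtain the first equality in \eref{eqn:energy:diss1}. Substituting the momentum equation handles the velocity term directly: $\rho v_i \partial_t v_i = v_i \partial_j \sigma_{ij}$. The one nontrivial calculation is simplifying $\sigma_{ij} S_{ijkl} \partial_t \sigma_{kl}$. Inserting the stress-rate equation and using the defining inverse property of the compliance tensor, namely $S_{ijkl} C_{klmn} s_{mn} = s_{ij}$ for any symmetric $s_{mn}$, applied with $s_{mn} = \tfrac{1}{2}(\partial_n v_m + \partial_m v_n)$, reduces this term to $\sigma_{ij}\, \tfrac{1}{2}(\partial_j v_i + \partial_i v_j)$; the symmetry $\sigma_{ij} = \sigma_{ji}$ then lets me drop the symmetrization and write it as $\sigma_{ij} \partial_j v_i$. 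This yields the second equality of \eref{eqn:energy:diss1}.

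Adding the two contributions gives the pointwise identity $v_i \partial_j \sigma_{ij} + \sigma_{ij} \partial_j v_i = \partial_j(v_i \sigma_{ij})$, at which point the divergence theorem converts the volume integral into $\int_{\partial\Omega} v_i \sigma_{ij} n_j = \int_{\partial\Omega} v_i T_i$. The traction-free condition $T_i = 0$ on $\partial\Omega$ makes this vanish, so $d\mathcal{E}/dt = 0$ and integration in time produces $\mathcal{E}(t) = \mathcal{E}(0)$.

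The only place requiring care is the compliance simplification, since it quietly uses both the major/minor symmetries of $C_{ijkl}$ (so that $S_{ijkl}$ acts as a true inverse on the symmetric-tensor subspace) and the symmetry of $\sigma_{ij}$; every other step is the product rule plus the divergence theorem. Assuming the smoothness needed for these classical manipulations (and, implicitly, the positive-definiteness assumptions on $\rho$ and $S_{ijkl}$ so that $\mathcal{E}$ is genuinely a norm), the theorem follows.
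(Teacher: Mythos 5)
Your proposal is correct and follows the same route the paper takes: differentiate \eref{eqn:energy}, use \eref{eqn:continuous:pde} and the inverse property of the compliance tensor to obtain \eref{eqn:energy:diss1}, collapse the integrand to a divergence, and apply the divergence theorem with $T_i = 0$ as in \eref{eqn:energy:diss}. The paper's stated proof is just the final integration step, with the preceding displays carrying the computation you spell out.
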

\begin{proof}
  Integrating~\eref{eqn:energy:diss} gives the result $\mathcal{E}(t) =
  \mathcal{E}(0)$.
  \qed{}
\end{proof}

\section{Notation for the Discontinuous Galerkin Method}
\subsection{Mesh and Geometry Transformation}\label{sec:mesh}
In this work, the finite element mesh is defined in two steps. First, the domain
is partitioned into a set of non-overlapping elements,
the union of which completely covers the domain. It is assumed that there is an
exact transformation between the physical elements and a set of reference
elements. After this, approximation errors are allowed for in the mappings
between the physical and reference elements which may result in gaps and
overlaps in the mesh and produce a set of elements whose union is no longer
equal to the domain. This could arise if one used an isoparametric approximation
for the geometry on a nonconforming mesh. Other approximations for the geometry
are possible as long as the quadrature rules introduced satisfy the constraints
given in \sref{sec:notation:quad}.  Though the scheme is stable with gaps in the
computational mesh, it is not necessarily conservative and constant preserving,
and in \sref{sec:constant} and \aref{app:div} we show how these properties can
be ensured.

Initially, we let $\Omega$ be partitioned into a finite set of non-overlapping,
possibly nonconforming, $d$-dimensional, curved volume elements. Let
$\EE$ denote the set of all elements and $|\EE|$ be the
total number of volume elements. At this initial stage, we require that
$\bigcup_{e\in\EE} e = \Omega$. These requirements on the partitioning of
$\Omega$ imply that before approximation errors are introduced the mesh has no
gaps. In the computational results \sref{sec:results}, $d = 3$ is considered
with curvilinear hexahedral elements, though the stability analysis is more
general.

\begin{figure}
  \centering
  \begin{tikzpicture}[font=\scriptsize,scale=0.75]
    \begin{scope}[xshift=-1cm]
      \draw (1, 0) -- (3, 0) -- (3, 2) -- (1, 2) -- (1, 0);
      \node at (2, 1) {$e^{1}$};
      \draw (3, 0) -- (4, 0) -- (4, 1) -- (3, 1) -- (3, 0);
      \node at (3.5, 0.5) {$e^{2}$};
      \draw (3, 1) -- (4, 1) -- (4, 2) -- (3, 2) -- (3, 1);
      \node at (3.5, 1.5) {$e^{3}$};
    \end{scope}
    \draw (5, 0) -- (7, 0) -- (7, 2) -- (5, 2) -- (5, 0);
    \node at (6, 1) {$e^{1}$};

    \draw (7.5, 0) -- (7.5, 2);
    \draw (7.25, 0) -- (7.75, 0);
    \draw[->] (7.5, 1) -- (7.75, 1);
    \draw (7.25, 2) -- (7.75, 2);
    \node at (7.5, 2.2) {$m^{2}$};

    \draw (4.5, 0) -- (4.5, 2);
    \draw (4.25, 0) -- (4.75, 0);
    \draw[->] (4.5, 1) -- (4.25, 1);
    \draw (4.25, 2) -- (4.75, 2);
    \node at (4.5, 2.2) {$m^{3}$};

    \draw (5, 2.5) -- (7, 2.5);
    \draw (5, 2.25) -- (5, 2.75);
    \draw[->] (6,2.5) -- (6,2.75);
    \draw (7, 2.25) -- (7, 2.75);
    \node at (6, 3) {$m^{4}$};

    \draw (5, -0.5) -- (7, -0.5);
    \draw (5, -0.25) -- (5, -0.75);
    \draw[->] (6,-0.5) -- (6,-0.75);
    \draw (7, -0.25) -- (7, -0.75);
    \node at (6, -1) {$m^{5}$};

    \draw (8, 0) -- (8.75, 0) -- (8.75, 0.75) -- (8, 0.75) -- (8, 0);
    \node at (8.3750, 0.3750) {$e^{2}$};

    \draw (8, 1.25) -- (8.75, 1.25) -- (8.75, 2) -- (8, 2) -- (8, 1.25);
    \node at (8.3750, 1.6250) {$e^{3}$};

    \draw (8, 1) -- (8.75, 1);
    \draw (8, 1.125) -- (8, 0.875);
    \draw (8.75, 1.125) -- (8.75, 0.875);
    \draw[->] (8.375, 1) -- (8.375, 1.125);
    \node at (9.125, 1) {$m^{1}$};

    \begin{scope}[yshift=0.25cm]
    \draw (8, 2) -- (8.75, 2);
    \draw (8, 2.125) -- (8, 1.875);
    \draw (8.75, 2.125) -- (8.75, 1.875);
    \draw[->] (8.375, 2) -- (8.375, 2.125);
    \node at (8.375, 2.375) {$m^{6}$};
    \end{scope}

    \begin{scope}[yshift=-2.25cm]
    \draw (8, 2) -- (8.75, 2);
    \draw (8, 2.125) -- (8, 1.875);
    \draw (8.75, 2.125) -- (8.75, 1.875);
    \draw[->] (8.375, 2) -- (8.375, 1.875);
    \node at (8.375, 1.75) {$m^{7}$};
    \end{scope}

    \begin{scope}[xshift=-3.5cm]
    \draw (12.5, 1.25) -- (12.5, 2);
    \draw (12.375, 1.25) -- (12.625, 1.25);
    \draw (12.375, 2) -- (12.625, 2);
    \draw[->] (12.5, 1.625) -- (12.75, 1.625);
    \node at (13.125, 1.625) {$m^{8}$};

    \begin{scope}[yshift=-1.25cm]
    \draw (12.5, 1.25) -- (12.5, 2);
    \draw (12.375, 1.25) -- (12.625, 1.25);
    \draw (12.375, 2) -- (12.625, 2);
    \draw[->] (12.5, 1.625) -- (12.75, 1.625);
    \node at (13.125, 1.625) {$m^{9}$};
    \end{scope}
    \end{scope}
    \node at (7, 3.5) {full-side mortar};
    \begin{scope}[xshift=1.5cm]
    \draw (10, 0) -- (12, 0) -- (12, 2) -- (10, 2) -- (10, 0);
    \node at (11, 1) {$e^{1}$};

    \draw (9.5, 0) -- (9.5, 2);
    \draw (9.25, 0) -- (9.75, 0);
    \draw[->] (9.5, 1) -- (9.25, 1);
    \draw (9.25, 2) -- (9.75, 2);
    \node at (9.5, 2.2) {$m^{4}$};

    \draw (10, 2.5) -- (12, 2.5);
    \draw (10, 2.25) -- (10, 2.75);
    \draw[->] (11,2.5) -- (11,2.75);
    \draw (12, 2.25) -- (12, 2.75);
    \node at (11, 3) {$m^{5}$};

    \draw (10, -0.5) -- (12, -0.5);
    \draw (10, -0.25) -- (10, -0.75);
    \draw[->] (11,-0.5) -- (11,-0.75);
    \draw (12, -0.25) -- (12, -0.75);
    \node at (11, -1) {$m^{6}$};

    \begin{scope}[xshift=5cm]
    \begin{scope}[yshift=0.25cm]
    \draw (8, 2) -- (8.75, 2);
    \draw (8, 2.125) -- (8, 1.875);
    \draw (8.75, 2.125) -- (8.75, 1.875);
    \draw[->] (8.375, 2) -- (8.375, 2.125);
    \node at (8.375, 2.375) {$m^{7}$};
    \end{scope}

    \begin{scope}[yshift=-2.25cm]
    \draw (8, 2) -- (8.75, 2);
    \draw (8, 2.125) -- (8, 1.875);
    \draw (8.75, 2.125) -- (8.75, 1.875);
    \draw[->] (8.375, 2) -- (8.375, 1.875);
    \node at (8.375, 1.75) {$m^{8}$};
    \end{scope}
    \end{scope}

    \draw (12.5, 1.25) -- (12.5, 2);
    \draw (12.25, 1.25) -- (12.75, 1.25);
    \draw (12.25, 2) -- (12.75, 2);
    \draw[->] (12.5, 1.625) -- (12.75, 1.625);
    \node at (12.5, 2.2) {$m^{2}$};

    \begin{scope}[xshift=1.5cm]
    \draw (12.5, 1.25) -- (12.5, 2);
    \draw (12.375, 1.25) -- (12.625, 1.25);
    \draw (12.375, 2) -- (12.625, 2);
    \draw[->] (12.5, 1.625) -- (12.75, 1.625);
    \node at (13.125, 1.625) {$m^{9}$};
    \end{scope}

    \begin{scope}[yshift=-1.25cm]
    \begin{scope}[xshift=1.5cm]
    \draw (12.5, 1.25) -- (12.5, 2);
    \draw (12.375, 1.25) -- (12.625, 1.25);
    \draw (12.375, 2) -- (12.625, 2);
    \draw[->] (12.5, 1.625) -- (12.75, 1.625);
    \node at (13.125, 1.625) {$m^{10}$};
    \end{scope}
    \end{scope}

    \draw (12.5, 0) -- (12.5, 0.75);
    \draw (12.25, 0) -- (12.75, 0);
    \draw (12.25, 0.75) -- (12.75, 0.75);
    \draw[->] (12.5, 0.3750) -- (12.75, 0.3750);
    \node at (12.5, -0.2) {$m^{3}$};

    \draw (13, 0) -- (13.75, 0) -- (13.75, 0.75) -- (13, 0.75) -- (13, 0);
    \node at (13.3750, 0.3750) {$e^{2}$};
    \draw (13, 1.25) -- (13.75, 1.25) -- (13.75, 2) -- (13, 2) -- (13, 1.25);
    \node at (13.3750, 1.6250) {$e^{3}$};
    \draw (13, 1) -- (13.75, 1);
    \draw (13, 1.125) -- (13, 0.875);
    \draw (13.75, 1.125) -- (13.75, 0.875);
    \draw[->] (13.375, 1) -- (13.375, 1.125);
    \node at (14.125, 1) {$m^{1}$};
    \end{scope}
    \node at (13.5, 3.5) {split-side mortar};
  \end{tikzpicture}
  \caption{(left) Example of a nonconforming mesh. (center) Example of a mortar
  decomposition of the mesh, where the mortar elements are conforming to the
  larger element across the nonconforming faces; we refer to this type of mortar
  element as a full-side mortar. (right) Example of a mortar decomposition of
  the mesh, where the mortar elements conform to the smaller elements across the
  nonconforming faces; we refer to this type of mortar elements as a
  split-side mortar. (center and right) The arrows on the mortar faces represent
  the direction of the canonical (and arbitrary) mortar element
  normals.}\label{fig:mortars}
\end{figure}
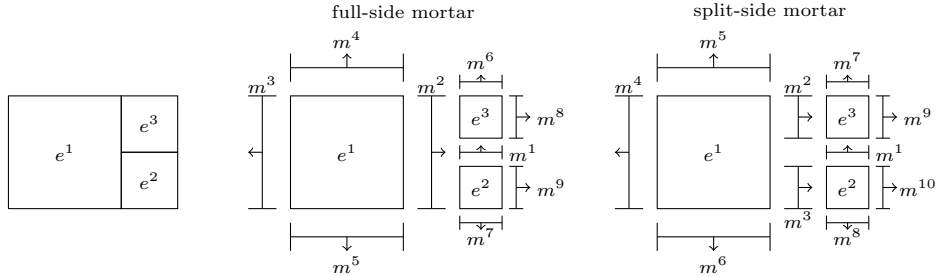
Let $\Gamma = \bigcup_{e\in\EE} \partial e$ where $\partial e$
is the boundary of element $e$; we call $\Gamma$ the mortar and it
contains both the internal mesh interfaces and the outer boundary. The mortar is
partitioned into a finite set of non-overlapping, $(d-1)$-dimensional mortar
elements; the set of mortar elements is denoted by $\MM$. The number of
mortar elements is $|\MM|$ and we require that $\bigcup_{m\in\MM}
m = \Gamma$.
\fref{fig:mortars} contains an example volume mesh and two possible mortar
meshes. The center and right panels show two possible approaches to
partition the internal mortar for the volume mesh in the left panel. When
the mortar elements span the entire nonconforming interface between elements we
call this a \emph{full-side} mortar, and when the mortar elements conform to the
smallest volume faces we call this a \emph{split-side} mortar.

The set $\MM^{e}$ is defined to be the set of mortar elements that volume element
$e \in \EE$ connects to:
\begin{align}
  \MM^{e} =
  \left\{m\in \MM \middle| m \cap \partial e \ne \emptyset \right\}.
\end{align}
Similarly the set $\EE^{m}$ is defined to be the set of volume elements that mortar
element $m \in \MM$ connects to:
\begin{align}
  \EE^{m} =
  \left\{e\in \EE \middle| m \cap \partial e \ne \emptyset \right\}.
\end{align}
It is useful to further partition $\EE^{m}$ into two subsets depending on which
side of the mortar each element resides. To do this, each mortar element $m \in
\MM$ is given a canonical orientation defined by a unit normal (the orientation
of which is arbitrary); the components of the unit normal for
$m$ are denoted $n_{i}^{m}$. In the method developed below, surface integrals
will be performed over the mortar elements and not the faces of the volume
elements, and thus the outward normal to the element may have a different
orientation (sign) than the mortar element normal.
If a volume element $e \in \EE^{m}$ is on the
side of the mortar towards which the normal points the element is said to the be
on the \emph{plus-side} of the mortar, otherwise it is said to be on the
\emph{minus-side} of the mortar. The sets of volume elements on the plus and
minus sides of mortar element $m$ are denoted by $\EE^{+m}$ and
$\EE^{-m}$, respectively, and $\EE^{m} = \EE^{+m} \bigcup \EE^{-m}$.  For the
example mesh shown in the center panel of \fref{fig:mortars} the above-defined
sets are:
\begin{equation}
  \begin{aligned}
  \MM^{e^{1}} &= \left\{m^{2},m^{3},m^{4},m^{5}\right\}, &
  \EE^{m^{1}} &= \left\{e^{2}, e^{3} \right\}, &
  \EE^{m^{2}} &= \left\{e^{1}, e^{2}, e^{3} \right\}, \\
  \MM^{e^{2}} &= \left\{m^{1}, m^{2}, m^{7}, m^{9}\right\}, &
  \EE^{+m^{1}} &= \left\{e^{3}\right\}, &
  \EE^{+m^{2}} &= \left\{e^{2}, e^{3} \right\}, \\
  \MM^{e^{3}} &= \left\{m^{1}, m^{2}, m^{6}, m^{8}\right\}, &
  \EE^{-m^{1}} &= \left\{e^{2} \right\}, &
  \EE^{-m^{2}} &= \left\{e^{1} \right\}.
  \end{aligned}
\end{equation}

Each element $e \in \EE$ is taken to have a reference element $\hat{e}$
where the discretization is specified. It is assume that
there exists a diffeomorphic mapping between the reference and physical elements.
That is, there exist differentiable functions $\vec{x}^{e}$ and $\vec{r}^{e}$
such that if $\vec{r} \in \hat{e}$ then $\vec{x}^{e}(\vec{r}) \in e$ and if
$\vec{x} \in e$ then $\vec{r}^{e}(\vec{x}) \in \hat{e}$. Similarly, for each
mortar element $m \in \MM$ it is assumed that there exists a reference
mortar element $\hat{m}$ along with a diffeomorphic mapping between the
reference and physical mortar elements.  Similar to volume elements, there exist
differentiable functions $\vec{x}^{m}$ and $\vec{r}^{m}$
such that if $\vec{r} \in \hat{m}$ then $\vec{x}^{m}(\vec{r}) \in m$ and if
$\vec{x} \in m$ then $\vec{r}^{m}(\vec{x}) \in \hat{m}$.

For the exact transformation, the Jacobian determinant for volume element $e \in
\EE$ is denoted $J^{e}$. For $d = 2$ the Jacobian determinant is
\begin{align}
  \label{eqn:J2d}
  J^{e} =
  \pd{x^{e}_{1}}{r_{1}}
  \pd{x^{e}_{2}}{r_{2}}
  -
  \pd{x^{e}_{1}}{r_{2}}
  \pd{x^{e}_{2}}{r_{1}},
\end{align}
and for $d = 3$
\begin{align}
  \label{eqn:J3d}
  J^{e} =
  \varepsilon_{ijk}
  \pd{x^{e}_{1}}{r_{i}}
  \pd{x^{e}_{2}}{r_{j}}
  \pd{x^{e}_{3}}{r_{k}},
\end{align}
with $\varepsilon_{ijk}$ being the Levi-Civita permutation symbol
\begin{align}
  \varepsilon_{ijk} =
  \begin{cases}
    +1, & \mbox{\ if\ } ijk \mbox{\ is\ } 123,\;312,\;\mbox{or}\;231,\\
    -1, & \mbox{\ if\ } ijk \mbox{\ is\ } 321,\;132,\;\mbox{or}\;213,\\
    \phantom{-}0, & \mbox{\ otherwise}.
  \end{cases}
\end{align}
The surface Jacobian determinant for mortar
element $m\in\MM$ is $S_{J}^{m}$.  For $d=2$, the surface Jacobian determinant for a mortar
element is
\begin{align}
  S^{m}_{J} = \sqrt{{\left(\fd{x^{m}_{1}}{\xi}\right)}^{2} +
  {\left(\fd{x^{m}_{2}}{\xi}\right)}^{2}},
\end{align}
where the parametric curves $(r^{m}_{1}(\xi), r^{m}_{2}(\xi))$
parameterize the mortar element and
\begin{align}
  \fd{x^{m}_{j}}{\xi} = \pd{x^{m}_{j}}{r_{1}} \fd{r^{m}_{1}}{\xi} + \pd{x^{m}_{j}}{r_{2}} \fd{r^{m}_{2}}{\xi}.
\end{align}
For $d=3$, the surface Jacobian determinant of a mortar element is
\begin{align}
  S_{J}^{m}
  =
  \sqrt{ \varepsilon_{ijk}
         \varepsilon_{inl}
         \pd{x^{m}_{j}}{\xi}
         \pd{x^{m}_{k}}{\eta}
         \pd{x^{m}_{n}}{\xi}
         \pd{x^{m}_{l}}{\eta}
         }
\end{align}
with the mortar element parameterized as $(r^{m}_{1}(\xi, \eta), r^{m}_{2}(\xi,
\eta), r^{m}_{3}(\xi, \eta))$ and
\begin{align}
  \pd{x^{m}_{k}}{\xi } &= \pd{x^{m}_{k}}{r_{i}} \pd{r^{m}_{i}}{\xi },&
  \pd{x^{m}_{k}}{\eta} &= \pd{x^{m}_{k}}{r_{i}} \pd{r^{m}_{i}}{\eta}.
\end{align}

In \aref{app:div} we discuss how we numerically evaluate~\eref{eqn:J3d} as well
as the other metric relations in order to ensure the scheme is conservative and
constant preserving; see also Kopriva~\cite{Kopriva2006jsc} for a discussion
concerning conforming meshes.

\subsection{Function Spaces}
The finite dimensional approximation space for $e \in \EE$ is defined on
the reference element $\hat{e}$, and is denoted by $\hat{\VV}^{e} \subset
\text{L}^{2}\left(\hat{e}\right)$ and has dimension $\dim \hat{\VV}^{e}$. For the
numerical results \sref{sec:results}, tensor product polynomials of degree
at most $N$ are used:
\begin{align}
  \hat{\QQ}^{N,d} := \left\{\prod_{i=1}^{d} r_{i}^{n_{i}}
  \;\middle|\;
  0\le n_{i} \le N, \ \forall i \in [1,d] \right\}.
\end{align}
A corresponding space $\VV^{e}$ for the physical element can be defined
as the space of all functions $q^{e}$ such that $q^{e}(\vec{x}) =
q^{e}(\vec{r}^{e}(\vec{x}))$ for some $q^{e} \in \hat{\VV}^{e}$.
Similar definitions are used for each mortar element $m \in
\MM$, with $\hat{\UU}^{m} \subset
\text{L}^{2}\left(\hat{m}\right)$ being the finite dimensional space defined
on the reference element $\hat{m}$ with dimension $\dim\hat{\UU}^{m}$; in the
results $\hat{\UU}^{m} = \hat{\QQ}^{N,d-1}$ where $N$ is the same as in the
volume approximation.

The operator $\mathcal{P}^{m,e}:  \hat{\VV}^{e} \rightarrow \hat{\UU}^{m}$ is
taken to be an operator with the property that if $q^{e} \in \hat{\VV}^{e}$ then
$\mathcal{P}^{m,e}{q^{e}} \in \hat{\UU}^{m}$. Though not required for
stability, the most natural way to define this operator is as an
L$^{2}$-projection. Namely if $q^{e} \in \hat{\VV}^{e}$ and $m \in \MM^{e}$,
then $\mathcal{P}^{m,e}{q^{e}}$ is constructed so that for all $\phi^{m}
\in \hat{\UU}^m$
\begin{align}
  \int_{\hat{m}} \phi^{m} \mathcal{P}^{m,e} q^{e} = \int_{\hat{m}^{e}}
  \phi^{m} q^{e},
\end{align}
where $\hat{m}^{e}$ is the portion of the reference mortar element $\hat{m}$
that corresponds to the intersection in physical space of $m$ and $\partial e$;
see \aref{app:project} for more details on the construction of
$\mathcal{P}^{m,e}$. Note that when the $m \cap \partial e = m$ the operator
$\mathcal{P}^{m,e}$ is an interpolation operator from the volume element to the
mortar element (assuming that the order of functions on the mortar is greater
than or equal to the volume element faces). For instance, if the mortar elements
between nonconforming elements were as shown in the center panel of
\fref{fig:mortars}, then $\mathcal{P}^{m^2,e^2}$ and $\mathcal{P}^{m^2,e^3}$
would be components of the L$^{2}$-projection operator between from $e^{2}$ and
$e^{3}$ to $m^{2}$, and $\mathcal{P}^{m^2,e^1}$ would be an interpolation
operator.  On the other hand, if the mortar elements between nonconforming
elements were as shown in the right panel of \fref{fig:mortars}, then
$\mathcal{P}^{m^2,e^1}$, $\mathcal{P}^{m^2,e^3}$, $\mathcal{P}^{m^3,e^1}$, and
$\mathcal{P}^{m^3,e^2}$ would all be interpolation operators.

If $\zeta^{e}_{n} \in \hat{\VV}^{e}$ for $n = 1, 2, \dots,
\dim\hat{\VV}^{e}$ are
linearly independent basis functions for $\hat{\VV}^{e}$, then
$q^{e} \in \hat{\VV}^{e}$ can be written as
\begin{align}
  \label{eqn:dof:basis}
  q^{e} &= \sum_{n=1}^{\dim\hat{\VV}^{e}} \mathrm{q}^{e}_{n} \zeta^{e}_{n},
\end{align}
where $\mathrm{q}_{n}^{e}$ are the scalar degree of freedom and are stored as the
vector
\begin{align}
  \vec{q}^{e} &=
  \begin{bmatrix}
    \mathrm{q}_{1}^{e} \\ \mathrm{q}_{2}^{e} \\ \vdots \\
    \mathrm{q}_{\dim\hat{\VV}^{e}}^{e}
  \end{bmatrix}.
\end{align}
Similar notation is used to represent functions on the reference mortar element
$\hat{m}$ with the vector $\vec{q}^{m}$ being the $\dim\hat{\UU}^{m}$ degrees
of freedom representing $q^{m} \in \hat{\UU}^{m}$.

\subsection{Quadrature}\label{sec:notation:quad}
To allow for a more general formulation, we allow the exact geometry
transformations to be approximated, such as by an isoparametric geometry
approximation, which leads to the definition of \emph{approximate} physical
elements.  Namely, it is assumed that there exist approximate
transformations $\vec{r}^{e}_{h}$ and $\vec{x}^{e}_{h}$ that transform between the
reference element $\hat{e}$ and an approximate physical element $e_{h} =
\vec{r}^{e}_{h}\left(\hat{e}\right)$ where $e_{h} \approx e$;
approximation of the mortar element transforms leads to approximate mortar
elements $m_{h} = r^{e}_{h}(\hat{m}) \approx m$.  The introduction of
the approximate physical elements $e_{h}$ means that it is possible that
$\bigcup_{e_{h}\in\EE_{h}}e_{h} = \Omega_{h} \ne \Omega$.  Similarly,
depending on how the approximate transformations are defined, the
computational mesh may now have gaps and overlap between neighboring
elements.\footnote{In principle one could introduce approximations of the
reference elements as well so that $\hat{e} \ne \hat{e}_{h}$, but since many
methods are specified using straight-sided reference elements this is not
considered here.} As noted above, allowing for gaps and overlaps has an impact
on the conservation and constant preserving properties of the method and this
will be addressed in more detail in~\sref{sec:constant}.

The $\kappa^{e}$-weighted inner product over $e\in\EE$ between $p^{e}
\in \hat{\VV}^{e}$ and $q^{e} \in \hat{\VV}^{e}$ is approximated as
\begin{align}
  \int_{\hat{e}}\kappa^{e} J^{e} p^{e} \; q^{e} \approx
  {\left(\vec{p}^{e}\right)}^{T}\mat{M}_{\kappa}^{e}\vec{q}^{e},
\end{align}
where $\mat{M}_{\kappa}^{e}$ is a symmetric matrix. If $\kappa^{e} > 0$ then it is
assumed that $\mat{M}_{\kappa}^{e}$ is positive definite. This positive definite
assumption on the mass matrix $\mat{M}^{e}_{\kappa}$ does not require any
particular assumptions concerning the approximation of the geometry
transformation. For example, the Jacobian determinant could be computed using the
exact transformation~\eref{eqn:J3d} or computed to respect the metric
identities~\cite{Kopriva2006jsc}.

If $S_{ijkl}$ is a component of a positive-definite, fourth-order tensor then it
is not required that $\mat{M}_{S_{ijkl}}^{e}$ be positive definite (since any
individual component of a positive-definite tensor need not be positive). That
said, it is required that
${\left(\vec{s}^{e}\right)}_{ij}^{T}\mat{M}_{S_{ijkl}}^{e}\vec{s}_{kl}^{e}
\ge 0$ for all symmetric second-order tensors whose components
$s_{ij}^{e}\in\hat{\VV}^{e}$ satisfy $s_{ij}^{e} = s_{ji}^{e}$. Defining
\begin{align}
  \label{eqn:mass:stress}
  \mat{\bar{M}}^{e}_{S}
  =
  \begin{bmatrix}
      \mat{M}^{e}_{S_{1111}} &   \mat{M}^{e}_{S_{1122}} &   \mat{M}^{e}_{S_{1133}} & 2 \mat{M}^{e}_{S_{1123}} & 2 \mat{M}^{e}_{S_{1113}} & 2 \mat{M}^{e}_{S_{1112}} \\
      \mat{M}^{e}_{S_{1122}} &   \mat{M}^{e}_{S_{2222}} &   \mat{M}^{e}_{S_{2233}} & 2 \mat{M}^{e}_{S_{2223}} & 2 \mat{M}^{e}_{S_{2213}} & 2 \mat{M}^{e}_{S_{2212}} \\
      \mat{M}^{e}_{S_{1133}} &   \mat{M}^{e}_{S_{2233}} &   \mat{M}^{e}_{S_{3333}} & 2 \mat{M}^{e}_{S_{3323}} & 2 \mat{M}^{e}_{S_{3313}} & 2 \mat{M}^{e}_{S_{3312}} \\
    2 \mat{M}^{e}_{S_{1123}} & 2 \mat{M}^{e}_{S_{2223}} & 2 \mat{M}^{e}_{S_{3323}} & 4 \mat{M}^{e}_{S_{2323}} & 4 \mat{M}^{e}_{S_{2313}} & 4 \mat{M}^{e}_{S_{2312}} \\
    2 \mat{M}^{e}_{S_{1113}} & 2 \mat{M}^{e}_{S_{2213}} & 2 \mat{M}^{e}_{S_{3313}} & 4 \mat{M}^{e}_{S_{2313}} & 4 \mat{M}^{e}_{S_{1313}} & 4 \mat{M}^{e}_{S_{1312}} \\
    2 \mat{M}^{e}_{S_{1112}} & 2 \mat{M}^{e}_{S_{2212}} & 2 \mat{M}^{e}_{S_{3312}} & 4 \mat{M}^{e}_{S_{2312}} & 4 \mat{M}^{e}_{S_{1312}} & 4 \mat{M}^{e}_{S_{1212}}
  \end{bmatrix},
\end{align}
the above restriction on $\mat{M}_{S_{ijkl}}^{e}$ can be restated as requiring
that $\mat{\bar{M}}^{e}_{S}$ be symmetric, positive definite.

Integrals involving spatial derivatives of the solution are approximated as
\begin{align}
  \int_{\hat{e}}J^{e} p^{e} \; \pd{q^{e}}{x_{j}}
  =
  \int_{\hat{e}}J^{e} p^{e} \; \pd{r_{k}^{e}}{x_{j}}\pd{q^{e}}{r_{k}}
  \approx
  {\left(\vec{p}^{e}\right)}^{T}\mat{S}_{j}^{e}\vec{q}^{e},
\end{align}
where we highlight the fact that the stiffness matrix $\mat{S}_{j}^{e}$ contains the
metric terms. One feature of our discretization is that no
summation-by-parts~\cite{KreissScherer1974} property between stiffness and mass
matrices is required for the scheme to be stable. In the results
\sref{sec:results} we use tensor product hexahedral elements, with collocation
of the integration and interpolation points, and the stiffness matrix
$\mat{S}_{j}^{e}$ is taken to be of the form
\begin{align}
  \mat{S}_{j}^{e} &= \mat{M}^{e}
  \mat{J}^{e}\mat{r}_{k,j}^{e}\mat{D}_{k}^{e},
\end{align}
where $\mat{M}^{e}$ is the diagonal matrix of tensor product quadrature weights,
$\mat{J}^{e}$ and $\mat{r}_{k,j}^{e}$ are diagonal matrices of the
approximations of $J^{e}$ and $\partial r_{k}^{e}/\partial x_{j}$, respectively,
at the nodes, and $\mat{D}_{k}^{e}$ is the differentiation matrix
with respect to the reference direction $r_{k}$.

Surface integrals over $m \in \MM$ are assumed to be
approximated using a primitive, positive weight, $n_{q}^{m}$-point quadrature
rule defined for the reference element $\hat{m}$. Thus, if $\omega_{n}^{m} > 0$
are the weights and $\vec{r}_{n}^{m}$ are the nodes of the rule (with
$n=1,2,\dots,n_{q}^{m}$) then $\mat{W}^{m}$ is the
diagonal matrix of quadrature weights and surface Jacobian determinant evaluated at the
quadrature nodes.  Namely, the diagonal elements of the matrix are
\begin{align}
  \mat{W}^{m}_{nn} &= \omega_{n}^{m} S_{J}^{m}\left(\vec{r}_{n}^{m}\right)&
  \mbox{(no summation over $n$)}.
\end{align}
If $\mat{L}^{m}$ is the interpolation matrix that goes from the degrees
of freedom of $\hat{\UU}^{m}$ to values at the quadrature nodes, then
inner products over the mortar $m \in \MM$ between $p^{m} \in
\hat{\UU}^{m}$ and $q^{m} \in \hat{\UU}^{m}$ are approximated as
\begin{align}
  \int_{\hat{m}} S_{J}^{m} p^{m} \; q^{m} \approx
  {\left(\vec{p}^{m}\right)}^{T}
  {\left(\mat{L}^{m}\right)}^{T} \mat{W}^{m} \mat{L}^{m}
  \vec{q}^{m}.
\end{align}
If the intersection of the boundary of volume element $e \in \EE$ and $m \in
\MM$ in non-zero, that is $\partial e \cap m \ne \emptyset$, then integrals
between $p^{m} \in \hat{\UU}^{m}$ and $q^{e} \in \hat{\VV}^{e}$ are
approximated as
\begin{align}
  \int_{\hat{m}} S_{J}^{m}\;p^{m}\;\mathcal{P}^{m,e}q^{e}
  \approx
  {\left(\vec{p}^{m}\right)}^{T}
  {\left(\mat{L}^{m}\right)}^{T} \mat{W}^{m} \mat{P}^{m,e}
  \vec{q}^{e}
  ,
\end{align}
where we note that $\mat{P}^{m,e}$ goes directly from the volume element to the
quadrature nodes, and thus includes both the projection operator
$\mathcal{P}^{m,e}$ (or its approximation) and the interpolation matrix
$\mat{L}^{m}$.
\section{Discontinuous Galerkin Method}
\subsection{Exact Integration}
A skew symmetric, discontinuous Galerkin formulation based on~\eref{eqn:continuous:pde} is:  For each $e \in \EE$,
find $\mathrm{v}_{j}^{e} \in \hat{\VV}^{e}$ and symmetric $\sigma_{ij}^{e} =
\sigma_{ji}^{e} \in \hat{\VV}^{e}$ such that for all $\mathrm{\phi}_{j}^{e} \in
\hat{\VV}^{e}$ and symmetric $\psi_{ij}^{e} = \psi_{ji}^{e} \in \hat{\VV}^{e}$
the following holds:
\begin{align}
  \label{eqn:dg:vel}
  \int_{\hat{e}}
  J^{e} \rho^{e}\; \phi_{i}^{e} \pd{v_{i}^{e}}{t}
  =&
  -
  \int_{\hat{e}}
  J^{e}
  \pd{\phi_{i}^{e}}{x_{j}}\sigma_{ij}^{e}
  +
  \sum_{m \in \MM^{e}}
  \int_{\hat{m}} S_{J}^{m}\phi_{i}^{m,e}T_{i}^{*m[e]},\\
  \label{eqn:dg:stress}
  \int_{\hat{e}}
  J^{e}
  \psi_{ij}^{e} S_{ijkl}^{e} \pd{\sigma_{kl}^{e}}{t}
  =&
  \int_{\hat{e}}
  \frac{J^{e}}{2} \psi_{ij}^{e} \left(\pd{v_{i}^{e}}{x_{j}} + \pd{v_{j}^{e}}{x_{i}}\right)
  \\\notag
  &+
  \sum_{m \in \MM^{e}}
  \int_{\hat{m}} S_{J}^{m} {\left(n_{j} \psi_{ij}\right)}^{m,e}
  \left(v_{i}^{*m} - v_{i}^{m[e]}\right).
\end{align}
Here the mortar projected \emph{test} velocity from element $e$ is defined as
$\phi_{i}^{m,e} = \mathcal{P}^{m,e}\phi_{i}$. The mortar projected \emph{test}
traction ${\left(n_{j} \psi_{ij}\right)}^{m,e}$ can be defined by
projecting the element face computed \emph{test} traction to the mortar,
${\left(n_{j} \psi_{ij}\right)}^{m,e} = \mathcal{P}^{m,e} n_{j}^{e}\psi_{ij}$
where $n_{j}^{e}$ is the outward normal for element $e$, or by projecting the
\emph{test} stress tensor to the mortar and then computing the \emph{test}
traction on the mortar, ${\left(n_{j} \psi_{ij}\right)}^{m,e} = n_{j}^{m,e}
\mathcal{P}^{m,e} \psi_{ij}$ where $n_{j}^{m,e}$ is the mortar normal that is
oriented outward to element $e$. Both approaches will result in a stable
numerical method, and in the results section the latter approach is used
(projecting the stresses).
The quantity $v_{i}^{m[e]} = v_{i}^{+m}$ if $e \in \EE^{+m}$ and
$v_{i}^{m[e]} = v_{i}^{-m}$ if $e \in \EE^{-m}$ where
\begin{align}
  \label{eqn:mortar:state:velocity}
  v_{i}^{\pm m}
  &=
  \sum_{e \in \EE^{\pm m}}
  \mathcal{P}^{m,e}
  v_{i}^{e},
\end{align}
are the velocities on the plus and minus side of the mortar.
The vectors $T_{i}^{*m[e]}$ and $v_{i}^{*m}$ are the numerical fluxes which
enforce continuity of traction and velocity across the mortar elements and the
physical boundary conditions. The choice of the numerical flux is critical to
ensure the consistency and stability of the method (as discussed below).  The
superscript $[e]$ in $T_{i}^{*m[e]}$ denotes the fact that this traction is
defined with respect the normal of element $e$. Namely, if elements $e^{1}$ and
$e^{2}$ are both connected to mortar $m$ then $T_{i}^{*m[e^{1}]} =
T_{i}^{*m[e^{2}]}$ if both elements are on the same side of the mortar element
and $T_{i}^{*m[e^{1}]} = -T_{i}^{*m[e^{2}]}$ if they are on opposite sides (due
to the equal but opposite normal vector).

An important feature of~\eref{eqn:dg:vel}--\eref{eqn:dg:stress} is that the
surface integrals are evaluated on the mortar elements, not the volume element
faces. This structure essentially decouples the volume and surface stability,
leading to a semi-discretely stable scheme even when quadrature (or
under-integration) is used.

The energy in element $e$ is defined as
\begin{align}
  \mathcal{E}^{e} &=
  \int_{\hat{e}}
  J^{e}
  \left(
  \frac{\rho}{2} v_{i}^{e}v_{i}^{e}
  +
  \frac{1}{2}
  \sigma_{ij}^{e}
  S_{ijkl}^{e}
  \sigma_{kl}^{e}
  \right),
\end{align}
with $\mathcal{E} = \sum_{e\in\EE}\mathcal{E}_{e}$ being the energy in the
entire domain; see~\eref{eqn:energy}. Since the continuous problem does not
support energy growth, it is desirable that the semi-discrete problem mimic
this property, e.g., $d\mathcal{E}/dt \le 0$. For each element, the energy rate
of change is
\begin{align}
  \label{eqn:elm:eng}
  \fd{\mathcal{E}^{e}}{t}
  &=
  \sum_{m \in \MM^{e}}
  \fd{\mathcal{E}^{m,e}}{t},\\
  \fd{\mathcal{E}^{m,e}}{t} =
  &
  \int_{\hat{m}}
  S_{J}^{m}
  \left(
  v_{i}^{m,e}T_{i}^{*m[e]} + v_{i}^{*m}T_{i}^{m,e} - v_{i}^{m[e]}T_{i}^{m,e}
  \right),
\end{align}
where $d\mathcal{E}^{m,e}/dt$ is the contribution to the energy rate of change
for mortar element $m \in \MM^{e}$ that comes from element $e$,
the mortar projected velocity from element $e$ is $v_{i}^{m,e} =
\mathcal{P}^{m,e}v_{i}^{e}$, and the mortar projected traction vector
$T_{i}^{m,e} = {\left(n_{j} \sigma_{ij}\right)}^{m,e}$ is either the projection
of the tractions to the mortar or the tractions defined from the projected
stresses; see discussion following~\eref{eqn:dg:stress}.

To complete the energy estimate, a single mortar element $m \in \MM$ is
considered and the contributions from all elements $e \in \EE^{m}$
are summed:
\begin{align}
  \fd{\mathcal{E}^{m}}{t}
  =&
  \sum_{e \in \EE^{m}}
  \fd{\mathcal{E}^{m,e}}{t}.
\end{align}
Using the~\eref{eqn:mortar:state:velocity} along with the mortar plus and minus
tractions
\begin{align}
  T_{i}^{\pm m}
  &=
  \mp \sum_{e \in \EE^{\pm m}}
  T_{i}^{m,e},
\end{align}
the mortar element energy rate of change can be written as
\begin{align}
  \label{eqn:mortar:energy}
  \fd{\mathcal{E}^{m}}{t}
  =&
  \int_{\hat{m}}
  S_{J}^{m}
  \left(
  \left(v_{i}^{-m}-v_{i}^{+m}\right)T_{i}^{*m} +
  v_{i}^{*m}\left(T_{i}^{-m}-T_{i}^{+m}\right)\right.\\
  &\notag
  \qquad
  \qquad
  \quad
  \left.- v_{i}^{-m}T_{i}^{-m}
  + v_{i}^{+m}T_{i}^{+m}
  \right).
\end{align}
Here, the traction component of the numerical flux $T^{*m}_{i} = T^{*m[e]}$ if
$e\in\EE^{-m}$ and $T^{*m}_{i} = -T^{*m[e]}$ if $e\in\EE^{+m}$.  For mortar
elements on the physical boundary, the energy rate of change is
\begin{align}
  \label{eqn:mortar:energy:bc}
  \fd{\mathcal{E}^{m}}{t}
  =&
  \int_{\hat{m}}
  S_{J}^{m}
  \left(
  v_{i}^{-m}T_{i}^{*m} +
  v_{i}^{*m}T_{i}^{-m}
  - v_{i}^{-m}T_{i}^{-m}
  \right).
\end{align}

If the numerical flux is defined such that the integrands of~\eref{eqn:mortar:energy} and~\eref{eqn:mortar:energy:bc} are non-positive for
all $v_{i}^{\pm m}$ and $T_{i}^{\pm m}$, then the following theorem results.
\begin{theorem}\label{thm:exact:stable}
  If there exists a numerical flux such that the integrands of
 ~\eref{eqn:mortar:energy} and~\eref{eqn:mortar:energy:bc} are non-positive,
  then discontinuous Galerkin method~\eref{eqn:dg:vel}--\eref{eqn:dg:stress}
  satisfies the energy estimate $\mathcal{E}(t) \le \mathcal{E}(0)$ and is
  energy stable.
\end{theorem}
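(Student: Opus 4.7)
The plan is to leverage what the preceding computation has already assembled. The bulk of the work is done in deriving \eref{eqn:mortar:energy} and \eref{eqn:mortar:energy:bc}; all that remains is to assemble the element-level energy identity, reorder the sum over volume elements into a sum over mortar elements, invoke the hypothesis on the numerical flux, and integrate in time. So the proof should be essentially a one-line invocation of the hypothesis, preceded by a short justification that the element-level terms collapse into those mortar integrals.

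First I would show that for each element $e \in \EE$ the time derivative of $\mathcal{E}^{e}$ equals the sum \eref{eqn:elm:eng} of mortar contributions. This is obtained by choosing $\phi_{i}^{e} = v_{i}^{e}$ in \eref{eqn:dg:vel} and $\psi_{ij}^{e} = \sigma_{ij}^{e}$ in \eref{eqn:dg:stress} and adding them. The left-hand sides produce $\fd{\mathcal{E}^{e}}{t}$ after applying the product rule in time and using the symmetry $S_{ijkl} = S_{klij}$ of the compliance tensor. On the right-hand side, the volume integral $-\int_{\hat{e}} J^{e}(\pd{v_{i}^{e}}{x_{j}})\sigma_{ij}^{e}$ from the velocity equation cancels the volume integral $\int_{\hat{e}} \tfrac{J^{e}}{2}\sigma_{ij}^{e}(\pd{v_{i}^{e}}{x_{j}} + \pd{v_{j}^{e}}{x_{i}})$ from the stress equation, using only the pointwise symmetry $\sigma_{ij}^{e} = \sigma_{ji}^{e}$. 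This is precisely where the skew-symmetric form of the equations pays off: the cancellation is algebraic and requires no discrete integration-by-parts property between $\mat{S}_{j}^{e}$ and $\mat{M}^{e}$.

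Next I would sum over $e \in \EE$ and interchange the order of summation via $\sum_{e \in \EE}\sum_{m \in \MM^{e}} = \sum_{m \in \MM}\sum_{e \in \EE^{m}}$ to obtain $\fd{\mathcal{E}}{t} = \sum_{m \in \MM}\fd{\mathcal{E}^{m}}{t}$, with $\fd{\mathcal{E}^{m}}{t}$ given by \eref{eqn:mortar:energy} for interior mortars and by \eref{eqn:mortar:energy:bc} for boundary mortars. Here I would take care to track the sign conventions: the identities $v_{i}^{m[e]} = v_{i}^{\pm m}$ for $e \in \EE^{\pm m}$, $T_{i}^{\pm m} = \mp \sum_{e \in \EE^{\pm m}} T_{i}^{m,e}$, and the relation between $T_{i}^{*m[e]}$ and $T_{i}^{*m}$ are precisely what is needed to collapse the per-element surface contributions into the symmetric mortar form already derived.

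Finally, under the stated hypothesis on the numerical flux, the integrands of \eref{eqn:mortar:energy} and \eref{eqn:mortar:energy:bc} are non-positive pointwise on $\hat{m}$; since $S_{J}^{m} \ge 0$ the mortar-wise rates satisfy $\fd{\mathcal{E}^{m}}{t} \le 0$, and summing over $m \in \MM$ yields $\fd{\mathcal{E}}{t} \le 0$. Integrating from $0$ to $t$ gives $\mathcal{E}(t) \le \mathcal{E}(0)$. The only genuinely delicate step is the second one: the reordering and the sign bookkeeping for the $\pm$-side tractions and velocities. Everything else is bookkeeping or has been done already in the derivation preceding the theorem.
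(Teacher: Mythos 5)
Your proposal is correct and follows essentially the same route as the paper: the element-level energy identity (test functions $\phi_i^e = v_i^e$, $\psi_{ij}^e = \sigma_{ij}^e$, with the volume terms cancelling by the pointwise symmetry of $\sigma_{ij}^e$), the reordering $\sum_{e\in\EE}\sum_{m\in\MM^{e}} = \sum_{m\in\MM}\sum_{e\in\EE^{m}}$, and integration in time are exactly the steps the paper carries out in the text preceding the theorem, so that its stated proof is the one-line invocation you anticipate.
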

\begin{proof}
  Taking the derivative of the energy gives,
  \begin{align}
    \fd{\mathcal{E}}{t} = \sum_{e\in\EE} \fd{\mathcal{E}^{e}}{t}
    = \sum_{m \in \MM} \fd{\mathcal{E}^{m}}{t}.
  \end{align}
  If across every face the numerical fluxes have the property that
 ~\eref{eqn:mortar:energy} and~\eref{eqn:mortar:energy:bc} are non-positive it
  follows that
  \begin{align}
    \fd{\mathcal{E}}{t} \le 0,
  \end{align}
  and $\mathcal{E}(t) \le \mathcal{E}(0)$ results upon integration.
  \qed{}
\end{proof}

\subsection{Numerical Flux for Isotropic Elasticity}
The critical question then becomes: can numerical fluxes be defined so
that~\eref{eqn:mortar:energy} and~\eref{eqn:mortar:energy:bc} are non-positive?
For isotropic elasticity, one approach is to use a flux defined as
\begin{align}
  \label{eqn:flux:full}
  T_{i}^{*m} &= n^{m}_{i} T^{*m}_{\parallel} + T^{*m}_{i\bot},&
  v_{i}^{*m} &= n^{m}_{i} v^{*m}_{\parallel} + v^{*m}_{i\bot},
\end{align}
where $T^{*m}_{\parallel}$ and $v^{*m}_{\parallel}$ are the mortar parallel
traction and velocity, and $T^{*m}_{i\bot}$ and $v^{*m}_{i\bot}$ are the
mortar perpendicular components. To define these terms, it is necessary to first
define the parallel and perpendicular plus and minus states:
\begin{align}
  T_{\parallel}^{\pm m} &= n_{i} T_{i}^{\pm m}, &
  T_{i\bot}^{\pm m} &= T_{i}^{\pm m}-n_{i} T_{\parallel}^{\pm m}, \\
  v_{\parallel}^{\pm m} &= n_{i} v_{i}^{\pm m},&
  v_{i\bot}^{\pm m} &= v_{i}^{\pm m}-n_{i} v_{\parallel}^{\pm m}.
\end{align}
With this, the terms in flux~\eref{eqn:flux:full} can be defined as
\begin{align}
  \label{eqn:flux:start}
  T_{\parallel}^{*m} &=
  k_{p}^{m}
  \left(
    Z_{p}^{+m} T_{\parallel}^{-m} + Z_{p}^{-m} T_{\parallel}^{+m} -
    \alpha Z_{p}^{-m}Z_{p}^{+m}\left(v_{\parallel}^{-m} - v_{\parallel}^{+m}\right)
    \right)
  \\
  v_{\parallel}^{* m} &=
  k_{p}^{m}
  \left(
    Z_{p}^{-m} v_{\parallel}^{-m} + Z_{p}^{+m} v_{\parallel}^{+m}
    - \alpha \left(T_{\parallel}^{-m} - T_{\parallel}^{+m}\right)
    \right),
  \\
  T_{i\bot}^{*m} &=
  k_{s}^{m}
  \left(
    Z_{s}^{+m} T_{i\bot}^{-m} + Z_{s}^{-m} T_{i\bot}^{+m} -
    \alpha Z_{s}^{-m}Z_{s}^{+m}\left(v_{i\bot}^{-m} - v_{i\bot}^{+m}\right)
  \right),
  \\
  \label{eqn:flux:end}
  v_{i\bot}^{* m} &=
  k_{s}^{m}
  \left(
    Z_{s}^{-m} v_{i\bot}^{-m} + Z_{s}^{+m} v_{i\bot}^{+m}
    - \alpha \left(T_{i\bot}^{-m} - T_{i\bot}^{+m}\right)
  \right),
\end{align}
with the material properties entering the flux definition through the following
relationships
\begin{align}
  \label{eqn:flux:material}
  Z_{s}^{\pm m} &= \sqrt{\rho^{\pm m} \mu^{\pm m}}, &
  Z_{p}^{\pm m} &= \sqrt{\rho^{\pm m} \left(\lambda^{\pm m} + 2 \mu^{\pm m}\right)},\\
  k_{s}^{m} &= \frac{1}{Z_{s}^{-m} + Z_{s}^{+m}}, &
  k_{p}^{m} &= \frac{1}{Z_{p}^{-m} + Z_{p}^{+m}}.
\end{align}
Here the parameter $\alpha \ge 0$ controls the amount of dissipation that occurs
through the flux, with $\alpha = 1$ being the upwind
flux~\cite{WilcoxStadlerBursteddeEtAl2010} and $\alpha=0$ being a central-like
flux which results in no-energy dissipation across the interface. To enforce the
physical boundary condition $T_{i} = 0$ we set $v_{i}^{+m} = v_{i}^{-m}$,
$T_{i}^{+m} = -T_{i}^{-m}$, $Z_{s}^{+m} = Z_{s}^{-m}$, and $Z_{p}^{+m} =
Z_{p}^{-m}$ which leads to
\begin{align}
  T_{\parallel}^{*m} &= 0,
  &
  v_{\parallel}^{* m} &= v_{\parallel}^{-m} - \alpha \frac{T_{\parallel}^{-m}}{ Z_{p}^{-m}},
  &
  T_{i\bot}^{*m} &= 0,
  &
  v_{i\bot}^{* m} &= v_{i\bot}^{-m} - \alpha \frac{T_{i\bot}^{-m}}{Z_{s}^{-m}}.
\end{align}

To see that~\eref{eqn:flux:full} results in a stable flux, first consider the
interior mortar rate of energy change integral~\eref{eqn:mortar:energy}.
Rewriting the integrand in terms of the parallel and perpendicular components
gives
\begin{align}
  \notag
  &\left(v_{i}^{-m}-v_{i}^{+m}\right)T_{i}^{*m} +
  v_{i}^{*m}\left(T_{i}^{-m}-T_{i}^{+m}\right)
  - v_{i}^{-m}T_{i}^{-m}
  + v_{i}^{+m}T_{i}^{+m}\\
  &\qquad=
  \left(v_{\parallel}^{-m}-v_{\parallel}^{+m}\right)T_{\parallel}^{*m} +
  v_{\parallel}^{*m}\left(T_{\parallel}^{-m}-T_{\parallel}^{+m}\right)
  - v_{\parallel}^{-m}T_{\parallel}^{-m}
  + v_{\parallel}^{+m}T_{\parallel}^{+m}\\
  \notag
  &\qquad\quad+
  \left(v_{i\bot}^{-m}-v_{i\bot}^{+m}\right)T_{i\bot}^{*m} +
  v_{i\bot}^{*m}\left(T_{i\bot}^{-m}-T_{i\bot}^{+m}\right)
  - v_{i\bot}^{-m}T_{i\bot}^{-m}
  + v_{i\bot}^{+m}T_{i\bot}^{+m},
\end{align}
where it has been used that, by definition, $v_{i\bot}^{\pm m}$, $T_{i\bot}^{\pm
m}$, $v_{i\bot}^{*m}$, and $T_{i\bot}^{*m}$ are orthogonal to $n_{i}^{m}$.
Substituting in the numerical flux expressions~\eref{eqn:flux:start}--\eref{eqn:flux:end} gives
\begin{align}
  \notag
  &\left(v_{i}^{-m}-v_{i}^{+m}\right)T_{i}^{*m} +
  v_{i}^{*m}\left(T_{i}^{-m}-T_{i}^{+m}\right)
  - v_{i}^{-m}T_{i}^{-m}
  + v_{i}^{+m}T_{i}^{+m}\\
  &\qquad=
  -\alpha \; k_{p}^{m}
  \left(
  Z_{p}^{-m} Z_{p}^{+m}
  {\left(v_{\parallel}^{-m}-v_{\parallel}^{+m}\right)}^{2}
  +
  {\left(T_{\parallel}^{-m}-T_{\parallel}^{+m}\right)}^{2}\right)\\
  \notag
  &\qquad\quad-
  \sum_{i=1}^{3}
  \alpha \; k_{s}^{m}
  \left(
  Z_{s}^{-m} Z_{s}^{+m}
  {\left(v_{i\bot}^{-m}-v_{i\bot}^{+m}\right)}^{2}
  +
  {\left(T_{i\bot}^{-m}-T_{i\bot}^{+m}\right)}^{2}\right).
\end{align}
A similar calculation for the boundary mortar elements, gives that the integrand
of~\eref{eqn:mortar:energy:bc} is
\begin{align}
  v_{i}^{-m}T_{i}^{*m} + v_{i}^{*m}T_{i}^{-m} - v_{i}^{-m}T_{i}^{-m}
  =
  -\alpha \; \frac{{\left(T^{-m}_{\parallel}\right)}^{2}}{Z_{p}^{-m}}
  -\sum_{i=1}^{3}\alpha \; \frac{{\left(T^{-m}_{i\bot}\right)}^{2}}{Z_{s}^{-m}}.
\end{align}
Thus for both~\eref{eqn:mortar:energy} and~\eref{eqn:mortar:energy:bc} the
integrand is non-positive if $\alpha \ge 0$, and the flux yields a stable
scheme by Theorem~\ref{thm:exact:stable}.

\begin{corollary}
  For a variable coefficient, isotropic material the skew-symmetric scheme~\eref{eqn:dg:vel}--\eref{eqn:dg:stress} using numerical flux~\eref{eqn:flux:full} with~\eref{eqn:flux:start}--\eref{eqn:flux:end} leads to
  a semi-discrete scheme that satisfies $\mathcal{E}(t) \le \mathcal{E}(0)$.
\end{corollary}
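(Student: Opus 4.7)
The plan is to derive the corollary as a direct consequence of Theorem~\ref{thm:exact:stable} together with the flux algebra already assembled in the immediately preceding subsection. Theorem~\ref{thm:exact:stable} reduces the question of energy stability for the full semi-discrete scheme to a pointwise condition on each mortar element: namely, that the integrands of \eref{eqn:mortar:energy} (interior mortars) and \eref{eqn:mortar:energy:bc} (boundary mortars) are non-positive for all admissible plus/minus states. So the entire proof amounts to verifying this pointwise condition with the isotropic flux \eref{eqn:flux:full} and \eref{eqn:flux:start}--\eref{eqn:flux:end}, then invoking Theorem~\ref{thm:exact:stable}.

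First I would record the orthogonal decomposition of each mortar vector field into its $n_i^m$-parallel and $n_i^m$-perpendicular parts, as is done in the text. This decomposition is orthogonal with respect to the Euclidean inner product, so each of the integrands in \eref{eqn:mortar:energy} and \eref{eqn:mortar:energy:bc} splits cleanly into a parallel contribution plus a perpendicular contribution; the cross terms vanish because $v_{i\bot}^{\pm m}$, $T_{i\bot}^{\pm m}$, $v_{i\bot}^{*m}$, and $T_{i\bot}^{*m}$ are orthogonal to $n_i^m$. This step is bookkeeping, but it is what makes the substitution of the flux formulas tractable.

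Next I would substitute the P-wave flux components \eref{eqn:flux:start} and the S-wave flux components \eref{eqn:flux:end} into the two integrands and simplify. For the interior mortar this reproduces the identity already displayed, expressing the integrand as
\begin{align*}
-\alpha\,k_p^m\Bigl(Z_p^{-m}Z_p^{+m}{(v_\parallel^{-m}-v_\parallel^{+m})}^2 + {(T_\parallel^{-m}-T_\parallel^{+m})}^2\Bigr)
- \sum_{i=1}^{3}\alpha\,k_s^m\Bigl(Z_s^{-m}Z_s^{+m}{(v_{i\bot}^{-m}-v_{i\bot}^{+m})}^2 + {(T_{i\bot}^{-m}-T_{i\bot}^{+m})}^2\Bigr),
\end{align*}
and for a boundary mortar reproduces the corresponding boundary identity. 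Since $\rho^{\pm m}>0$, $\mu^{\pm m}>0$, and $K^{\pm m}>0$ (so that $\lambda^{\pm m}+2\mu^{\pm m}>0$), the impedances $Z_s^{\pm m}$, $Z_p^{\pm m}$ are strictly positive and the normalizing constants $k_s^m$, $k_p^m$ are strictly positive as well; hence for any $\alpha\ge 0$ both expressions are sums of non-positive quadratic forms and are therefore $\le 0$ pointwise.

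The final step is to apply Theorem~\ref{thm:exact:stable}: since the mortar integrands are non-positive, we obtain $d\mathcal{E}/dt\le 0$ and, upon integration in time, $\mathcal{E}(t)\le\mathcal{E}(0)$. The only subtle point I anticipate is the boundary-flux reduction: one must verify that the prescriptions $v_i^{+m}=v_i^{-m}$, $T_i^{+m}=-T_i^{-m}$, $Z_{s,p}^{+m}=Z_{s,p}^{-m}$ really do reduce \eref{eqn:flux:start}--\eref{eqn:flux:end} to the boundary expressions given in the text and deliver the clean $-\alpha{(T^{-m}_\parallel)}^2/Z_p^{-m} - \sum_i\alpha{(T^{-m}_{i\bot})}^2/Z_s^{-m}$ form; this is a short but sign-sensitive calculation and is the place I would be most careful. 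Everything else is algebraic consequence of results already established.
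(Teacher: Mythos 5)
Your proposal is correct and follows exactly the paper's own route: decompose the mortar integrands into normal-parallel and perpendicular parts, substitute the flux \eref{eqn:flux:start}--\eref{eqn:flux:end} to obtain the explicit non-positive quadratic forms (positivity of $Z_{s,p}^{\pm m}$ and $k_{s,p}^{m}$ under $\rho,\mu,K>0$, with $\alpha\ge 0$), and invoke Theorem~\ref{thm:exact:stable}. The boundary-condition reduction you flag as the delicate step is likewise the same one the paper carries out just before stating the corollary.
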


\subsection{Quadrature Integration}\label{sec:quad}
We now turn to the case when quadrature is used to evaluate the integrals in the~\eref{eqn:dg:vel}--\eref{eqn:dg:stress}, and show that this has semi-discrete
stability even with some variational crimes. In the analysis that follows, we
make minimal assumptions about the quadrature rules and interpolation
procedures.  The analysis is independent of element shape.

A quadrature based version of~\eref{eqn:dg:vel}--\eref{eqn:dg:stress} is:
For each $e \in \EE$, find $\mathrm{v}_{j}^{e} \in \hat{\VV}^{e}$ and
symmetric $\sigma_{ij}^{e} = \sigma_{ji}^{e} \in \hat{\VV}^{e}$ such that:
\begin{align}
  \label{eqn:dg:vel:quad}
  \mat{M}_{\rho}^{e} \fd{\vec{v}_{i}^{e}}{t}
  =&
  -
  {\left(\mat{S}_{j}^{e}\right)}^{T}\vec{\sigma}_{ij}^{e}
  +
  \sum_{m \in \MM^{e}}
  {\left(\mat{P}^{m,e}\right)}^{T}\mat{W}^{m}\vec{T}_{i}^{*m[e]},\\
  \label{eqn:dg:stress:quad}
  \mat{M}^{e}_{S_{ijkl}} \fd{\vec{\sigma}_{kl}^{e}}{t}
  =&
  \frac{1}{2} \left(\mat{S}_{j}^{e}\vec{v}_{i}^{e} + \mat{S}_{i}^{e}\vec{v}_{j}^{e}\right)
  +
  \sum_{m \in \MM^{e}}
  {\left(\mat{P}^{m,e}_{n_{j}}\right)}^{T} \mat{W}^{m}
  \left(\vec{v}_{i}^{*m} - \vec{v}_{i}^{m[e]}\right).
\end{align}
Here, the subscript $n_{j}$ in $\mat{P}^{m,e}_{n_{j}}$ denotes the fact that
this projection operator could be defined such that the unit normal $n_{j}$ is
multiplied before or after the projection, that is
$\mat{P}^{m,e}_{n_{j}}\vec{\sigma}_{ij}^{e}$ approximates
${\left(n_{j}\sigma_{ij}\right)}^{m,e}$ at the quadrature nodes; see discussion
following~\eref{eqn:dg:vel}--\eref{eqn:dg:stress}.
The trial velocity vector on the mortar depends on which side of mortar $m$
element $e$ is on. Namely,  $\vec{v}_{i}^{m[e]} = \vec{v}_{i}^{+m}$ if $e \in
\EE^{+m}$ and $\vec{v}_{i}^{m[e]} = \vec{v}_{i}^{-m}$ if $e \in \EE^{-m}$ with
\begin{align}
  \label{eqn:mortar:state:velocity:discrete}
  \vec{v}_{i}^{\pm m}
  &=
  \sum_{e \in \EE^{\pm m}}
  \mat{P}^{m,e} \vec{v}_{i}^{e}.
\end{align}
The remaining notation in~\eref{eqn:dg:vel:quad}--\eref{eqn:dg:stress:quad} is discussed in
\sref{sec:notation:quad}.
We call this semi-discrete scheme the \emph{symmetric flux integral method}
(SFIM) as both the trial and test functions are projected to the mortar for
integration, and note that SFIM can be applied to either the full-side or
split-side mortars as shown in \fref{fig:mortars}.

The energy in element $e \in \EE$ is defined as
\begin{align}
  \label{eqn:energy:quad}
  \mathcal{E}^{e} &=
  \frac{1}{2}
  {\left(\vec{v}_{i}^{e}\right)}^{T}
  \mat{M}^{e}_{\rho}
  \vec{v}_{i}^{e}
  +
  \frac{1}{2}
  {\left(\vec{\sigma}_{ij}^{e}\right)}^{T}
  \mat{M}_{S_{ijkl}}^{e}
  \vec{\sigma}_{kl}^{e},
\end{align}
with the total energy in the system defined as $\mathcal{E} = \sum_{e\in\EE}
\mathcal{E}^{e}$. For this to be a well-defined norm, it is required that
$\mat{M}^{e}_{\rho}$ and $\mat{\bar{M}}^{e}_{S}$, see~\eref{eqn:mass:stress},
be symmetric positive definite.

The time derivative of the energy gives
\begin{align}
  \fd{\mathcal{E}^{e}}{t} &=
  \sum_{m \in \MM^{e}}
  \fd{\mathcal{E}^{m,e}}{t},\\
  \fd{\mathcal{E}^{m,e}}{t} &=
  {\left(\vec{v}^{m,e}_{i}\right)}^{T} \mat{W}^{m}\vec{T}_{i}^{*m[e]}
  +
  {\left(\vec{v}_{i}^{*m}\right)}^{T}
  \mat{W}^{m}
  \vec{T}^{m,e}_{i}
  -
  {\left(\vec{v}_{i}^{m[e]}\right)}^{T}
  \mat{W}^{m}
  \vec{T}^{m,e}_{i},
\end{align}
where
$\vec{v}_{i}^{m,e} = \mat{P}^{m,e}\vec{v}_{i}^{e}$
and
$\vec{T}^{m,e}_{i} = \mat{P}^{m,e}_{n_{j}}\vec{\sigma}_{ij}^{e}$.
Considering only a single mortar element $m \in \MM$ and summing contributions
from all $e \in \EE^{m}$ gives
\begin{align}
  \fd{\mathcal{E}^{m}}{t} &=
  \sum_{e \in \EE^{m}}
  \fd{\mathcal{E}^{m,e}}{t},
\end{align}
which for an interior mortar element is
\begin{align}
  \label{eqn:mortar:energy:quad}
  \fd{\mathcal{E}^{m}}{t} =\;&
  {\left(\vec{v}^{-m}_{i}-\vec{v}^{+m}\right)}^{T} \mat{W}^{m}\vec{T}_{i}^{*m}
  +
  {\left(\vec{v}_{i}^{*m}\right)}^{T}
  \mat{W}^{m}
  {\left(\vec{T}^{-m}_{i}-\vec{T}^{+m}_{i}\right)}
  \\
  \notag
  &-
  {\left(\vec{v}_{i}^{-m}\right)}^{T}
  \mat{W}^{m}
  \vec{T}^{-m}_{i}
  +
  {\left(\vec{v}_{i}^{+m}\right)}^{T}
  \mat{W}^{m}
  \vec{T}^{+m}_{i}
\end{align}
and for a boundary mortar element is
\begin{align}
  \label{eqn:mortar:energy:bc:quad}
  \fd{\mathcal{E}^{m}}{t} &=
  {\left(\vec{v}^{-m}_{i}\right)}^{T} \mat{W}^{m}\vec{T}_{i}^{*m}
  +
  {\left(\vec{v}_{i}^{*m}\right)}^{T}
  \mat{W}^{m}
  \vec{T}^{-m}_{i}
  -
  {\left(\vec{v}_{i}^{-m}\right)}^{T}
  \mat{W}^{m}
  \vec{T}^{-m}_{i}.
\end{align}
Here the plus and minus side traction states on the mortar are defined as
\begin{align}
  \vec{T}_{i}^{\pm m}
  &=
  \mp \sum_{e \in \EE^{\pm m}}
  \vec{T}_{i}^{m,e};
\end{align}
the velocity state on the mortar is defined in~\eref{eqn:mortar:state:velocity:discrete}.
The traction component of the numerical flux is defined as
$\mat{T}_{i}^{*m} = \mat{T}_{i}^{*m[e]}$ if $e \in \EE^{-m}$ and
$\mat{T}_{i}^{*m} = -\mat{T}_{i}^{*m[e]}$ if $e \in \EE^{+m}$.

\begin{theorem}\label{thm:quad:stable}
  Given a numerical flux such that the integrands of the energy rate~\eref{eqn:mortar:energy} and~\eref{eqn:mortar:energy:bc} are non-positive,
  the energy rates~\eref{eqn:mortar:energy:quad} and~\eref{eqn:mortar:energy:bc:quad} are non-positive and the quadrature-based
  discontinuous Galerkin method~\eref{eqn:dg:vel:quad}--\eref{eqn:dg:stress:quad} satisfies the energy
  estimate $\mathcal{E}(t) \le \mathcal{E}(0)$ and is energy stable.
\end{theorem}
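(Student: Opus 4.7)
The plan is to observe that the quadrature-based mortar energy rates \eref{eqn:mortar:energy:quad} and \eref{eqn:mortar:energy:bc:quad} share the exact algebraic structure of their continuous counterparts \eref{eqn:mortar:energy} and \eref{eqn:mortar:energy:bc}, so the pointwise non-positivity hypothesis on the flux should transfer from the continuous integrand to the discrete quadrature summand essentially by inspection. The whole argument reduces to a diagonal-weight calculation on each mortar, followed by the same element-to-global bookkeeping used in Theorem~\ref{thm:exact:stable}, and a final integration in time.

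First I would record that $\mat{W}^m$ is diagonal with strictly positive entries $\omega_n^m\, S_J^m(\vec{r}^m_n) > 0$, since the surface quadrature is primitive with positive weights and the surface Jacobian determinant is positive. Consequently every quadratic form ${\vec{a}}^T \mat{W}^m \vec{b}$ appearing in \eref{eqn:mortar:energy:quad} collapses to $\sum_n \omega_n^m\, S_J^m(\vec{r}^m_n)\, a_n b_n$. Grouping the four terms of \eref{eqn:mortar:energy:quad} under a common sum then gives
\begin{equation*}
\fd{\mathcal{E}^m}{t} \;=\; \sum_{n=1}^{n_q^m} \omega_n^m\, S_J^m(\vec{r}^m_n)\, \mathcal{I}_n,
\end{equation*}
where $\mathcal{I}_n$ is precisely the integrand of \eref{eqn:mortar:energy} evaluated at the $n$th mortar quadrature node using the discrete nodal values of $v_i^{\pm m}$, $T_i^{\pm m}$, $v_i^{*m}$, and $T_i^{*m}$ there.

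Next I would invoke the flux hypothesis. By assumption, the integrand of \eref{eqn:mortar:energy} is non-positive for all admissible states, so in particular $\mathcal{I}_n \le 0$ for every $n$; since the weights and surface Jacobian values are strictly positive, the sum above is non-positive, hence $d\mathcal{E}^m/dt \le 0$. An identical calculation rewrites \eref{eqn:mortar:energy:bc:quad} as a positive weighted nodal sum of the integrand of \eref{eqn:mortar:energy:bc}, which is non-positive by hypothesis. Summing over all $m \in \MM$ and using the identity $d\mathcal{E}/dt = \sum_{e \in \EE} d\mathcal{E}^e/dt = \sum_{m \in \MM} d\mathcal{E}^m/dt$, whose derivation is structurally the same as in Theorem~\ref{thm:exact:stable} and does not rely on exact integration, yields $d\mathcal{E}/dt \le 0$; integration in time then produces $\mathcal{E}(t) \le \mathcal{E}(0)$.

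The only real obstacle is making the identification in the second step clean: one must verify that the diagonal structure of $\mat{W}^m$, together with the fact that all of the mortar states $\vec{v}_i^{\pm m}$, $\vec{T}_i^{\pm m}$, $\vec{v}_i^{*m}$, and $\vec{T}_i^{*m}$ are represented at (or have already been interpolated via $\mat{L}^m$ and $\mat{P}^{m,e}$ to) the same set of mortar quadrature nodes, genuinely collapses \eref{eqn:mortar:energy:quad} into a nodewise-positive combination of the continuous integrand. Once that bookkeeping is in place, all of the flux-specific work has already been done inside Theorem~\ref{thm:exact:stable}, and the quadrature-based scheme inherits semi-discrete energy stability with no additional assumption on the quadrature rule beyond positivity of its weights.
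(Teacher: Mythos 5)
Your proposal is correct and follows essentially the same route as the paper: both exploit the diagonal, positive structure of $\mat{W}^{m}$ to rewrite the quadrature energy rates as positively weighted nodal sums of the continuous integrands of \eref{eqn:mortar:energy} and \eref{eqn:mortar:energy:bc}, apply the pointwise non-positivity hypothesis, and then reuse the bookkeeping of Theorem~\ref{thm:exact:stable}. No gaps.
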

\begin{proof}
  Recall that $\mat{W}^{m}$ is a diagonal matrix of quadrature weights and
  surface Jacobian determinants, thus~\eref{eqn:mortar:energy:quad} can be written as
  \begin{align}
  \fd{\mathcal{E}^{m}}{t}
  =&\;
    \sum_{k=1}^{n_{q}^{m}}
    \omega^{m}_{k}\;
    \Big\{
    S_{J}^{m}
    \left(
    \left(v_{i}^{-m}-v_{i}^{+m}\right)T_{i}^{*m} +
    v_{i}^{*m}\left(T_{i}^{-m}-T_{i}^{+m}\right)
    \right.\\
    &\notag
    \qquad
    \qquad
    \quad
    \left.
    - v_{i}^{-m}T_{i}^{-m}
    + v_{i}^{+m}T_{i}^{+m}
    \right){\Big\}}_{k},
  \end{align}
  and~\eref{eqn:mortar:energy:bc:quad} as
  \begin{align}
  \fd{\mathcal{E}^{m}}{t}
  =&\;
    \sum_{k=1}^{n_{q}^{m}}
    \omega^{m}_{k}\;
    \Big\{
    S_{J}^{m}
    \left(
    v_{i}^{-m}T_{i}^{*m} +
    v_{i}^{*m}T_{i}^{-m}
    - v_{i}^{-m}T_{i}^{-m}
    \right){\Big\}}_{k},
  \end{align}
  where ${\left\{\cdot\right\}}_{k}$ denotes that the term inside the brackets
  is evaluated at mortar quadrature node $k$. Since the terms inside the
  brackets are the same as the integrands of~\eref{eqn:mortar:energy} and~\eref{eqn:mortar:energy:bc}, a numerical flux that results in non-positive
  integrands for~\eref{eqn:mortar:energy} and~\eref{eqn:mortar:energy:bc} will
  result in~\eref{eqn:mortar:energy:quad} and~\eref{eqn:mortar:energy:bc:quad}
  being non-positive. The remainder of the proof is identical to the proof of
  Theorem~\ref{thm:exact:stable}.
  \qed{}
\end{proof}

\subsection{Conservation and Constant Preservation}\label{sec:constant}
Since the diffeomorphic mappings between the physical and reference elements
(see \sref{sec:mesh}) were defined prior to the introduction of an
approximation space, gaps and overlaps in the mesh may occur when the mappings
are approximated. For an isoparametric geometry
approximation, one approach is to interpolate the geometry transform at
nodal degrees of freedom. Though this approach is straightforward to implement,
differences in the interpolations of the mappings across nonconforming
interfaces can result in differing approximations; across conforming faces this
problem does not arise because the aliasing errors are the same on both sides of
a face. By construction, the stability of SFIM is not impacted by a discretely
discontinuous mesh (since the volume and surface stability are decoupled), but a
discretely discontinuous mesh can lead to a lack of conservation and breakdown
of constant preserving.

Here, we show how SFIM can be made conservative and constant preservation by
imposing a set of accuracy and consistency constraints. Most of the constraints
are quite natural (such as the ability to exactly approximate and differentiate
constants). One of the constraints requires that the operators satisfy a
discrete divergence theorem and this implicitly implies some continuity of the
metric terms. For tensor product hexahedral elements, with
Legendre-Gauss-Lobatto quadrature we satisfy these constraints by making the
mesh \emph{discretely watertight} (i.e., removing holes and overlaps) as well as
ensuring that aliasing errors in certain metric relations are the same across
nonconforming faces; details are given in \aref{app:div}.

In the following, we assume that the domain is periodic so that the impact of the
boundary conditions can be ignored.

The following constraints are imposed in order to make SFIM conservative and
constant preserving:
\begin{constraint}[Approximation Consistency]\label{con:app:con}
  We assume that the volume and mortar approximation spaces can exactly
  represent constants. The notation $\vec{1}^{e}$ and $\vec{1}^{m}$ are used to
  denote the expansion of $1$ within the chosen basis for a volume element $e
  \in \EE$ and mortar element $m \in \MM$, respectively; for nodal basis
  functions these would be vectors of ones. We also define $\vec{0}^{e} =
  0\vec{1}^{e}$ and $\vec{0}^{m} = 0 \vec{1}^{m}$.
\end{constraint}
\begin{constraint}[Projection Consistency]\label{con:proj:con}
  It is assumed that the discrete projection operators $\mat{P}^{m,e}$ exactly
  project constants from the volume to the mortar. Namely, we assume that for
  each $m \in \MM$ the following holds
  \begin{align}
    \sum_{e\in\EE^{- m}} \mat{P}^{m,e} \vec{1}^{e} =
    \sum_{e\in\EE^{+ m}} \mat{P}^{m,e} \vec{1}^{e} &= \vec{1}^{m}.
  \end{align}
\end{constraint}
\begin{constraint}[Stiffness Consistency]\label{con:stiff:con}
  The stiffness matrices $\mat{S}_{j}^{e}$ are assumed to differentiate
  constants exactly. Namely it is assumed that
  \begin{align}
    \mat{S}_{j}^{e} \vec{1}^{e} = \vec{0}^{e}.
  \end{align}
\end{constraint}
\begin{constraint}[Consistent Numerical Flux]\label{con:flux:con}
  The numerical flux is assumed to be consistent in the sense that if
  $v_{i}^{+m} = v_{i}^{-m}$ and $T_{i}^{+m} = T_{i}^{-m}$ for $i = 1,2,3$ then
  $v_{i}^{*m} = v_{i}^{\pm m}$ and $T_{i}^{*m} = T_{i}^{\pm m}$.
\end{constraint}
\begin{constraint}[Discrete Divergence Theorem]\label{con:div}
  We assume that the stiffness matrices satisfy the following discrete
  divergence theorem:
  \begin{align}
    \label{eqn:disc:div}
    {\left(\mat{S}_{j}^{e}\right)}^{T} \vec{1}^{e} &=
    \sum_{m\in\MM^{e}}{\left(\mat{P}^{m,e}\right)}^{T}
    \mat{W}^{m}\vec{n}_{j}^{m[e]},
  \end{align}
  where $\vec{n}_{j}^{m[e]} = \vec{n}_{j}^{m}$ if $e \in \EE^{-m}$ and
  $\vec{n}_{j}^{m[e]} = -\vec{n}_{j}^{m}$ if $e \in \EE^{+m}$ with
  $\vec{n}_{j}^{m}$ being the components of the normal vector at the mortar
  degrees of freedom. We call this a discrete divergence theorem because, after
  multiplication by ${\left(\vec{q}^{e}\right)}^{T}$,~\eref{eqn:disc:div} approximates
  \begin{align}
    \int_{e}J^{e} \pd{q^{e}}{x_{j}} = \sum_{m\in\MM^{e}} \int_{m}S_{J}^{m} n_{j}^{e}
    \mathcal{P}^{m,e} q^{e}
    =
    \int_{\partial e} S_{J}^{e} n_{j}^{e} q^{e},
  \end{align}
  where the last equality assumes that $\mathcal{P}^{m,e}$ is an exact L$^{2}$
  projection.
\end{constraint}
\begin{constraint}[Consistent Constant Traction
  Calculation]\label{con:trac:con} If for a mortar element $m \in \MM$ the
  projected stresses on neighboring volume elements are constant, i.e., for some
  $\sigma_{ij} \in \mathbb{R}$ the element stresses are $\vec{\sigma}_{ij}^{e} =
  \sigma_{ij} \vec{1}^{e}$ for all $e \in \EE^{m}$, then we assume that
  $T^{-m}_{i} = T^{+m}_{i} = \sigma_{ij} \vec{n}_{j}^{m}$; namely
  that the traction on the mortar elements are the $\sigma_{ij}$ weighted sum of
  the normal vectors on the mortar. Recall that following~\eref{eqn:dg:stress}
  we gave two approaches to computing the mortar tractions, either projecting
  the stresses with traction calculation on the mortar or projecting the volume
  face computed tractions to the mortar elements. The first approach, projecting
  the stresses, automatically satisfies this constraint by
  Constraint~\ref{con:proj:con}. If the second approach, projecting the
  tractions, is used then additional assumptions on the normal vectors as
  calculated on the faces of the volume elements as well as the accuracy of the
  projection operators for high-order functions would be needed. In the
  results section we use the approach of projecting the stresses and this
  constraint is satisfied by construction.
\end{constraint}

Constraints~\ref{con:app:con}--\ref{con:stiff:con} are reasonable accuracy and
consistency assumptions for most approximations spaces, and
Constraint~\ref{con:flux:con} is fairly standard for discontinuous Galerkin
methods. As noted above, Constraint~\ref{con:trac:con} holds for our choice of
projecting the tractions. Thus, it only remains to show that
Constraint~\ref{con:div} holds. One of the key features in satisfying
Constraint~\ref{con:div} is the computation of the metric terms. For conforming
meshes, the discrete divergence theorem can be ensured using the curl invariant
form of Kopriva~\cite{Kopriva2006jsc}. In \aref{app:div} we show how the
curl invariant form can be adapted for nonconforming meshes with tensor product
hexahedral elements with Legendre-Gauss-Lobatto quadrature.

For periodic domains, one of the steady state solutions admitted by governing
equations~\eref{eqn:continuous:pde}, regardless of the variability of the
material properties, is constant velocities $v_{i}$ and stresses $\sigma_{ij}$.
By Constraint~\ref{con:app:con}, the constant solution is exactly admissible by
the approximation, and thus it may be desirable to require that a discretization
of~\eref{eqn:continuous:pde} represent this steady state solution with no error,
i.e., that the scheme be constant preserving. It is known that even on
conforming meshes, when the elements are non-affine constant preserving requires
careful handling of the metric terms; see for instance the work of
Kopriva~\cite{Kopriva2006jsc}.

To see that when the above assumptions are satisfied that SFIM is constant
preserving, we substitute $v_{i}^{e} = \beta_{i}$ and $\sigma_{ij} = \alpha_{ij}$ into
the right-hand side~\eref{eqn:dg:vel:quad}--\eref{eqn:dg:stress:quad} with
$\beta_{i}$ and $\alpha_{ij}$ being constants:
\begin{align}
  \label{eqn:dg:vel:constant}
  \mat{M}_{\rho}^{e} \fd{\vec{v}_{i}^{e}}{t}
  =&
  -
  \alpha_{ij} {\left(\mat{S}_{j}^{e}\right)}^{T}\vec{1}^{e}
  +
  \sum_{m \in \MM^{e}}
  {\left(\mat{P}^{m,e}\right)}^{T}\mat{W}^{m}\vec{T}_{i}^{*m[e]},\\
  \label{eqn:dg:stress:constant}
  \mat{M}^{e}_{S_{ijkl}} \fd{\vec{\sigma}_{kl}^{e}}{t}
  =&
  \frac{1}{2} \left(\beta_{i} \mat{S}_{j}^{e}\vec{1}^{e} +
  \beta_{j} \mat{S}_{i}^{e}\vec{1}^{e}\right)
  +
  \sum_{m \in \MM^{e}}
  {\left(\mat{P}^{m,e}_{n_{j}}\right)}^{T} \mat{W}^{m}
  \left(\vec{\beta}_{i}^{*m} - \vec{\beta}_{i}^{m[e]}\right).
\end{align}
Using Constraints~\ref{con:stiff:con} (stiffness consistency)
and~\ref{con:div} (discrete divergence theorem) relations~\eref{eqn:dg:vel:constant}--\eref{eqn:dg:stress:constant} can be rewritten as
\begin{align}
  \label{eqn:dg:vel:constant2}
  \mat{M}_{\rho}^{e} \fd{\vec{v}_{i}^{e}}{t}
  =&
  \sum_{m \in \MM^{e}}
  {\left(\mat{P}^{m,e}\right)}^{T}\mat{W}^{m}
  \left(\vec{T}_{i}^{*m[e]}
  -
  \alpha_{ij}
  \vec{n}_{j}^{m[e]}\right),\\
  \label{eqn:dg:stress:constant2}
  \mat{M}^{e}_{S_{ijkl}} \fd{\vec{\sigma}_{kl}^{e}}{t}
  =&
  \sum_{m \in \MM^{e}}
  {\left(\mat{P}^{m,e}_{n_{j}}\right)}^{T} \mat{W}^{m}
  \left(\vec{\beta}_{i}^{*m} - \vec{\beta}_{i}^{m[e]}\right).
\end{align}
Projection consistency (Constraint~\ref{con:proj:con}) implies that
\begin{align}
  \vec{\beta}_{i}^{m[e]} = \vec{\beta}_{i}^{m[e']}\quad\forall e,e'\in\EE^{m},
\end{align}
and this along with flux consistency (Constraint~\ref{con:flux:con}) gives
\begin{align}
  \vec{\beta}_{i}^{*m} &= \vec{\beta}_{i}^{m[e]},&
  \vec{T}_{i}^{*m} &= \alpha_{ij}\vec{n}_{j}^{m[e]}.
\end{align}
It then follows that~\eref{eqn:dg:stress:constant2} becomes
\begin{align}
  \mat{M}_{\rho}^{e} \fd{\vec{v}_{i}^{e}}{t}
  &=
  \vec{0},&
  \mat{M}^{e}_{S_{ijkl}} \fd{\vec{\sigma}_{kl}^{e}}{t}
  &= \vec{0},
\end{align}
and the solution remains constant in time.

For linear elasticity, the conserved quantities are the components of momentum
$\rho v_{i}$ and strain $\epsilon_{ij} = S_{ijkl} \sigma_{kl}$. To show discrete
conservation we need to show that
\begin{align}
  \sum_{e\in\EE} {\left(\vec{1}^{e}\right)}^{T}\mat{M}_{\rho}^{e} \fd{\vec{v}_{i}^{e}}{t}
  &=
  0,&
  \sum_{e\in\EE} {\left(\vec{1}^{e}\right)}^{T} \mat{M}^{e}_{S_{ijkl}} \fd{\vec{\sigma}_{kl}^{e}}{t}
  &=
  0.
\end{align}
Multiplying~\eref{eqn:dg:vel:quad}--\eref{eqn:dg:stress:quad} on the left by
${\left(\vec{1}^{e}\right)}^{T}$ we have
\begin{align}
  {\left(\vec{1}^{e}\right)}^{T}\mat{M}_{\rho}^{e} \fd{\vec{v}_{i}^{e}}{t}
  =&
  -
  {\left(\vec{1}^{e}\right)}^{T}{\left(\mat{S}_{j}^{e}\right)}^{T}\vec{\sigma}_{ij}^{e}
  +
  \sum_{m \in \MM^{e}}
  {\left(\vec{1}^{m}\right)}^{T}\mat{W}^{m}\vec{T}_{i}^{*m[e]},\\
  {\left(\vec{1}^{e}\right)}^{T}\mat{M}^{e}_{S_{ijkl}} \fd{\vec{\sigma}_{kl}^{e}}{t}
  =&
  \frac{{\left(\vec{1}^{e}\right)}^{T}}{2} \left(\mat{S}_{j}^{e}\vec{v}_{i}^{e} + \mat{S}_{i}^{e}\vec{v}_{j}^{e}\right)
  +
  \sum_{m \in \MM^{e}}
  {\left(\vec{n}_{j}^{e}\right)}^{T} \mat{W}^{m}
  \left(\vec{v}_{i}^{*m} - \vec{v}_{i}^{m[e]}\right).
\end{align}
Using the stiffness consistency and discrete divergence assumptions
(Constraints~\ref{con:stiff:con} and~\ref{con:div}) these become
\begin{align}
  \label{eqn:con:vel}
  {\left(\vec{1}^{e}\right)}^{T}\mat{M}_{\rho}^{e} \fd{\vec{v}_{i}^{e}}{t}
  =&
  \sum_{m \in \MM^{e}}
  {\left(\vec{1}^{e}\right)}^{T}{\left(\mat{P}^{m,e}\right)}^{T}\mat{W}^{m}\vec{T}_{i}^{*m[e]},\\
  \label{eqn:con:strain}
  {\left(\vec{1}^{e}\right)}^{T}\mat{M}^{e}_{S_{ijkl}} \fd{\vec{\sigma}_{kl}^{e}}{t}
  =&
  \sum_{m \in \MM^{e}}
  {\left(\vec{1}^{e}\right)}^{T}{\left(\mat{P}^{m,e}_{n_{j}}\right)}^{T} \mat{W}^{m}
  \vec{v}_{i}^{*m}.
\end{align}
Considering~\eref{eqn:con:vel} and summing over all the elements gives
\begin{align}
  \notag
  \sum_{e\in\EE}
  {\left(\vec{1}^{e}\right)}^{T}\mat{M}_{\rho}^{e} \fd{\vec{v}_{i}^{e}}{t}
  &=
  \sum_{e\in\EE}
  \sum_{m \in \MM^{e}}
  {\left(\vec{1}^{e}\right)}^{T}{\left(\mat{P}^{m,e}\right)}^{T}\mat{W}^{m}\vec{T}_{i}^{*m[e]}\\
  \notag
  &=
  \sum_{m \in \MM}
  \sum_{e\in\MM^{e}}
  {\left(\vec{1}^{e}\right)}^{T}{\left(\mat{P}^{m,e}\right)}^{T}\mat{W}^{m}\vec{T}_{i}^{*m[e]}\\
  \label{eqn:con:vel2}
  &=
  \sum_{m \in \MM}
  \sum_{e\in\MM^{+e}}
  {\left(\vec{1}^{e}\right)}^{T}{\left(\mat{P}^{m,e}\right)}^{T}\mat{W}^{m}\vec{T}_{i}^{*m[e]}\\
  \notag
  &\phantom{=}+
  \sum_{m \in \MM}
  \sum_{e\in\MM^{-e}}
  {\left(\vec{1}^{e}\right)}^{T}{\left(\mat{P}^{m,e}\right)}^{T}\mat{W}^{m}\vec{T}_{i}^{*m[e]}.
\end{align}
Applying projection consistency (Constraint~\ref{con:proj:con}) along with the
fact that $T_{i}^{*m} = T_{i}^{*m[e]}$ for $e\in\EE^{-m}$ and $T_{i}^{*m} =
-T_{i}^{*m[e]}$ for $e\in\EE^{+m}$ gives
\begin{align}
  \sum_{e\in\MM^{\pm e}}
  {\left(\vec{1}^{e}\right)}^{T}{\left(\mat{P}^{m,e}\right)}^{T}\mat{W}^{m}\vec{T}_{i}^{*m[e]}
  =
  \mp
  {\left(\vec{1}^{m}\right)}^{T}\mat{W}^{m}\vec{T}_{i}^{*m},
\end{align}
and thus~\eref{eqn:con:vel2} becomes
\begin{align}
  \notag
  \sum_{e\in\EE}
  {\left(\vec{1}^{e}\right)}^{T}\mat{M}_{\rho}^{e} \fd{\vec{v}_{i}^{e}}{t}
  &=
  \sum_{m \in \MM}
  \left(
  -{\left(\vec{1}^{m}\right)}^{T}\mat{W}^{m}\vec{T}_{i}^{*m}
  +{\left(\vec{1}^{m}\right)}^{T}\mat{W}^{m}\vec{T}_{i}^{*m}\right)
  =
  0.
\end{align}
A similar calculation for~\eref{eqn:con:strain} results in
\begin{align}
  \label{eqn:con:strain2}
  \sum_{e \in \EE}
  {\left(\vec{1}^{e}\right)}^{T}\mat{M}^{e}_{S_{ijkl}} \fd{\vec{\sigma}_{kl}^{e}}{t}
  =&
  \sum_{m \in \MM}
  \sum_{e \in \EE^{+m}}
  {\left(\vec{1}^{e}\right)}^{T}{\left(\mat{P}^{m,e}_{n_{j}}\right)}^{T} \mat{W}^{m}
  \vec{v}_{i}^{*m}\\\notag
  &\phantom{=}+
  \sum_{m \in \MM}
  \sum_{e \in \EE^{+m}}
  {\left(\vec{1}^{e}\right)}^{T}{\left(\mat{P}^{m,e}_{n_{j}}\right)}^{T} \mat{W}^{m}
  \vec{v}_{i}^{*m}.
\end{align}
Which, after applying the traction consistency constraint
(Constraint~\ref{con:trac:con}) leads to~\eref{eqn:con:strain2} becoming
\begin{align}
  \sum_{e \in \EE}
  {\left(\vec{1}^{e}\right)}^{T}\mat{M}^{e}_{S_{ijkl}} \fd{\vec{\sigma}_{kl}^{e}}{t}
  =&
  \sum_{m \in \MM}
  \left(
  -{\left(\vec{n}_{j}^{e}\right)}^{T}\mat{W}^{m} \vec{v}_{i}^{*m}
  +{\left(\vec{n}_{j}^{e}\right)}^{T}\mat{W}^{m} \vec{v}_{i}^{*m}
  \right)
  =
  0.
\end{align}
Thus the scheme satisfies discrete conservation.

The discrete divergence theorem (Constraint~\ref{con:div}) and traction
consistency (Constraint~\ref{con:trac:con}) are only required for
conservation of strain, and if only conservation of momentum is required a
scheme without these properties can be used. If the converse was desired
(automatic conservation of strain) then the weak and strong derivatives in
formulation~\eref{eqn:dg:vel}--\eref{eqn:dg:stress} should be swapped.
\subsection{Comment on the Implementation of~\eref{eqn:dg:stress:quad}}
Due to the implied summation on the right-hand side~\eref{eqn:dg:stress:quad}
the mass matrix to be inverted is not $\mat{M}^{e}_{S_{ijkl}}$ but the combined
matrix $\mat{\bar{M}}^{e}_{S}$ defined in~\eref{eqn:mass:stress}.
In the results section, we will be using tensor product hexahedral elements with
Legendre-Gauss-Lobatto quadrature. With this, mass matrix
$\mat{M}^{e}_{S_{ijkl}} = \mat{M}^{e} \mat{S}_{ijkl}^{e}$ is diagonal with
$\mat{M}^{e}$ being a diagonal matrix of quadrature weights and Jacobian
determinants and $\mat{S}_{ijkl}^{e}$ being a diagonal matrix of the compliance
tensor evaluated at the quadrature nodes (not to be confused with the stiffness
matrix $\mat{S}_{i}^{e}$). In this case, the combined mass matrix is
\begin{align}
  \mat{\bar{M}}^{e}_{S}
  &=
  \mat{\bar{S}}^{e} \left(\mat{I}_{6\times 6} \otimes \mat{M}^{e}\right) =
  \left(\mat{I}_{6\times 6} \otimes \mat{M}^{e}\right) \mat{\bar{S}}^{e},\\
  \mat{\bar{S}}^{e}
  &=
  \begin{bmatrix}
      \mat{S}^{e}_{1111} &   \mat{S}^{e}_{1122} &   \mat{S}^{e}_{1133} & 2 \mat{S}^{e}_{1123} & 2 \mat{S}^{e}_{1113} & 2 \mat{S}^{e}_{1112} \\
      \mat{S}^{e}_{1122} &   \mat{S}^{e}_{2222} &   \mat{S}^{e}_{2233} & 2 \mat{S}^{e}_{2223} & 2 \mat{S}^{e}_{2213} & 2 \mat{S}^{e}_{2212} \\
      \mat{S}^{e}_{1133} &   \mat{S}^{e}_{2233} &   \mat{S}^{e}_{3333} & 2 \mat{S}^{e}_{3323} & 2 \mat{S}^{e}_{3313} & 2 \mat{S}^{e}_{3312} \\
    2 \mat{S}^{e}_{1123} & 2 \mat{S}^{e}_{2223} & 2 \mat{S}^{e}_{3323} & 4 \mat{S}^{e}_{2323} & 4 \mat{S}^{e}_{2313} & 4 \mat{S}^{e}_{2312} \\
    2 \mat{S}^{e}_{1113} & 2 \mat{S}^{e}_{2213} & 2 \mat{S}^{e}_{3313} & 4 \mat{S}^{e}_{2313} & 4 \mat{S}^{e}_{1313} & 4 \mat{S}^{e}_{1312} \\
    2 \mat{S}^{e}_{1112} & 2 \mat{S}^{e}_{2212} & 2 \mat{S}^{e}_{3312} & 4 \mat{S}^{e}_{2312} & 4 \mat{S}^{e}_{1312} & 4 \mat{S}^{e}_{1212}
  \end{bmatrix},
\end{align}
Additionally, the inverse of the combined mass matrix is
\begin{align}
  {\left(\mat{\bar{M}}^{e}_{S}\right)}^{-1}
  &=
  \mat{\bar{M}}^{e}_{C}
  =
  \mat{\bar{C}}^{e} \left(\mat{I}_{6\times 6} \otimes {\left(\mat{M}^{e}\right)}^{-1}\right)
  =
  \left(\mat{I}_{6\times 6} \otimes {\left(\mat{M}^{e}\right)}^{-1}\right) \mat{\bar{C}}^{e},\\
  \mat{\bar{C}}^{e}
  &=
  \begin{bmatrix}
    \mat{C}^{e}_{1111} & \mat{C}^{e}_{1122} & \mat{C}^{e}_{1133} & \mat{C}^{e}_{1123} & \mat{C}^{e}_{1113} & \mat{C}^{e}_{1112} \\
    \mat{C}^{e}_{1122} & \mat{C}^{e}_{2222} & \mat{C}^{e}_{2233} & \mat{C}^{e}_{2223} & \mat{C}^{e}_{2213} & \mat{C}^{e}_{2212} \\
    \mat{C}^{e}_{1133} & \mat{C}^{e}_{2233} & \mat{C}^{e}_{3333} & \mat{C}^{e}_{3323} & \mat{C}^{e}_{3313} & \mat{C}^{e}_{3312} \\
    \mat{C}^{e}_{1123} & \mat{C}^{e}_{2223} & \mat{C}^{e}_{3323} & \mat{C}^{e}_{2323} & \mat{C}^{e}_{2313} & \mat{C}^{e}_{2312} \\
    \mat{C}^{e}_{1113} & \mat{C}^{e}_{2213} & \mat{C}^{e}_{3313} & \mat{C}^{e}_{2313} & \mat{C}^{e}_{1313} & \mat{C}^{e}_{1312} \\
    \mat{C}^{e}_{1112} & \mat{C}^{e}_{2212} & \mat{C}^{e}_{3312} & \mat{C}^{e}_{2312} & \mat{C}^{e}_{1312} & \mat{C}^{e}_{1212}
  \end{bmatrix},
\end{align}
where $\mat{C}^{e}_{ijkl}$ is the diagonal matrix of stiffness tensor elements
$C_{ijkl}$ evaluated at the quadrature nodes. With this, for tensor product
elements,~\eref{eqn:dg:stress:quad} is equivalently written as
\begin{align}
  \mat{M}^{e} \fd{\vec{\sigma}_{kl}^{e}}{t}
  =&
  \frac{1}{2} \mat{C}_{ijkl}^{e} \left(\mat{S}_{k}^{e}\vec{v}_{l}^{e} + \mat{S}_{l}^{e}\vec{v}_{k}^{e}\right)\\
  \notag
  &+
  \sum_{m \in \MM^{e}}
  \mat{C}_{ijkl}^{e}
  {\left(\mat{P}^{m,e}_{n_{k}}\right)}^{T} \mat{W}^{m}
  \left(\vec{v}_{l}^{*m} - \vec{v}_{l}^{m[e]}\right),
\end{align}
where we highlight that $\mat{C}_{ijkl}^{e}$ is defined on the volume
element and not the mortar.

For many element types such a decomposition is not possible because the
interpolation and quadrature points are different. One option in this case is to
invert $\mat{\bar{M}}_{S}^{e}$ on each element. Alternatively, one could
use the weight-adjusted approach of Chan, Hewett, and
Warburton~\cite{ChanHewettWarburton2017_curviWADG} and let
\begin{align}
  \mat{\bar{M}}^{e}_{S}
  &=
  \left(\mat{I}_{6\times 6} \otimes \mat{M}^{e}\right)
  {\left(\mat{\bar{M}}^{e}_{C}\right)}^{-1}
  \left(\mat{I}_{6\times 6} \otimes \mat{M}^{e}\right),
\end{align}
which then allows~\eref{eqn:dg:stress:quad} to be written as
\begin{align}
  \mat{M}^{e} \fd{\vec{\sigma}_{kl}^{e}}{t}
  =&
  \frac{1}{2} \mat{M}_{C_{ijkl}}^{e} {\left(\mat{M}^{e}\right)}^{-1} \left(\mat{S}_{k}^{e}\vec{v}_{l}^{e} + \mat{S}_{l}^{e}\vec{v}_{k}^{e}\right)\\
  \notag
  &+
  \sum_{m \in \MM^{e}}
  \mat{M}_{C_{ijkl}}^{e} {\left(\mat{M}^{e}\right)}^{-1}
  {\left(\mat{P}^{m,e}_{n_{k}}\right)}^{T} \mat{W}^{m}
  \left(\vec{v}_{l}^{*m} - \vec{v}_{l}^{m[e]}\right).
\end{align}
The weight adjusted approach can also be used for the case of nonconstant
Jacobian determinants when the mass matrix $\mat{M}^{e}$ is not
diagonal~\cite{ChanHewettWarburton2017_curviWADG}.
\subsection{Comparison with a Previous DG Mortar Method}\label{sec:kopriva}
Here we compare the proposed method with the approach outlined by
Kopriva~\cite{Kopriva1996jcp} and Kopriva, Woodruff, and
Hussaini~\cite{KoprivaWoodruffHussaini2002}, which was analyzed by Bui-Thanh and
Ghattas~\cite{BuiThanhGhattas2012sinum}. In this previous approach, the fluxes are
computed on the mortar, but they are projected back to the local element space
for integration. This is as opposed to our approach where we project the test
and trial functions to the mortar space, and all surface integrals are computed
on the mortar.

In this earlier approach, method~\eref{eqn:dg:vel}--\eref{eqn:dg:stress} would
be
\begin{align}
  \label{eqn:dg:vel:kop}
  \int_{\hat{e}}
  J^{e} \rho\; \phi_{i} \pd{v_{i}^{e}}{t}
  =&
  -
  \int_{\hat{e}}
  J^{e}
  \pd{\phi_{i}}{x_{j}}\sigma_{ij}^{e}
  +
  \int_{\partial\hat{e}} S_{J}^{e}\phi_{i}^{e}T_{i}^{*e},\\
  \label{eqn:dg:stress:kop}
  \int_{\hat{e}}
  J^{e}
  \psi_{ij} S_{ijkl} \pd{\sigma_{kl}^{e}}{t}
  =&
  \int_{\hat{e}}
  \frac{J^{e}}{2} \psi_{ij} \left(\pd{v_{i}^{e}}{x_{j}} + \pd{v_{j}^{e}}{x_{i}}\right)
  \\\notag
  &+
  \int_{\partial\hat{e}} S_{J}^{e} n_{j}^{e} \psi_{ij}^{e}
  \left(v_{i}^{*e} - v_{i}^{e}\right).
\end{align}
The numerical flux terms $T^{*e}_{i}$ and $v^{*e}_{i}$ are the L$^{2}$-projected
fluxes from the mortar onto the surface of the volume element, e.g.,
L$^{2}$-projection from the set of mortar elements $\MM^{e}$.
Since the L$^{2}$-projection is only applied to the trial function (and not the
test function), we call this approach the \emph{asymmetric flux integral method}
(AFIM).  As with the SFIM proposed above, the AFIM can be applied to either the
full-side or split-side mortars shown in \fref{fig:mortars}.

With exact integration and L$^{2}$-projection, this scheme has the same
stability properties as~\eref{eqn:dg:vel}--\eref{eqn:dg:stress}, but when
inexact quadrature is used this scheme may admit energy growth (even on
affine elements).  In the case of affine meshes with constant coefficients,
Bui-Thanh and Ghattas~\cite{BuiThanhGhattas2012sinum} showed that
Legendre-Gauss-Lobatto integration leads to an energy estimate of the form
$\mathcal{E}(t) \le e^{ct} \mathcal{E}(0)$ where $c > 0$ is a small but positive
constant that converges to zero under mesh refinement.  Thus, even though
stable, the scheme admits exponential energy growth.

The main difficulty in achieving a strictly non-increasing energy estimate for
this scheme is that when quadrature is used, the inexact face mass matrix and
L$^{2}$-projection operator are no longer consistent. This means that integrals
over the element face space and the mortar space are not equivalent.  One
approach to overcoming these difficulties would be to use inexact
L$^{2}$-projections defined such that the quadrature version of~\eref{eqn:dg:vel:kop}--\eref{eqn:dg:stress:kop} is purely dissipative. With some
reformation, the proposed scheme~\eref{eqn:dg:vel:quad}--\eref{eqn:dg:stress:quad} could be interpreted as giving
a consistent approach for defining these inexact L$^{2}$-projections.

\section{Computational Results}\label{sec:results}
In this section, the energy stability, conservation, and constant
preserving properties of SFIM are verified for $d = 3$ with isotropic elasticity
and the accuracy of the scheme is investigated.  The adapted meshes in the
examples are generated with \texttt{p4est}~\cite{BursteddeWilcoxGhattas11} using
its topology iterator~\cite{IsaacBursteddeWilcoxEtAl15}.  Hexahedral elements
with tensor product Legendre-Gauss-Lobatto (LGL) nodes are used for
interpolation and integration (e.g., the so-called discontinuous Galerkin
spectral element method~\cite{Black1999Kybernetika,Kopriva2009book}); LGL
quadrature is also used on the mortar elements. A computational advantage of LGL
quadrature is that some of the degrees of freedom exist on the element faces
which means that for conforming faces, no interpolation operation is required to
compute the flux.  That said, an LGL quadrature rule with $N+1$ points can only
integrate polynomials of degree $2N-1$ exactly, and thus the diagonal mass
matrix defined using LGL quadrature is inexact for two polynomials of degree
$N$. When curved elements and variable coefficients are used, the metric terms
and material properties are multiplied with the quadrature weights, thus
maintaining the diagonal structure of the mass matrix (at the cost of further
under-integration).  Despite this, as shown above, the newly proposed method
does not require exact integration as stability is achieved through the use of
skew-symmetry. The geometry terms are approximated in an isoparametric fashion
(i.e., the coordinate mapping is evaluated at the interpolation nodes of the
reference element) with the metric terms evaluated using the curl invariant form
of Kopriva~\cite{Kopriva2006jsc}; see also \aref{app:div}.

All of the schemes introduced above are in semi-discrete form with time left
continuous.  The scheme can be written as a system of linear ordinary differential
equations:
\begin{align}
  \label{eqn:ode}
  \fd{\vec{q}}{t} = \mat{A}\vec{q},
\end{align}
where $\vec{q}$ is the vector of stresses and particle velocities at all the
degrees of freedom. As is standard when discretizing hyperbolic equations, in our
implementation $\mat{A}$ is not explicitly formed only its action on $\vec{q}$
is computed. We integrate~\eref{eqn:ode} in time using the fourth-order,
low-storage, Runge-Kutta scheme of Carpenter and Kennedy~\cite{CarpenterKennedy1994}
((5,4) $2N$-Storage RK scheme, solution $3$). To compute the time step for the
Runge-Kutta method, at every node of the mesh we compute
\begin{align}
  \beta &= \min(\beta_{1}, \beta_{2}, \beta_{3}),\\
  \beta_{k} &= {\left(N
  \sqrt{C_{p} \pd{r_{k}}{x_{i}}\pd{r_{k}}{x_{i} }}\right)}^{-1}&
  \mbox{(no summation over $k$)},
\end{align}
where $N$ is the polynomial order and $C_{p} = \sqrt{(\lambda + 2\mu)/\rho}$ is
the p-wave speed of the material at the node. The maximum time step is then
chosen to be proportional to the minimum $\beta$ over the whole mesh.

\subsection{Planewave Solution in Periodic Box}
\begin{table}
  \centering
  \begin{tabular}{rllll}
    \toprule
    & \multicolumn{1}{c}{SFIM, full-side}
    & \multicolumn{1}{c}{SFIM, split-side}
    & \multicolumn{1}{c}{AFIM, full-side}
    & \multicolumn{1}{c}{AFIM, split-side}\\
    {$E$}
    & \multicolumn{1}{c}{error (rate)}
    & \multicolumn{1}{c}{error (rate)}
    & \multicolumn{1}{c}{error (rate)}
    & \multicolumn{1}{c}{error (rate)}\\
    \midrule
    &
    \multicolumn{4}{c}{Polynomial order $N = 3$}\\
    $     36$ & $1.9\times10^{  0\phantom{0}}$         & $1.8\times10^{  0\phantom{0}}$         & $1.7\times10^{  0\phantom{0}}$         & $1.7\times10^{  0\phantom{0}}$        \\
    $    288$ & $5.6\times10^{ -2\phantom{0}}$ $(5.1)$ & $5.5\times10^{ -2\phantom{0}}$ $(5.0)$ & $4.9\times10^{ -2\phantom{0}}$ $(5.1)$ & $4.9\times10^{ -2\phantom{0}}$ $(5.1)$\\
    $   2304$ & $1.4\times10^{ -3\phantom{0}}$ $(5.3)$ & $1.4\times10^{ -3\phantom{0}}$ $(5.3)$ & $1.2\times10^{ -3\phantom{0}}$ $(5.3)$ & $1.2\times10^{ -3\phantom{0}}$ $(5.3)$\\
    $  18432$ & $8.8\times10^{ -5\phantom{0}}$ $(4.0)$ & $8.9\times10^{ -5\phantom{0}}$ $(4.0)$ & $6.6\times10^{ -5\phantom{0}}$ $(4.2)$ & $6.6\times10^{ -5\phantom{0}}$ $(4.2)$\\
    $ 147456$ & $7.5\times10^{ -6\phantom{0}}$ $(3.5)$ & $8.6\times10^{ -6\phantom{0}}$ $(3.4)$ & $4.7\times10^{ -6\phantom{0}}$ $(3.8)$ & $4.7\times10^{ -6\phantom{0}}$ $(3.8)$\\
    $1179648$ & $6.9\times10^{ -7\phantom{0}}$ $(3.4)$ & $9.6\times10^{ -7\phantom{0}}$ $(3.2)$ & $3.6\times10^{ -7\phantom{0}}$ $(3.7)$ & $3.6\times10^{ -7\phantom{0}}$ $(3.7)$\\
    \midrule
    &
    \multicolumn{4}{c}{Polynomial order $N = 4$}\\
    $     36$ & $1.1\times10^{ -1\phantom{0}}$         & $1.1\times10^{ -1\phantom{0}}$         & $9.5\times10^{ -2\phantom{0}}$         & $9.6\times10^{ -2\phantom{0}}$        \\
    $    288$ & $1.3\times10^{ -3\phantom{0}}$ $(6.5)$ & $1.2\times10^{ -3\phantom{0}}$ $(6.5)$ & $1.1\times10^{ -3\phantom{0}}$ $(6.5)$ & $1.1\times10^{ -3\phantom{0}}$ $(6.5)$\\
    $   2304$ & $4.3\times10^{ -5\phantom{0}}$ $(4.9)$ & $4.3\times10^{ -5\phantom{0}}$ $(4.8)$ & $3.5\times10^{ -5\phantom{0}}$ $(5.0)$ & $3.4\times10^{ -5\phantom{0}}$ $(5.0)$\\
    $  18432$ & $1.8\times10^{ -6\phantom{0}}$ $(4.6)$ & $1.9\times10^{ -6\phantom{0}}$ $(4.5)$ & $1.2\times10^{ -6\phantom{0}}$ $(4.9)$ & $1.2\times10^{ -6\phantom{0}}$ $(4.8)$\\
    $ 147456$ & $7.7\times10^{ -8\phantom{0}}$ $(4.5)$ & $9.8\times10^{ -8\phantom{0}}$ $(4.3)$ & $4.3\times10^{ -8\phantom{0}}$ $(4.8)$ & $4.3\times10^{ -8\phantom{0}}$ $(4.8)$\\
    \midrule
    &
    \multicolumn{4}{c}{Polynomial order $N = 5$}\\
    $     36$ & $4.2\times10^{ -3\phantom{0}}$         & $4.0\times10^{ -3\phantom{0}}$         & $3.5\times10^{ -3\phantom{0}}$         & $3.4\times10^{ -3\phantom{0}}$        \\
    $    288$ & $7.6\times10^{ -5\phantom{0}}$ $(5.8)$ & $7.4\times10^{ -5\phantom{0}}$ $(5.8)$ & $6.5\times10^{ -5\phantom{0}}$ $(5.8)$ & $6.4\times10^{ -5\phantom{0}}$ $(5.7)$\\
    $   2304$ & $1.5\times10^{ -6\phantom{0}}$ $(5.7)$ & $1.5\times10^{ -6\phantom{0}}$ $(5.6)$ & $1.2\times10^{ -6\phantom{0}}$ $(5.8)$ & $1.2\times10^{ -6\phantom{0}}$ $(5.8)$\\
    $  18432$ & $3.1\times10^{ -8\phantom{0}}$ $(5.6)$ & $3.5\times10^{ -8\phantom{0}}$ $(5.4)$ & $2.2\times10^{ -8\phantom{0}}$ $(5.8)$ & $2.1\times10^{ -8\phantom{0}}$ $(5.8)$\\
    $ 147456$ & $6.8\times10^{-10           }$ $(5.5)$ & $9.3\times10^{-10           }$ $(5.2)$ & $4.1\times10^{-10           }$ $(5.7)$ & $4.1\times10^{-10           }$ $(5.7)$\\
    \midrule
    &
    \multicolumn{4}{c}{Polynomial order $N = 6$}\\
    $     36$ & $5.2\times10^{ -4\phantom{0}}$         & $5.2\times10^{ -4\phantom{0}}$         & $4.7\times10^{ -4\phantom{0}}$         & $4.7\times10^{ -4\phantom{0}}$        \\
    $    288$ & $4.4\times10^{ -6\phantom{0}}$ $(6.9)$ & $4.3\times10^{ -6\phantom{0}}$ $(6.9)$ & $3.7\times10^{ -6\phantom{0}}$ $(7.0)$ & $3.6\times10^{ -6\phantom{0}}$ $(7.0)$\\
    $   2304$ & $4.3\times10^{ -8\phantom{0}}$ $(6.7)$ & $4.5\times10^{ -8\phantom{0}}$ $(6.6)$ & $3.2\times10^{ -8\phantom{0}}$ $(6.8)$ & $3.2\times10^{ -8\phantom{0}}$ $(6.8)$\\
    $  18432$ & $4.6\times10^{-10           }$ $(6.5)$ & $5.4\times10^{-10           }$ $(6.4)$ & $3.0\times10^{-10           }$ $(6.7)$ & $3.0\times10^{-10           }$ $(6.7)$\\
    \midrule
    &
    \multicolumn{4}{c}{Polynomial order $N = 7$}\\
    $     36$ & $3.9\times10^{ -5\phantom{0}}$         & $3.7\times10^{ -5\phantom{0}}$         & $3.1\times10^{ -5\phantom{0}}$         & $3.0\times10^{ -5\phantom{0}}$        \\
    $    288$ & $2.2\times10^{ -7\phantom{0}}$ $(7.5)$ & $2.2\times10^{ -7\phantom{0}}$ $(7.4)$ & $1.8\times10^{ -7\phantom{0}}$ $(7.4)$ & $1.8\times10^{ -7\phantom{0}}$ $(7.4)$\\
    $   2304$ & $1.2\times10^{ -9\phantom{0}}$ $(7.5)$ & $1.2\times10^{ -9\phantom{0}}$ $(7.5)$ & $9.4\times10^{-10           }$ $(7.6)$ & $9.4\times10^{-10           }$ $(7.6)$\\
    \bottomrule
  \end{tabular}
  \caption{Error and estimated convergence rates for a planewave propagating
  through an adapted, affine mesh. The base mesh and a log-log plot of the error
  for the SFIM with full-side mortar elements are shown in
  \fref{fig:planewave}.\label{tab:planewave}}
\end{table}
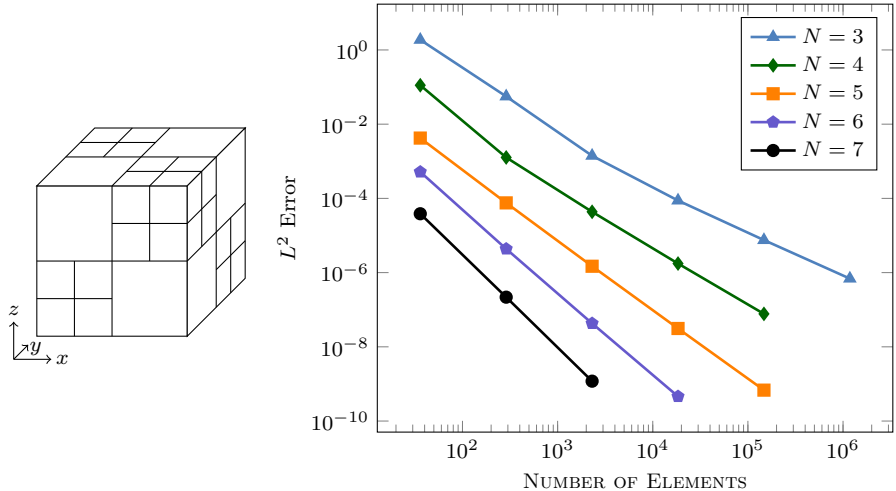
\begin{figure}[tb]
  \centering
  \begin{minipage}{0.28\textwidth}
  \begin{tikzpicture}
    \draw[black,fill=white] ( 0, 0, 0) -- ( 2, 0, 0) -- ( 2, 2, 0) -- ( 0, 2, 0) -- cycle;
    \draw[black]            ( 0, 1, 0) -- ( 2, 1, 0);
    \draw[black]            ( 1, 0, 0) -- ( 1, 2, 0);
    \draw[black,fill=white] ( 0, 2, 0) -- ( 2, 2, 0) -- ( 2, 2,-2) -- ( 0, 2,-2) -- cycle;
    \draw[black]            ( 0, 2,-1) -- ( 2, 2,-1);
    \draw[black]            ( 1, 2, 0) -- ( 1, 2,-2);
    \draw[black,fill=white] ( 2, 0, 0) -- ( 2, 0,-2) -- ( 2, 2,-2) -- ( 2, 2, 0) -- cycle;
    \draw[black]            ( 2, 2,-1) -- ( 2, 0,-1);
    \draw[black]            ( 2, 1, 0) -- ( 2, 1,-2);
    \draw[black]            ( 2, 1,-1.5) -- ( 2, 0,-1.5);
    \draw[black]            ( 2, 0.5,-1) -- ( 2, 0.5,-2);
    \draw[black]            ( 2, 2,-0.5) -- ( 2, 1,-0.5);
    \draw[black]            ( 2, 1.5,0) -- ( 2, 1.5,-1);
    \draw[black]            ( 1, 2,-0.5) -- ( 2, 2,-0.5);
    \draw[black]            ( 1.5, 2,0) -- ( 1.5, 2,-1);
    \draw[black]            ( 1, 1.5,0) -- ( 2, 1.5,0);
    \draw[black]            ( 1.5, 1,0) -- ( 1.5, 2,0);
    \draw[black]            ( 0, 0.5,0) -- ( 1, 0.5,0);
    \draw[black]            ( 0.5, 0,0) -- ( 0.5, 1,0);
    \draw[black]            ( 0, 2,-1.5) -- ( 1, 2,-1.5);
    \draw[black]            ( 0.5, 2,-1) -- ( 0.5, 2,-2);
    \draw[black,->]           (-0.5,-0.5,-0.5) -- ( 0,-0.5,-0.5);
    \node at (0.15,-0.5,-0.5)   {$x$};
    \draw[black,->]           (-0.5,-0.5,-0.5) -- (-0.5,-0.5,-1);
    \node at (-0.4,-0.55,-1)   {$y$};
    \draw[black,->]           (-0.5,-0.5,-0.5) -- (-0.5, 0,-0.5);
    \node at (-0.5, 0.15,-0.5)   {$z$};
  \end{tikzpicture}
  \end{minipage}
  \begin{minipage}{0.7\textwidth}
  \begin{tikzpicture}
    \begin{loglogaxis}[
        xlabel=\textsc{Number of Elements},
        ylabel=L$^2$ Error,
        legend pos= north east
      ]

      \addplot[color=blu,mark=triangle*,line width=1] plot coordinates {%
      (     36, 1.8741722800475045e+00)
      (    288, 5.5878350954557579e-02)
      (   2304, 1.4062963708636018e-03)
      (  18432, 8.7566614639383086e-05)
      ( 147456, 7.5225245583356782e-06)
      (1179648, 6.8946858545280761e-07)
      };

      \addplot[color=grn,mark=diamond*,line width=1] plot coordinates {%
      (     36, 1.1179884147242004e-01)
      (    288, 1.2673687279605818e-03)
      (   2304, 4.3488768309695393e-05)
      (  18432, 1.7551778919635786e-06)
      ( 147456, 7.7219780694960657e-08)
      };

      \addplot[color=org,mark=square*,line width=1] plot coordinates {%
      (     36, 4.2189754823362393e-03)
      (    288, 7.5684322429350501e-05)
      (   2304, 1.4840635580510757e-06)
      (  18432, 3.1196878622945801e-08)
      ( 147456, 6.8069542506441082e-10)
      };

      \addplot[color=prp,mark=pentagon*,line width=1] plot coordinates {%
      (     36, 5.1653277688456058e-04)
      (    288, 4.3547317556368376e-06)
      (   2304, 4.2816312193076380e-08)
      (  18432, 4.6042395782750303e-10)
      };

      \addplot[color=black,mark=*,line width=1] plot coordinates {%
        (     36, 3.8565217760936989e-05)
        (    288, 2.1774845535348811e-07)
        (   2304, 1.1868702110569641e-09)
      };

      \legend{$N=3$\\$N=4$\\$N=5$\\$N=6$\\$N=7$\\};
    \end{loglogaxis}
  \end{tikzpicture}
  \end{minipage}
  \caption{(left) Base mesh for planewave test. (right) Log-log plot of $E$
  (number of elements) versus the L$^{2}$ error (measured with the energy norm)
  for a mesh of affine (square) elements. Only the error for the full-side
  mortar is shown as the other three cases look similar. Numerical values and
  rates for all four methods are given in
  Table~\ref{tab:planewave}.\label{fig:planewave}}
\end{figure}
In this test, SFIM and AFIM are compared with both full-side
and split-side mortar elements. To do this, an affine mesh with constant
material properties is considered. Subsequent tests involve variable material
properties and non-affine meshes and will only use SFIM\@. Recall that SFIM,
described in \sref{sec:quad}, refers to the symmetric application of
L$^2$-projection operators to both the test and trial function, whereas AFIM,
described in \sref{sec:kopriva}, refers to the asymmetric application of the
L$^2$-projection operators to the trial and flux functions (but not the
test function). Recall also that full-side mortar elements conform to the larger
volume element faces and the split-side mortar elements conform the to smaller
volume element faces (see \fref{fig:mortars}).

The domain is taken to be the unit cube: $\Omega = {[0,1]}^3$. The domain is
initially partitioned into a $2 \times 2 \times 2$ mesh of hexahedral elements,
and then four elements are further subdivided into $8$ elements; see
\fref{fig:planewave}. This base mesh has $|\EE| = 36$ elements, and in the base mesh
only faces internal to the refined blocks are conforming.

The material is taken to be homogeneous, isotropic with $\rho = 2$, $\mu = 3$,
and $\lambda = 4$. The solution is a planewave propagating
in the $x_{1}$-direction:
\begin{align}
  u_{1} &= \cos\left(2 \pi \left(c_{p} t + x_{1}\right) \right),&
  u_{2} &= \cos\left(2 \pi \left(c_{s} t + x_{1}\right) \right),&
  u_{3} &= \cos\left(2 \pi \left(c_{s} t + x_{1}\right) \right),
\end{align}
with $c_{p} = \sqrt{(\lambda + 2 \mu) / \rho}$ and $c_{s} = \sqrt{\mu / \rho}$
being the P- and S-wave speeds of the material. Here the solution is written in
terms of the displacements, and the velocity and stresses are
\begin{align}
  \label{eqn:u2vS}
  v_{i} &= \pd{u_{i}}{t},&
  \sigma_{ij} &= \lambda \delta_{ij} \pd{u_{k}}{x_{k}}
  + \mu \left( \pd{u_{i}}{x_{j}} + \pd{u_{j}}{x_{i}} \right).
\end{align}

Table~\ref{tab:planewave} and \fref{fig:planewave} show convergence results for
this planewave test for varying polynomial order $N$; refinement of the mesh is
done with bisection so that each hexahedral element is subdivided into $8$
affine elements of equal size. The final time of the simulation is $t = 20 /
c_{s}$, e.g., the planewave propagates around the unit cube $20$ times. As
Table~\ref{tab:planewave} shows, four methods are considered for each $N$: SFIM
with split-side mortar elements, SFIM with full-side mortar elements, AFIM with
split-side mortar elements, and AFIM with full-side mortar elements.  In all
cases, the error in the solution is measured using the quadrature based energy
norm~\eref{eqn:energy:quad}. The numerical flux used for these tests is the
upwind flux described by~\eref{eqn:flux:full}--\eref{eqn:flux:material} with
$\alpha = 1$.

As can be seen Table~\ref{tab:planewave}, all four methods converge at
high-rates. That said, the AFIMs do have lower errors and
improved rates. The two newly proposed methods seem
to be tending towards convergence rates at order $N$ (as opposed to $N+1/2$),
suggesting that some accuracy is lost with the improved stability properties.
\subsection{Eigenvalue Spectrum for Periodic Box}
\begin{table}
  \begin{center}
    \begin{tabular}{llll}
      \toprule
      & $\max_{k}\;\Re(\lambda_{k})$ & $\min_{k}\;\Re(\lambda_{k})$ & $\max_{k}\;|\Im(\lambda_{k})|$\\
      \midrule
      & \multicolumn{3}{c}{Upwind Flux}\\
      SFIM, full-side            & $6.91\times10^{-13}$ & $-5.66\times10^{2}$ & $4.25\times10^{2}$\\
      SFIM, split-side         & $4.81\times10^{-13}$ & $-3.07\times10^{2}$ & $1.47\times10^{2}$\\
      AFIM, full-side    & $7.19\times10^{-05}$ & $-3.06\times10^{2}$ & $1.73\times10^{2}$\\
      AFIM, split-side & $6.19\times10^{-05}$ & $-3.07\times10^{2}$ & $1.47\times10^{2}$\\
      \midrule
      & \multicolumn{3}{c}{Central Flux}\\
      SFIM, full-side            & $1.58\times10^{-12}$ & $-9.32\times10^{-13}$ & $4.89\times10^{2}$\\
      SFIM, split-side         & $1.10\times10^{-12}$ & $-8.08\times10^{-13}$ & $2.07\times10^{2}$\\
      AFIM, full-side    & $8.57\times10^{-1 }$ & $-8.57\times10^{-1 }$ & $2.08\times10^{2}$\\
      AFIM, split-side & $1.15\times10      $ & $-1.15\times10      $ & $2.07\times10^{2}$\\
      \bottomrule
    \end{tabular}
  \end{center}
  \caption{Table comparing the extrema of the real and imaginary parts of the
  eigenvalue spectrum for all four methods with both an upwind and central flux
  with $N=4$ on the base mesh ($|\EE| = 36$). Eigenvalues are computed by forming
  the matrix and using the MATLAB~\cite{MATLAB2015b} \texttt{eig}
  command. (See also \fref{fig:eig}.)\label{tab:eig}}
\end{table}
\begin{figure}
  \begin{minipage}{0.48\textwidth}
    \includegraphics{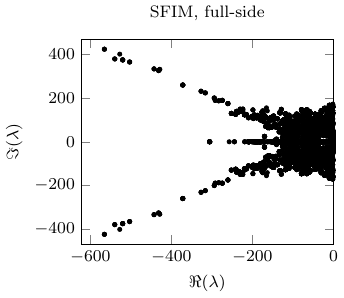}
  \end{minipage}
  \hfill
  \begin{minipage}{0.48\textwidth}
    \includegraphics{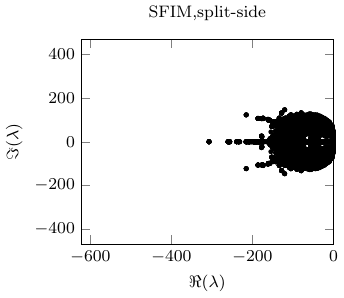}
  \end{minipage}
  \medskip

  \begin{minipage}{0.48\textwidth}
    \includegraphics{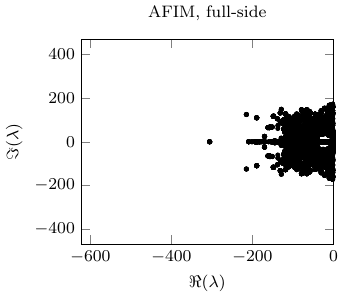}
  \end{minipage}
  \hfill
  \begin{minipage}{0.48\textwidth}
    \includegraphics{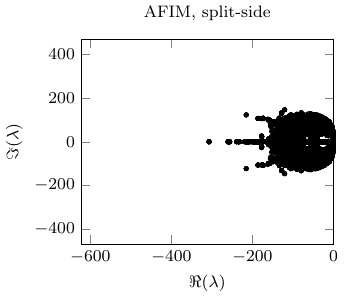}
  \end{minipage}
  \caption{Eigenvalue spectrum for the $|\EE| = 36$ mesh with polynomial order $N =
  4$ upwind flux. (See Table~\ref{tab:eig} for the limits of the spectrum for
  these spectra.)\label{fig:eig}}
\end{figure}
To highlight the stability properties of the methods, we now consider the
eigenvalue spectrum of each of the methods using the mesh and material
properties of the previous periodic box test problem. \fref{fig:eig} shows the
eigenvalue spectrum for all the methods with an upwind flux on the base mesh
($\EE|
= 36$); the spectrum is computed by forming the matrix $\mat{A}$
and then finding the eigenvalues with the MATLAB~\cite{MATLAB2015b} \texttt{eig}
command. Table~\ref{tab:eig} gives the maximum and minimum real part of the
eigenvalue spectrum for all four methods with both the upwind ($\alpha = 1$) and
central ($\alpha = 0$) flux.  As can be seen, the SFIM with both split-side and
full-side mortar elements are stable (in the sense that the maximum real part
of the eigenvalue spectrum is close to zero) consistent with the energy analysis
earlier in the paper. The AFIMs have positive real parts, even
with the upwind flux, consistent
with the energy analysis of Bui-Thanh and Ghattas which allows for
energy growth (e.g., a positive, real part of the spectrum). For this mesh,
which has a total of $40500$ degrees of freedom, the AFIMs
with the upwind flux have, respectively, $18$ and $12$
computed eigenvalues that have a real part larger than $10^{-12}$ with a maximum
of, respectively, $7.19\times10^{-05}$ and $6.40\times10^{-04}$.

As can be seen from both the \fref{fig:eig} and Table~\ref{tab:eig}, the
spectral radius of the SFIM with full-side mortar elements is larger than the
SFIM with split-side mortar elements ($2.3$ times larger with the upwind flux
and $2.4$ times larger with the central flux).  The implication of this is that
the largest time step that can be used for the SFIM with full-side mortar
elements (on this mesh) is almost half the
size of that of the SFIM with split-side mortar elements. Given that this particular mesh has a
high nonconforming to conforming ratio ($2/3$ of the mortar is nonconforming)
it is unclear whether this stiffness would be seen in practical simulations. For
the upwind flux, both SFIM and AFIM with full-side mortar elements have twice
as many eigenvalues with approximately zero real part as compared with the
SFIM and AFIM with split-side mortar elements ($\sim 8300$ versus $\sim 4200$
eigenvalues with real component less than $10^{-12}$ in magnitude).  This
suggests that the SFIM and AFIM with split-side mortar elements are slightly
more dissipative across the nonconforming interfaces (assuming that there is
energy in the associated eigenfunctions of the solution).

Since even on affine meshes with constant coefficients, AFIM does not guarantee
an energy non-increasing property in the remainder of the paper we only consider
SFIM\@. That said, AFIM is not energy increasing on all meshes when upwinding
($\alpha = 1$) is used; every test we have run using AFIM with a central flux
($\alpha = 0$) has significant energy growth.

\subsection{Conservation, Constant Preserving, and Long-Time Energy Stability}
\begin{figure}[tb]
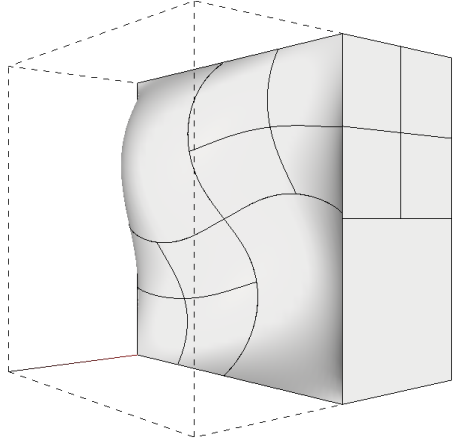

  \centering
%
  \caption{Cross-section of the skewed mesh defined by transform~\eref{eqn:curve}. The total number of elements in the mesh is $|\EE| =
  36$\label{fig:skew:mesh}}
\end{figure}
To further explore the long-time stability as well as the conservation and
constant preserving properties of SFIM a curvilinear mesh with heterogeneous
material properties is now considered. Let $\Omega = {[-1,1]}^3$ be periodic in
all directions. The mesh is made curvilinear by introducing the following global
coordinate transform
\begin{align}
  \label{eqn:curve}
  x_{i} &= r_{j} Q_{ij},
\end{align}
where the coefficients $Q_{ij}$ are the elements of the matrix
\begin{align}
  \mat{Q}(\beta)
  =
  \begin{bmatrix}
    \cos^{2}(\beta) & -\cos(\beta) \sin(\beta) & \sin(\beta)\\
    \sin(\beta) & \cos(\beta) & 0\\
    -\cos(\beta) \sin(\beta) & \sin^{2}(\beta) & \cos(\beta)
  \end{bmatrix}
\end{align}
where $\beta = (\pi/4) \prod_{i=1}^{3}(1-r_{i}^2)$. When $r_{1} = r_{2} = r_{3} =
0$ this corresponds to two rotations of $\pi/4$ and there is no rotation when
$|r_{i}| = 1$ for any $i\in\{1,2,3\}$. A cross-section of the mesh is shown in
\fref{fig:skew:mesh} and the same refinement patten as in the planewave test has
been used (every other corner has been refined once for a total of 36 elements).
At every degree of freedom of the mesh the material properties are assigned
using a pseudorandom number generator which results in a discontinuous,
non-smooth description; the S-wave and P-wave speeds vary from $4.4$ to $6.5$
and $7.2$ to $10.7$, respectively. All of the tests in this section are run with
polynomial order $N = 4$.

Three different approaches to handling the geometry are considered which we call
the interpolated, watertight, and continuous metric approach. In the
interpolated geometry handling, the coordinate transform~\eref{eqn:curve} is
sampled at the LGL nodal degrees of freedom. Since the transform is
non-polynomial the mesh will be discretely discontinuous across nonconforming
faces. In the watertight approach, the physical mesh is made continuous by
interpolating the $x_{i}$ face values from larger faces and edges to smaller faces
and edges\footnote{Nonconforming edge values need to be modified in order to
ensure that new gaps and overlaps are not introduced along the edges of
conforming elements.}. Finally, in the continuous metric approach the product
$S_{J}n_{i}$ is made proportional across nonconforming faces and edges by
ensuring that aliasing errors in the computation of these terms are incurred
similarly; see \aref{app:div}.

In order to test the long-time stability and the conservation properties of SFIM,
the initial condition is generated with a pseudorandom number generator. This is
done to widely distribute the energy in the solution across various eigenmodes
of the operator.  The simulation is then run to time $t = 10 L / c_{s\min}$
where $L = 2 \sqrt{3}$ is the longest corner-to-corner distance in the domain
and $c_{s\min} = \sqrt{20}$ is the slowest wave speed in the system; in the
figures in this section time is normalized by $t_{0} = L / c_{s\min}$.

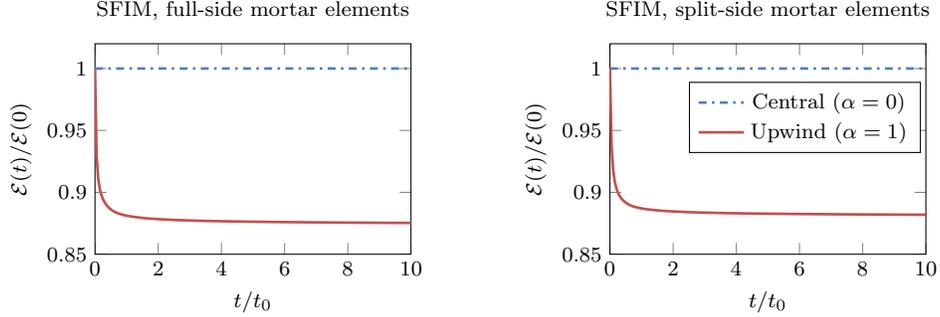
\begin{figure}
  \begin{minipage}{0.45\textwidth}
    \begin{tikzpicture}
      \begin{axis}[%
        width=0.75\textwidth,
        height=0.5\textwidth,
        scale only axis,
        xmin=0,xmax=10,
        ymin=0.85,ymax=1.02,
        legend style={at={(0.25,0.65)},anchor=west},
        ylabel={$\mathcal{E}(t) / \mathcal{E}(0)$}, xlabel={$t / t_{0}$},
        title={SFIM, full-side mortar elements}
        ]
          \addplot [color=blu,line width=1,dashdotted]
            table[col sep=comma, row sep=crcr]{%
              0.000000e+00, 1.000000e+00\\
              4.983863e-02, 1.000000e+00\\
              9.967726e-02, 1.000000e+00\\
              1.495159e-01, 1.000000e+00\\
              1.993545e-01, 9.999999e-01\\
              2.491931e-01, 9.999999e-01\\
              2.990318e-01, 9.999999e-01\\
              3.488704e-01, 9.999999e-01\\
              3.987090e-01, 9.999999e-01\\
              4.485477e-01, 9.999999e-01\\
              4.983863e-01, 9.999999e-01\\
              5.482249e-01, 9.999999e-01\\
              5.980635e-01, 9.999998e-01\\
              6.479022e-01, 9.999998e-01\\
              6.977408e-01, 9.999998e-01\\
              7.475794e-01, 9.999998e-01\\
              7.974181e-01, 9.999998e-01\\
              8.472567e-01, 9.999998e-01\\
              8.970953e-01, 9.999998e-01\\
              9.469339e-01, 9.999998e-01\\
              9.967726e-01, 9.999998e-01\\
              1.046611e+00, 9.999998e-01\\
              1.096450e+00, 9.999997e-01\\
              1.146288e+00, 9.999997e-01\\
              1.196127e+00, 9.999997e-01\\
              1.245966e+00, 9.999997e-01\\
              1.295804e+00, 9.999997e-01\\
              1.345643e+00, 9.999997e-01\\
              1.395482e+00, 9.999997e-01\\
              1.445320e+00, 9.999997e-01\\
              1.495159e+00, 9.999997e-01\\
              1.544997e+00, 9.999997e-01\\
              1.594836e+00, 9.999997e-01\\
              1.644675e+00, 9.999997e-01\\
              1.694513e+00, 9.999996e-01\\
              1.744352e+00, 9.999996e-01\\
              1.794191e+00, 9.999996e-01\\
              1.844029e+00, 9.999996e-01\\
              1.893868e+00, 9.999996e-01\\
              1.943707e+00, 9.999996e-01\\
              1.993545e+00, 9.999996e-01\\
              2.043384e+00, 9.999996e-01\\
              2.093222e+00, 9.999996e-01\\
              2.143061e+00, 9.999996e-01\\
              2.192900e+00, 9.999996e-01\\
              2.242738e+00, 9.999996e-01\\
              2.292577e+00, 9.999996e-01\\
              2.342416e+00, 9.999996e-01\\
              2.392254e+00, 9.999995e-01\\
              2.442093e+00, 9.999995e-01\\
              2.491931e+00, 9.999995e-01\\
              2.541770e+00, 9.999995e-01\\
              2.591609e+00, 9.999995e-01\\
              2.641447e+00, 9.999995e-01\\
              2.691286e+00, 9.999995e-01\\
              2.741125e+00, 9.999995e-01\\
              2.790963e+00, 9.999995e-01\\
              2.840802e+00, 9.999995e-01\\
              2.890640e+00, 9.999995e-01\\
              2.940479e+00, 9.999995e-01\\
              2.990318e+00, 9.999995e-01\\
              3.040156e+00, 9.999995e-01\\
              3.089995e+00, 9.999995e-01\\
              3.139834e+00, 9.999995e-01\\
              3.189672e+00, 9.999994e-01\\
              3.239511e+00, 9.999994e-01\\
              3.289349e+00, 9.999994e-01\\
              3.339188e+00, 9.999994e-01\\
              3.389027e+00, 9.999994e-01\\
              3.438865e+00, 9.999994e-01\\
              3.488704e+00, 9.999994e-01\\
              3.538543e+00, 9.999994e-01\\
              3.588381e+00, 9.999994e-01\\
              3.638220e+00, 9.999994e-01\\
              3.688058e+00, 9.999994e-01\\
              3.737897e+00, 9.999994e-01\\
              3.787736e+00, 9.999994e-01\\
              3.837574e+00, 9.999994e-01\\
              3.887413e+00, 9.999994e-01\\
              3.937252e+00, 9.999994e-01\\
              3.987090e+00, 9.999994e-01\\
              4.036929e+00, 9.999993e-01\\
              4.086768e+00, 9.999993e-01\\
              4.136606e+00, 9.999993e-01\\
              4.186445e+00, 9.999993e-01\\
              4.236283e+00, 9.999993e-01\\
              4.286122e+00, 9.999993e-01\\
              4.335961e+00, 9.999993e-01\\
              4.385799e+00, 9.999993e-01\\
              4.435638e+00, 9.999993e-01\\
              4.485477e+00, 9.999993e-01\\
              4.535315e+00, 9.999993e-01\\
              4.585154e+00, 9.999993e-01\\
              4.634992e+00, 9.999993e-01\\
              4.684831e+00, 9.999993e-01\\
              4.734670e+00, 9.999993e-01\\
              4.784508e+00, 9.999993e-01\\
              4.834347e+00, 9.999993e-01\\
              4.884186e+00, 9.999993e-01\\
              4.934024e+00, 9.999993e-01\\
              4.983863e+00, 9.999992e-01\\
              5.033701e+00, 9.999992e-01\\
              5.083540e+00, 9.999992e-01\\
              5.133379e+00, 9.999992e-01\\
              5.183217e+00, 9.999992e-01\\
              5.233056e+00, 9.999992e-01\\
              5.282895e+00, 9.999992e-01\\
              5.332733e+00, 9.999992e-01\\
              5.382572e+00, 9.999992e-01\\
              5.432410e+00, 9.999992e-01\\
              5.482249e+00, 9.999992e-01\\
              5.532088e+00, 9.999992e-01\\
              5.581926e+00, 9.999992e-01\\
              5.631765e+00, 9.999992e-01\\
              5.681604e+00, 9.999992e-01\\
              5.731442e+00, 9.999992e-01\\
              5.781281e+00, 9.999992e-01\\
              5.831120e+00, 9.999992e-01\\
              5.880958e+00, 9.999992e-01\\
              5.930797e+00, 9.999992e-01\\
              5.980635e+00, 9.999992e-01\\
              6.030474e+00, 9.999991e-01\\
              6.080313e+00, 9.999991e-01\\
              6.130151e+00, 9.999991e-01\\
              6.179990e+00, 9.999991e-01\\
              6.229829e+00, 9.999991e-01\\
              6.279667e+00, 9.999991e-01\\
              6.329506e+00, 9.999991e-01\\
              6.379344e+00, 9.999991e-01\\
              6.429183e+00, 9.999991e-01\\
              6.479022e+00, 9.999991e-01\\
              6.528860e+00, 9.999991e-01\\
              6.578699e+00, 9.999991e-01\\
              6.628538e+00, 9.999991e-01\\
              6.678376e+00, 9.999991e-01\\
              6.728215e+00, 9.999991e-01\\
              6.778053e+00, 9.999991e-01\\
              6.827892e+00, 9.999991e-01\\
              6.877731e+00, 9.999991e-01\\
              6.927569e+00, 9.999991e-01\\
              6.977408e+00, 9.999991e-01\\
              7.027247e+00, 9.999991e-01\\
              7.077085e+00, 9.999990e-01\\
              7.126924e+00, 9.999990e-01\\
              7.176762e+00, 9.999990e-01\\
              7.226601e+00, 9.999990e-01\\
              7.276440e+00, 9.999990e-01\\
              7.326278e+00, 9.999990e-01\\
              7.376117e+00, 9.999990e-01\\
              7.425956e+00, 9.999990e-01\\
              7.475794e+00, 9.999990e-01\\
              7.525633e+00, 9.999990e-01\\
              7.575471e+00, 9.999990e-01\\
              7.625310e+00, 9.999990e-01\\
              7.675149e+00, 9.999990e-01\\
              7.724987e+00, 9.999990e-01\\
              7.774826e+00, 9.999990e-01\\
              7.824665e+00, 9.999990e-01\\
              7.874503e+00, 9.999990e-01\\
              7.924342e+00, 9.999990e-01\\
              7.974181e+00, 9.999990e-01\\
              8.024019e+00, 9.999990e-01\\
              8.073858e+00, 9.999990e-01\\
              8.123696e+00, 9.999990e-01\\
              8.173535e+00, 9.999990e-01\\
              8.223374e+00, 9.999989e-01\\
              8.273212e+00, 9.999989e-01\\
              8.323051e+00, 9.999989e-01\\
              8.372890e+00, 9.999989e-01\\
              8.422728e+00, 9.999989e-01\\
              8.472567e+00, 9.999989e-01\\
              8.522405e+00, 9.999989e-01\\
              8.572244e+00, 9.999989e-01\\
              8.622083e+00, 9.999989e-01\\
              8.671921e+00, 9.999989e-01\\
              8.721760e+00, 9.999989e-01\\
              8.771599e+00, 9.999989e-01\\
              8.821437e+00, 9.999989e-01\\
              8.871276e+00, 9.999989e-01\\
              8.921114e+00, 9.999989e-01\\
              8.970953e+00, 9.999989e-01\\
              9.020792e+00, 9.999989e-01\\
              9.070630e+00, 9.999989e-01\\
              9.120469e+00, 9.999989e-01\\
              9.170308e+00, 9.999989e-01\\
              9.220146e+00, 9.999989e-01\\
              9.269985e+00, 9.999989e-01\\
              9.319823e+00, 9.999989e-01\\
              9.369662e+00, 9.999988e-01\\
              9.419501e+00, 9.999988e-01\\
              9.469339e+00, 9.999988e-01\\
              9.519178e+00, 9.999988e-01\\
              9.569017e+00, 9.999988e-01\\
              9.618855e+00, 9.999988e-01\\
              9.668694e+00, 9.999988e-01\\
              9.718533e+00, 9.999988e-01\\
              9.768371e+00, 9.999988e-01\\
              9.818210e+00, 9.999988e-01\\
              9.868048e+00, 9.999988e-01\\
              9.917887e+00, 9.999988e-01\\
              9.967726e+00, 9.999988e-01\\
              1.000000e+01, 9.999988e-01\\
          };
          \addplot [color=red,line width=1,solid]
            table[col sep=comma, row sep=crcr]{%
              0.000000e+00, 1.000000e+00\\
              4.983863e-02, 9.296563e-01\\
              9.967726e-02, 9.114511e-01\\
              1.495159e-01, 9.029091e-01\\
              1.993545e-01, 8.979553e-01\\
              2.491931e-01, 8.945987e-01\\
              2.990318e-01, 8.921083e-01\\
              3.488704e-01, 8.901899e-01\\
              3.987090e-01, 8.886189e-01\\
              4.485477e-01, 8.873383e-01\\
              4.983863e-01, 8.862627e-01\\
              5.482249e-01, 8.853725e-01\\
              5.980635e-01, 8.845747e-01\\
              6.479022e-01, 8.838809e-01\\
              6.977408e-01, 8.833429e-01\\
              7.475794e-01, 8.828679e-01\\
              7.974181e-01, 8.824547e-01\\
              8.472567e-01, 8.820663e-01\\
              8.970953e-01, 8.817338e-01\\
              9.469339e-01, 8.814317e-01\\
              9.967726e-01, 8.811839e-01\\
              1.046611e+00, 8.809306e-01\\
              1.096450e+00, 8.807073e-01\\
              1.146288e+00, 8.804986e-01\\
              1.196127e+00, 8.803065e-01\\
              1.245966e+00, 8.801318e-01\\
              1.295804e+00, 8.799607e-01\\
              1.345643e+00, 8.798044e-01\\
              1.395482e+00, 8.796532e-01\\
              1.445320e+00, 8.795046e-01\\
              1.495159e+00, 8.793543e-01\\
              1.544997e+00, 8.792305e-01\\
              1.594836e+00, 8.790968e-01\\
              1.644675e+00, 8.789872e-01\\
              1.694513e+00, 8.788886e-01\\
              1.744352e+00, 8.787908e-01\\
              1.794191e+00, 8.787030e-01\\
              1.844029e+00, 8.786144e-01\\
              1.893868e+00, 8.785239e-01\\
              1.943707e+00, 8.784465e-01\\
              1.993545e+00, 8.783657e-01\\
              2.043384e+00, 8.782954e-01\\
              2.093222e+00, 8.782292e-01\\
              2.143061e+00, 8.781555e-01\\
              2.192900e+00, 8.780917e-01\\
              2.242738e+00, 8.780258e-01\\
              2.292577e+00, 8.779551e-01\\
              2.342416e+00, 8.778911e-01\\
              2.392254e+00, 8.778313e-01\\
              2.442093e+00, 8.777728e-01\\
              2.491931e+00, 8.777164e-01\\
              2.541770e+00, 8.776662e-01\\
              2.591609e+00, 8.776097e-01\\
              2.641447e+00, 8.775612e-01\\
              2.691286e+00, 8.775122e-01\\
              2.741125e+00, 8.774668e-01\\
              2.790963e+00, 8.774238e-01\\
              2.840802e+00, 8.773810e-01\\
              2.890640e+00, 8.773403e-01\\
              2.940479e+00, 8.773061e-01\\
              2.990318e+00, 8.772673e-01\\
              3.040156e+00, 8.772295e-01\\
              3.089995e+00, 8.771971e-01\\
              3.139834e+00, 8.771621e-01\\
              3.189672e+00, 8.771277e-01\\
              3.239511e+00, 8.770952e-01\\
              3.289349e+00, 8.770611e-01\\
              3.339188e+00, 8.770252e-01\\
              3.389027e+00, 8.769926e-01\\
              3.438865e+00, 8.769611e-01\\
              3.488704e+00, 8.769307e-01\\
              3.538543e+00, 8.768984e-01\\
              3.588381e+00, 8.768699e-01\\
              3.638220e+00, 8.768423e-01\\
              3.688058e+00, 8.768128e-01\\
              3.737897e+00, 8.767874e-01\\
              3.787736e+00, 8.767611e-01\\
              3.837574e+00, 8.767336e-01\\
              3.887413e+00, 8.767087e-01\\
              3.937252e+00, 8.766850e-01\\
              3.987090e+00, 8.766625e-01\\
              4.036929e+00, 8.766395e-01\\
              4.086768e+00, 8.766167e-01\\
              4.136606e+00, 8.765972e-01\\
              4.186445e+00, 8.765755e-01\\
              4.236283e+00, 8.765533e-01\\
              4.286122e+00, 8.765323e-01\\
              4.335961e+00, 8.765105e-01\\
              4.385799e+00, 8.764896e-01\\
              4.435638e+00, 8.764697e-01\\
              4.485477e+00, 8.764505e-01\\
              4.535315e+00, 8.764310e-01\\
              4.585154e+00, 8.764116e-01\\
              4.634992e+00, 8.763948e-01\\
              4.684831e+00, 8.763768e-01\\
              4.734670e+00, 8.763574e-01\\
              4.784508e+00, 8.763389e-01\\
              4.834347e+00, 8.763204e-01\\
              4.884186e+00, 8.763015e-01\\
              4.934024e+00, 8.762810e-01\\
              4.983863e+00, 8.762633e-01\\
              5.033701e+00, 8.762460e-01\\
              5.083540e+00, 8.762276e-01\\
              5.133379e+00, 8.762134e-01\\
              5.183217e+00, 8.761986e-01\\
              5.233056e+00, 8.761822e-01\\
              5.282895e+00, 8.761676e-01\\
              5.332733e+00, 8.761542e-01\\
              5.382572e+00, 8.761394e-01\\
              5.432410e+00, 8.761256e-01\\
              5.482249e+00, 8.761113e-01\\
              5.532088e+00, 8.760968e-01\\
              5.581926e+00, 8.760826e-01\\
              5.631765e+00, 8.760682e-01\\
              5.681604e+00, 8.760545e-01\\
              5.731442e+00, 8.760408e-01\\
              5.781281e+00, 8.760267e-01\\
              5.831120e+00, 8.760145e-01\\
              5.880958e+00, 8.760020e-01\\
              5.930797e+00, 8.759881e-01\\
              5.980635e+00, 8.759750e-01\\
              6.030474e+00, 8.759628e-01\\
              6.080313e+00, 8.759503e-01\\
              6.130151e+00, 8.759376e-01\\
              6.179990e+00, 8.759261e-01\\
              6.229829e+00, 8.759136e-01\\
              6.279667e+00, 8.759008e-01\\
              6.329506e+00, 8.758894e-01\\
              6.379344e+00, 8.758780e-01\\
              6.429183e+00, 8.758656e-01\\
              6.479022e+00, 8.758538e-01\\
              6.528860e+00, 8.758428e-01\\
              6.578699e+00, 8.758321e-01\\
              6.628538e+00, 8.758210e-01\\
              6.678376e+00, 8.758107e-01\\
              6.728215e+00, 8.758007e-01\\
              6.778053e+00, 8.757907e-01\\
              6.827892e+00, 8.757813e-01\\
              6.877731e+00, 8.757716e-01\\
              6.927569e+00, 8.757605e-01\\
              6.977408e+00, 8.757488e-01\\
              7.027247e+00, 8.757386e-01\\
              7.077085e+00, 8.757280e-01\\
              7.126924e+00, 8.757167e-01\\
              7.176762e+00, 8.757061e-01\\
              7.226601e+00, 8.756964e-01\\
              7.276440e+00, 8.756861e-01\\
              7.326278e+00, 8.756758e-01\\
              7.376117e+00, 8.756668e-01\\
              7.425956e+00, 8.756568e-01\\
              7.475794e+00, 8.756465e-01\\
              7.525633e+00, 8.756380e-01\\
              7.575471e+00, 8.756291e-01\\
              7.625310e+00, 8.756198e-01\\
              7.675149e+00, 8.756110e-01\\
              7.724987e+00, 8.756025e-01\\
              7.774826e+00, 8.755946e-01\\
              7.824665e+00, 8.755866e-01\\
              7.874503e+00, 8.755784e-01\\
              7.924342e+00, 8.755704e-01\\
              7.974181e+00, 8.755620e-01\\
              8.024019e+00, 8.755541e-01\\
              8.073858e+00, 8.755467e-01\\
              8.123696e+00, 8.755387e-01\\
              8.173535e+00, 8.755301e-01\\
              8.223374e+00, 8.755225e-01\\
              8.273212e+00, 8.755154e-01\\
              8.323051e+00, 8.755071e-01\\
              8.372890e+00, 8.754991e-01\\
              8.422728e+00, 8.754919e-01\\
              8.472567e+00, 8.754839e-01\\
              8.522405e+00, 8.754757e-01\\
              8.572244e+00, 8.754679e-01\\
              8.622083e+00, 8.754597e-01\\
              8.671921e+00, 8.754514e-01\\
              8.721760e+00, 8.754437e-01\\
              8.771599e+00, 8.754365e-01\\
              8.821437e+00, 8.754293e-01\\
              8.871276e+00, 8.754215e-01\\
              8.921114e+00, 8.754140e-01\\
              8.970953e+00, 8.754073e-01\\
              9.020792e+00, 8.754001e-01\\
              9.070630e+00, 8.753932e-01\\
              9.120469e+00, 8.753864e-01\\
              9.170308e+00, 8.753794e-01\\
              9.220146e+00, 8.753728e-01\\
              9.269985e+00, 8.753665e-01\\
              9.319823e+00, 8.753594e-01\\
              9.369662e+00, 8.753523e-01\\
              9.419501e+00, 8.753461e-01\\
              9.469339e+00, 8.753400e-01\\
              9.519178e+00, 8.753337e-01\\
              9.569017e+00, 8.753274e-01\\
              9.618855e+00, 8.753211e-01\\
              9.668694e+00, 8.753149e-01\\
              9.718533e+00, 8.753091e-01\\
              9.768371e+00, 8.753032e-01\\
              9.818210e+00, 8.752976e-01\\
              9.868048e+00, 8.752921e-01\\
              9.917887e+00, 8.752861e-01\\
              9.967726e+00, 8.752806e-01\\
              1.000000e+01, 8.752773e-01\\
          };
      \end{axis}
    \end{tikzpicture}%
  \end{minipage}
  \hfill
  \begin{minipage}{0.45\textwidth}
    \begin{tikzpicture}
      \begin{axis}[%
        width=0.75\textwidth,
        height=0.5\textwidth,
        scale only axis,
        xmin=0,xmax=10,
        ymin=0.85,ymax=1.02,
        legend style={at={(0.25,0.65)},anchor=west},
        ylabel={$\mathcal{E}(t) / \mathcal{E}(0)$}, xlabel={$t / t_{0}$},
        title={SFIM, split-side mortar elements}
        ]
          \addplot [color=blu,line width=1,dashdotted]
            table[col sep=comma, row sep=crcr]{%
              0.000000e+00, 1.000000e+00\\
              4.983863e-02, 1.000000e+00\\
              9.967726e-02, 9.999999e-01\\
              1.495159e-01, 9.999999e-01\\
              1.993545e-01, 9.999999e-01\\
              2.491931e-01, 9.999999e-01\\
              2.990318e-01, 9.999998e-01\\
              3.488704e-01, 9.999998e-01\\
              3.987090e-01, 9.999998e-01\\
              4.485477e-01, 9.999998e-01\\
              4.983863e-01, 9.999997e-01\\
              5.482249e-01, 9.999997e-01\\
              5.980635e-01, 9.999997e-01\\
              6.479022e-01, 9.999997e-01\\
              6.977408e-01, 9.999997e-01\\
              7.475794e-01, 9.999996e-01\\
              7.974181e-01, 9.999996e-01\\
              8.472567e-01, 9.999996e-01\\
              8.970953e-01, 9.999996e-01\\
              9.469339e-01, 9.999996e-01\\
              9.967726e-01, 9.999996e-01\\
              1.046611e+00, 9.999995e-01\\
              1.096450e+00, 9.999995e-01\\
              1.146288e+00, 9.999995e-01\\
              1.196127e+00, 9.999995e-01\\
              1.245966e+00, 9.999995e-01\\
              1.295804e+00, 9.999995e-01\\
              1.345643e+00, 9.999994e-01\\
              1.395482e+00, 9.999994e-01\\
              1.445320e+00, 9.999994e-01\\
              1.495159e+00, 9.999994e-01\\
              1.544997e+00, 9.999994e-01\\
              1.594836e+00, 9.999994e-01\\
              1.644675e+00, 9.999994e-01\\
              1.694513e+00, 9.999993e-01\\
              1.744352e+00, 9.999993e-01\\
              1.794191e+00, 9.999993e-01\\
              1.844029e+00, 9.999993e-01\\
              1.893868e+00, 9.999993e-01\\
              1.943707e+00, 9.999993e-01\\
              1.993545e+00, 9.999993e-01\\
              2.043384e+00, 9.999992e-01\\
              2.093222e+00, 9.999992e-01\\
              2.143061e+00, 9.999992e-01\\
              2.192900e+00, 9.999992e-01\\
              2.242738e+00, 9.999992e-01\\
              2.292577e+00, 9.999992e-01\\
              2.342416e+00, 9.999992e-01\\
              2.392254e+00, 9.999992e-01\\
              2.442093e+00, 9.999991e-01\\
              2.491931e+00, 9.999991e-01\\
              2.541770e+00, 9.999991e-01\\
              2.591609e+00, 9.999991e-01\\
              2.641447e+00, 9.999991e-01\\
              2.691286e+00, 9.999991e-01\\
              2.741125e+00, 9.999991e-01\\
              2.790963e+00, 9.999991e-01\\
              2.840802e+00, 9.999990e-01\\
              2.890640e+00, 9.999990e-01\\
              2.940479e+00, 9.999990e-01\\
              2.990318e+00, 9.999990e-01\\
              3.040156e+00, 9.999990e-01\\
              3.089995e+00, 9.999990e-01\\
              3.139834e+00, 9.999990e-01\\
              3.189672e+00, 9.999990e-01\\
              3.239511e+00, 9.999990e-01\\
              3.289349e+00, 9.999989e-01\\
              3.339188e+00, 9.999989e-01\\
              3.389027e+00, 9.999989e-01\\
              3.438865e+00, 9.999989e-01\\
              3.488704e+00, 9.999989e-01\\
              3.538543e+00, 9.999989e-01\\
              3.588381e+00, 9.999989e-01\\
              3.638220e+00, 9.999989e-01\\
              3.688058e+00, 9.999989e-01\\
              3.737897e+00, 9.999988e-01\\
              3.787736e+00, 9.999988e-01\\
              3.837574e+00, 9.999988e-01\\
              3.887413e+00, 9.999988e-01\\
              3.937252e+00, 9.999988e-01\\
              3.987090e+00, 9.999988e-01\\
              4.036929e+00, 9.999988e-01\\
              4.086768e+00, 9.999988e-01\\
              4.136606e+00, 9.999988e-01\\
              4.186445e+00, 9.999987e-01\\
              4.236283e+00, 9.999987e-01\\
              4.286122e+00, 9.999987e-01\\
              4.335961e+00, 9.999987e-01\\
              4.385799e+00, 9.999987e-01\\
              4.435638e+00, 9.999987e-01\\
              4.485477e+00, 9.999987e-01\\
              4.535315e+00, 9.999987e-01\\
              4.585154e+00, 9.999987e-01\\
              4.634992e+00, 9.999986e-01\\
              4.684831e+00, 9.999986e-01\\
              4.734670e+00, 9.999986e-01\\
              4.784508e+00, 9.999986e-01\\
              4.834347e+00, 9.999986e-01\\
              4.884186e+00, 9.999986e-01\\
              4.934024e+00, 9.999986e-01\\
              4.983863e+00, 9.999986e-01\\
              5.033701e+00, 9.999986e-01\\
              5.083540e+00, 9.999986e-01\\
              5.133379e+00, 9.999985e-01\\
              5.183217e+00, 9.999985e-01\\
              5.233056e+00, 9.999985e-01\\
              5.282895e+00, 9.999985e-01\\
              5.332733e+00, 9.999985e-01\\
              5.382572e+00, 9.999985e-01\\
              5.432410e+00, 9.999985e-01\\
              5.482249e+00, 9.999985e-01\\
              5.532088e+00, 9.999985e-01\\
              5.581926e+00, 9.999985e-01\\
              5.631765e+00, 9.999984e-01\\
              5.681604e+00, 9.999984e-01\\
              5.731442e+00, 9.999984e-01\\
              5.781281e+00, 9.999984e-01\\
              5.831120e+00, 9.999984e-01\\
              5.880958e+00, 9.999984e-01\\
              5.930797e+00, 9.999984e-01\\
              5.980635e+00, 9.999984e-01\\
              6.030474e+00, 9.999984e-01\\
              6.080313e+00, 9.999984e-01\\
              6.130151e+00, 9.999984e-01\\
              6.179990e+00, 9.999983e-01\\
              6.229829e+00, 9.999983e-01\\
              6.279667e+00, 9.999983e-01\\
              6.329506e+00, 9.999983e-01\\
              6.379344e+00, 9.999983e-01\\
              6.429183e+00, 9.999983e-01\\
              6.479022e+00, 9.999983e-01\\
              6.528860e+00, 9.999983e-01\\
              6.578699e+00, 9.999983e-01\\
              6.628538e+00, 9.999983e-01\\
              6.678376e+00, 9.999982e-01\\
              6.728215e+00, 9.999982e-01\\
              6.778053e+00, 9.999982e-01\\
              6.827892e+00, 9.999982e-01\\
              6.877731e+00, 9.999982e-01\\
              6.927569e+00, 9.999982e-01\\
              6.977408e+00, 9.999982e-01\\
              7.027247e+00, 9.999982e-01\\
              7.077085e+00, 9.999982e-01\\
              7.126924e+00, 9.999982e-01\\
              7.176762e+00, 9.999982e-01\\
              7.226601e+00, 9.999981e-01\\
              7.276440e+00, 9.999981e-01\\
              7.326278e+00, 9.999981e-01\\
              7.376117e+00, 9.999981e-01\\
              7.425956e+00, 9.999981e-01\\
              7.475794e+00, 9.999981e-01\\
              7.525633e+00, 9.999981e-01\\
              7.575471e+00, 9.999981e-01\\
              7.625310e+00, 9.999981e-01\\
              7.675149e+00, 9.999981e-01\\
              7.724987e+00, 9.999981e-01\\
              7.774826e+00, 9.999981e-01\\
              7.824665e+00, 9.999980e-01\\
              7.874503e+00, 9.999980e-01\\
              7.924342e+00, 9.999980e-01\\
              7.974181e+00, 9.999980e-01\\
              8.024019e+00, 9.999980e-01\\
              8.073858e+00, 9.999980e-01\\
              8.123696e+00, 9.999980e-01\\
              8.173535e+00, 9.999980e-01\\
              8.223374e+00, 9.999980e-01\\
              8.273212e+00, 9.999980e-01\\
              8.323051e+00, 9.999980e-01\\
              8.372890e+00, 9.999980e-01\\
              8.422728e+00, 9.999979e-01\\
              8.472567e+00, 9.999979e-01\\
              8.522405e+00, 9.999979e-01\\
              8.572244e+00, 9.999979e-01\\
              8.622083e+00, 9.999979e-01\\
              8.671921e+00, 9.999979e-01\\
              8.721760e+00, 9.999979e-01\\
              8.771599e+00, 9.999979e-01\\
              8.821437e+00, 9.999979e-01\\
              8.871276e+00, 9.999979e-01\\
              8.921114e+00, 9.999979e-01\\
              8.970953e+00, 9.999979e-01\\
              9.020792e+00, 9.999978e-01\\
              9.070630e+00, 9.999978e-01\\
              9.120469e+00, 9.999978e-01\\
              9.170308e+00, 9.999978e-01\\
              9.220146e+00, 9.999978e-01\\
              9.269985e+00, 9.999978e-01\\
              9.319823e+00, 9.999978e-01\\
              9.369662e+00, 9.999978e-01\\
              9.419501e+00, 9.999978e-01\\
              9.469339e+00, 9.999978e-01\\
              9.519178e+00, 9.999978e-01\\
              9.569017e+00, 9.999978e-01\\
              9.618855e+00, 9.999977e-01\\
              9.668694e+00, 9.999977e-01\\
              9.718533e+00, 9.999977e-01\\
              9.768371e+00, 9.999977e-01\\
              9.818210e+00, 9.999977e-01\\
              9.868048e+00, 9.999977e-01\\
              9.917887e+00, 9.999977e-01\\
              9.967726e+00, 9.999977e-01\\
              1.000000e+01, 9.999977e-01\\
          };
          \addplot [color=red,line width=1,solid]
            table[col sep=comma, row sep=crcr]{%
              0.000000e+00, 1.000000e+00\\
              4.983863e-02, 9.390835e-01\\
              9.967726e-02, 9.187927e-01\\
              1.495159e-01, 9.090437e-01\\
              1.993545e-01, 9.036967e-01\\
              2.491931e-01, 9.000416e-01\\
              2.990318e-01, 8.974496e-01\\
              3.488704e-01, 8.955152e-01\\
              3.987090e-01, 8.939468e-01\\
              4.485477e-01, 8.927517e-01\\
              4.983863e-01, 8.917192e-01\\
              5.482249e-01, 8.908857e-01\\
              5.980635e-01, 8.901131e-01\\
              6.479022e-01, 8.894729e-01\\
              6.977408e-01, 8.889956e-01\\
              7.475794e-01, 8.885590e-01\\
              7.974181e-01, 8.881888e-01\\
              8.472567e-01, 8.878342e-01\\
              8.970953e-01, 8.875359e-01\\
              9.469339e-01, 8.872640e-01\\
              9.967726e-01, 8.870392e-01\\
              1.046611e+00, 8.868091e-01\\
              1.096450e+00, 8.866037e-01\\
              1.146288e+00, 8.864220e-01\\
              1.196127e+00, 8.862530e-01\\
              1.245966e+00, 8.861014e-01\\
              1.295804e+00, 8.859529e-01\\
              1.345643e+00, 8.858156e-01\\
              1.395482e+00, 8.856844e-01\\
              1.445320e+00, 8.855548e-01\\
              1.495159e+00, 8.854238e-01\\
              1.544997e+00, 8.853160e-01\\
              1.594836e+00, 8.852008e-01\\
              1.644675e+00, 8.851050e-01\\
              1.694513e+00, 8.850247e-01\\
              1.744352e+00, 8.849396e-01\\
              1.794191e+00, 8.848653e-01\\
              1.844029e+00, 8.847900e-01\\
              1.893868e+00, 8.847126e-01\\
              1.943707e+00, 8.846449e-01\\
              1.993545e+00, 8.845734e-01\\
              2.043384e+00, 8.845085e-01\\
              2.093222e+00, 8.844517e-01\\
              2.143061e+00, 8.843872e-01\\
              2.192900e+00, 8.843332e-01\\
              2.242738e+00, 8.842751e-01\\
              2.292577e+00, 8.842137e-01\\
              2.342416e+00, 8.841579e-01\\
              2.392254e+00, 8.841051e-01\\
              2.442093e+00, 8.840549e-01\\
              2.491931e+00, 8.840052e-01\\
              2.541770e+00, 8.839633e-01\\
              2.591609e+00, 8.839152e-01\\
              2.641447e+00, 8.838738e-01\\
              2.691286e+00, 8.838317e-01\\
              2.741125e+00, 8.837931e-01\\
              2.790963e+00, 8.837563e-01\\
              2.840802e+00, 8.837182e-01\\
              2.890640e+00, 8.836828e-01\\
              2.940479e+00, 8.836522e-01\\
              2.990318e+00, 8.836187e-01\\
              3.040156e+00, 8.835857e-01\\
              3.089995e+00, 8.835582e-01\\
              3.139834e+00, 8.835274e-01\\
              3.189672e+00, 8.834976e-01\\
              3.239511e+00, 8.834695e-01\\
              3.289349e+00, 8.834403e-01\\
              3.339188e+00, 8.834088e-01\\
              3.389027e+00, 8.833807e-01\\
              3.438865e+00, 8.833530e-01\\
              3.488704e+00, 8.833260e-01\\
              3.538543e+00, 8.832969e-01\\
              3.588381e+00, 8.832727e-01\\
              3.638220e+00, 8.832495e-01\\
              3.688058e+00, 8.832238e-01\\
              3.737897e+00, 8.832019e-01\\
              3.787736e+00, 8.831795e-01\\
              3.837574e+00, 8.831559e-01\\
              3.887413e+00, 8.831348e-01\\
              3.937252e+00, 8.831139e-01\\
              3.987090e+00, 8.830941e-01\\
              4.036929e+00, 8.830744e-01\\
              4.086768e+00, 8.830548e-01\\
              4.136606e+00, 8.830381e-01\\
              4.186445e+00, 8.830197e-01\\
              4.236283e+00, 8.830005e-01\\
              4.286122e+00, 8.829826e-01\\
              4.335961e+00, 8.829643e-01\\
              4.385799e+00, 8.829457e-01\\
              4.435638e+00, 8.829283e-01\\
              4.485477e+00, 8.829120e-01\\
              4.535315e+00, 8.828951e-01\\
              4.585154e+00, 8.828783e-01\\
              4.634992e+00, 8.828636e-01\\
              4.684831e+00, 8.828481e-01\\
              4.734670e+00, 8.828308e-01\\
              4.784508e+00, 8.828143e-01\\
              4.834347e+00, 8.827976e-01\\
              4.884186e+00, 8.827806e-01\\
              4.934024e+00, 8.827624e-01\\
              4.983863e+00, 8.827466e-01\\
              5.033701e+00, 8.827315e-01\\
              5.083540e+00, 8.827154e-01\\
              5.133379e+00, 8.827033e-01\\
              5.183217e+00, 8.826911e-01\\
              5.233056e+00, 8.826771e-01\\
              5.282895e+00, 8.826649e-01\\
              5.332733e+00, 8.826539e-01\\
              5.382572e+00, 8.826413e-01\\
              5.432410e+00, 8.826297e-01\\
              5.482249e+00, 8.826177e-01\\
              5.532088e+00, 8.826056e-01\\
              5.581926e+00, 8.825936e-01\\
              5.631765e+00, 8.825813e-01\\
              5.681604e+00, 8.825697e-01\\
              5.731442e+00, 8.825579e-01\\
              5.781281e+00, 8.825454e-01\\
              5.831120e+00, 8.825350e-01\\
              5.880958e+00, 8.825246e-01\\
              5.930797e+00, 8.825124e-01\\
              5.980635e+00, 8.825013e-01\\
              6.030474e+00, 8.824911e-01\\
              6.080313e+00, 8.824807e-01\\
              6.130151e+00, 8.824697e-01\\
              6.179990e+00, 8.824600e-01\\
              6.229829e+00, 8.824494e-01\\
              6.279667e+00, 8.824382e-01\\
              6.329506e+00, 8.824283e-01\\
              6.379344e+00, 8.824186e-01\\
              6.429183e+00, 8.824078e-01\\
              6.479022e+00, 8.823975e-01\\
              6.528860e+00, 8.823882e-01\\
              6.578699e+00, 8.823794e-01\\
              6.628538e+00, 8.823700e-01\\
              6.678376e+00, 8.823612e-01\\
              6.728215e+00, 8.823531e-01\\
              6.778053e+00, 8.823447e-01\\
              6.827892e+00, 8.823368e-01\\
              6.877731e+00, 8.823289e-01\\
              6.927569e+00, 8.823199e-01\\
              6.977408e+00, 8.823100e-01\\
              7.027247e+00, 8.823013e-01\\
              7.077085e+00, 8.822922e-01\\
              7.126924e+00, 8.822825e-01\\
              7.176762e+00, 8.822733e-01\\
              7.226601e+00, 8.822651e-01\\
              7.276440e+00, 8.822564e-01\\
              7.326278e+00, 8.822476e-01\\
              7.376117e+00, 8.822399e-01\\
              7.425956e+00, 8.822315e-01\\
              7.475794e+00, 8.822225e-01\\
              7.525633e+00, 8.822154e-01\\
              7.575471e+00, 8.822080e-01\\
              7.625310e+00, 8.822002e-01\\
              7.675149e+00, 8.821929e-01\\
              7.724987e+00, 8.821857e-01\\
              7.774826e+00, 8.821791e-01\\
              7.824665e+00, 8.821724e-01\\
              7.874503e+00, 8.821655e-01\\
              7.924342e+00, 8.821589e-01\\
              7.974181e+00, 8.821522e-01\\
              8.024019e+00, 8.821457e-01\\
              8.073858e+00, 8.821395e-01\\
              8.123696e+00, 8.821326e-01\\
              8.173535e+00, 8.821253e-01\\
              8.223374e+00, 8.821189e-01\\
              8.273212e+00, 8.821133e-01\\
              8.323051e+00, 8.821065e-01\\
              8.372890e+00, 8.820997e-01\\
              8.422728e+00, 8.820939e-01\\
              8.472567e+00, 8.820874e-01\\
              8.522405e+00, 8.820802e-01\\
              8.572244e+00, 8.820738e-01\\
              8.622083e+00, 8.820671e-01\\
              8.671921e+00, 8.820599e-01\\
              8.721760e+00, 8.820532e-01\\
              8.771599e+00, 8.820472e-01\\
              8.821437e+00, 8.820411e-01\\
              8.871276e+00, 8.820344e-01\\
              8.921114e+00, 8.820282e-01\\
              8.970953e+00, 8.820229e-01\\
              9.020792e+00, 8.820170e-01\\
              9.070630e+00, 8.820111e-01\\
              9.120469e+00, 8.820056e-01\\
              9.170308e+00, 8.819995e-01\\
              9.220146e+00, 8.819939e-01\\
              9.269985e+00, 8.819887e-01\\
              9.319823e+00, 8.819828e-01\\
              9.369662e+00, 8.819769e-01\\
              9.419501e+00, 8.819718e-01\\
              9.469339e+00, 8.819667e-01\\
              9.519178e+00, 8.819615e-01\\
              9.569017e+00, 8.819562e-01\\
              9.618855e+00, 8.819510e-01\\
              9.668694e+00, 8.819461e-01\\
              9.718533e+00, 8.819415e-01\\
              9.768371e+00, 8.819366e-01\\
              9.818210e+00, 8.819320e-01\\
              9.868048e+00, 8.819277e-01\\
              9.917887e+00, 8.819228e-01\\
              9.967726e+00, 8.819185e-01\\
              1.000000e+01, 8.819160e-01\\
          };
        \legend{Central ($\alpha = 0$)\\Upwind ($\alpha = 1$)\\};
      \end{axis}
    \end{tikzpicture}%
  \end{minipage}
  \caption{Energy dissipation in a pseudorandomly-heterogeneous curvilinear box
  comparing SFIM with interpolated geometry using the full-side and split-side
  mortar elements with the upwind ($\alpha = 1$) and central ($\alpha = 0$)
  flux.\label{fig:energy:skew}}
\end{figure}
\fref{fig:energy:skew} shows the energy in the solution versus time for SFIM
with both full-side and split-side mortar elements using both the upwind and
central fluxes when the coordinate transform~\eref{eqn:curve} is interpolated.
As can be seen, the upwind schemes quickly
dissipate energy of the unresolved modes in the solution and then remain stable.
The central method preserves the
initial energy; there is an $\sim 10^{-4}\%$ energy lose for both central
schemes but this is likely from dissipation of the Runge-Kutta scheme. This
long time energy plot also suggests that with an upwind flux SFIM with
split-side mortar elements is slightly more dissipative than with full-side
mortar elements, with the split-side mortar elements dissipating $13\%$ of the
initial energy as compared with $18\%$ with full-side mortar elements.  That
said the impacts of this on a real problem would depend on the
distribution of the energy across the eigenmodes of the solution. In
\fref{fig:energy:skew} only the interpolate curvilinear transform results are
shown, and similar results are seen with the watertight and continuous metric
handling of the geometry.

\begin{figure}
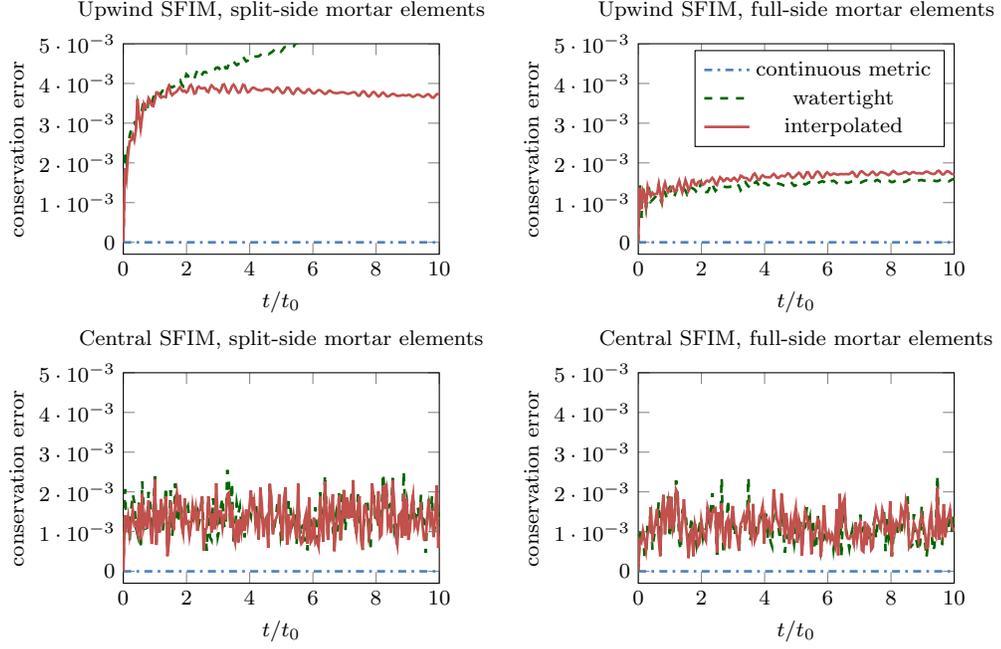

  \begin{minipage}{0.45\textwidth}
%
  \end{minipage}
  \caption{Conservation error for both the full-side and split-side mortar with
  both upwinding ($\alpha = 1$) and central flux ($\alpha = 0$) for the
  continuous metric term, watertight, and interpolated handling of geometry.
  The conservation error is defined by~\eref{eqn:con:err}.\label{fig:conservative:box} }
\end{figure}
In order to test the conservation properties of SFIM, we run the same test
problem using the the interpolated, watertight, and continuous metric handling
of geometry with both types of mortar elements using the central
and upwind fluxes. The conservation error for all these runs is shown in
\fref{fig:conservative:box}. We define the conservation error as
\begin{align}
  \label{eqn:con:err}
  \mbox{conservation error} =
  \sum_{i}e_{\rho v_{i}} +
  \sum_{i,j}e_{\epsilon_{kl}},
\end{align}
with the error in each component defined as
\begin{align}
  e_{\rho v_{i}} &= \left|\frac{\int_{\Omega} \rho (v_{i} - v_{i}^{0})}
                          {\int_{\Omega} \rho v_{i}^{0}}\right|,&
  e_{\epsilon_{ij}} &=
    \left|\frac{\int_{\Omega} S_{ijkl} (\sigma_{kl} - \sigma_{kl}^{0})}
               {\int_{\Omega} S_{ijkl} \sigma_{kl}^{0}}\right|.
\end{align}
Here $v_{i}^{0}$ and $\sigma_{kl}^{0}$ are the initial values and the integrals
are evaluated using the above outlined quadrature approximations. Thus, the
conservation error is the sum of the normalized conservation error for each
conserved variable. As can be seen when the metric terms are made continuous the
scheme is conservative.

\begin{figure}
  \begin{minipage}{0.45\textwidth}
    \begin{tikzpicture}
      \begin{axis}[%
        width=0.75\textwidth,
        height=0.5\textwidth,
        scale only axis,
        xmin=0,xmax=10,
        ymin=-1e-4,ymax=4.5e-3,
        ytick={0,1e-3,2e-3,3e-3,4e-3,5e-3},
        legend style={at={(0.3,0.6)},anchor=west},
        title={Upwind SFIM, split-side mortar elements},
        ylabel={conservation error}, xlabel={$t / t_{0}$},
        scaled ticks=false,
        ]
          \addplot [color=blu,line width=1,dashdotted]
            table[col sep=comma, row sep=crcr]{%
              0.000000e+00, 0.000000e+00\\
              4.983863e-02, 1.862767e-03\\
              9.967726e-02, 1.605702e-03\\
              1.495159e-01, 2.192159e-03\\
              1.993545e-01, 2.538455e-03\\
              2.491931e-01, 2.568725e-03\\
              2.990318e-01, 2.702544e-03\\
              3.488704e-01, 2.566996e-03\\
              3.987090e-01, 2.829807e-03\\
              4.485477e-01, 3.607648e-03\\
              4.983863e-01, 3.176872e-03\\
              5.482249e-01, 2.862528e-03\\
              5.980635e-01, 3.178069e-03\\
              6.479022e-01, 3.482622e-03\\
              6.977408e-01, 3.383720e-03\\
              7.475794e-01, 3.411476e-03\\
              7.974181e-01, 3.630342e-03\\
              8.472567e-01, 3.568139e-03\\
              8.970953e-01, 3.448280e-03\\
              9.469339e-01, 3.428787e-03\\
              9.967726e-01, 3.582393e-03\\
              1.046611e+00, 3.775173e-03\\
              1.096450e+00, 3.769263e-03\\
              1.146288e+00, 3.682701e-03\\
              1.196127e+00, 3.643076e-03\\
              1.245966e+00, 3.670850e-03\\
              1.295804e+00, 3.668415e-03\\
              1.345643e+00, 3.654633e-03\\
              1.395482e+00, 3.859416e-03\\
              1.445320e+00, 3.933532e-03\\
              1.495159e+00, 3.759817e-03\\
              1.544997e+00, 3.704454e-03\\
              1.594836e+00, 3.749310e-03\\
              1.644675e+00, 3.771167e-03\\
              1.694513e+00, 3.801864e-03\\
              1.744352e+00, 3.862501e-03\\
              1.794191e+00, 3.908067e-03\\
              1.844029e+00, 3.831252e-03\\
              1.893868e+00, 3.798459e-03\\
              1.943707e+00, 3.815682e-03\\
              1.993545e+00, 3.773730e-03\\
              2.043384e+00, 3.810477e-03\\
              2.093222e+00, 3.925631e-03\\
              2.143061e+00, 3.964591e-03\\
              2.192900e+00, 3.879895e-03\\
              2.242738e+00, 3.744995e-03\\
              2.292577e+00, 3.760898e-03\\
              2.342416e+00, 3.891765e-03\\
              2.392254e+00, 3.921929e-03\\
              2.442093e+00, 3.882274e-03\\
              2.491931e+00, 3.873475e-03\\
              2.541770e+00, 3.889215e-03\\
              2.591609e+00, 3.881596e-03\\
              2.641447e+00, 3.804981e-03\\
              2.691286e+00, 3.782716e-03\\
              2.741125e+00, 3.886670e-03\\
              2.790963e+00, 3.953185e-03\\
              2.840802e+00, 3.914928e-03\\
              2.890640e+00, 3.865534e-03\\
              2.940479e+00, 3.863199e-03\\
              2.990318e+00, 3.838194e-03\\
              3.040156e+00, 3.780309e-03\\
              3.089995e+00, 3.836991e-03\\
              3.139834e+00, 3.964281e-03\\
              3.189672e+00, 3.978938e-03\\
              3.239511e+00, 3.885122e-03\\
              3.289349e+00, 3.784099e-03\\
              3.339188e+00, 3.760464e-03\\
              3.389027e+00, 3.831378e-03\\
              3.438865e+00, 3.929350e-03\\
              3.488704e+00, 3.971899e-03\\
              3.538543e+00, 3.918938e-03\\
              3.588381e+00, 3.825909e-03\\
              3.638220e+00, 3.767764e-03\\
              3.688058e+00, 3.774504e-03\\
              3.737897e+00, 3.867198e-03\\
              3.787736e+00, 3.969946e-03\\
              3.837574e+00, 3.951841e-03\\
              3.887413e+00, 3.843381e-03\\
              3.937252e+00, 3.775914e-03\\
              3.987090e+00, 3.792325e-03\\
              4.036929e+00, 3.848731e-03\\
              4.086768e+00, 3.890179e-03\\
              4.136606e+00, 3.893883e-03\\
              4.186445e+00, 3.867788e-03\\
              4.236283e+00, 3.842597e-03\\
              4.286122e+00, 3.837597e-03\\
              4.335961e+00, 3.828440e-03\\
              4.385799e+00, 3.820323e-03\\
              4.435638e+00, 3.840959e-03\\
              4.485477e+00, 3.861709e-03\\
              4.535315e+00, 3.861175e-03\\
              4.585154e+00, 3.859989e-03\\
              4.634992e+00, 3.852179e-03\\
              4.684831e+00, 3.812925e-03\\
              4.734670e+00, 3.776934e-03\\
              4.784508e+00, 3.808613e-03\\
              4.834347e+00, 3.878531e-03\\
              4.884186e+00, 3.900302e-03\\
              4.934024e+00, 3.860394e-03\\
              4.983863e+00, 3.798486e-03\\
              5.033701e+00, 3.754741e-03\\
              5.083540e+00, 3.768594e-03\\
              5.133379e+00, 3.836614e-03\\
              5.183217e+00, 3.896596e-03\\
              5.233056e+00, 3.894008e-03\\
              5.282895e+00, 3.831501e-03\\
              5.332733e+00, 3.757581e-03\\
              5.382572e+00, 3.733994e-03\\
              5.432410e+00, 3.786978e-03\\
              5.482249e+00, 3.861557e-03\\
              5.532088e+00, 3.877773e-03\\
              5.581926e+00, 3.838447e-03\\
              5.631765e+00, 3.795853e-03\\
              5.681604e+00, 3.765227e-03\\
              5.731442e+00, 3.756371e-03\\
              5.781281e+00, 3.792396e-03\\
              5.831120e+00, 3.847067e-03\\
              5.880958e+00, 3.856909e-03\\
              5.930797e+00, 3.812388e-03\\
              5.980635e+00, 3.763414e-03\\
              6.030474e+00, 3.746209e-03\\
              6.080313e+00, 3.765635e-03\\
              6.130151e+00, 3.807259e-03\\
              6.179990e+00, 3.835361e-03\\
              6.229829e+00, 3.827824e-03\\
              6.279667e+00, 3.797460e-03\\
              6.329506e+00, 3.760520e-03\\
              6.379344e+00, 3.734841e-03\\
              6.429183e+00, 3.749691e-03\\
              6.479022e+00, 3.797897e-03\\
              6.528860e+00, 3.822739e-03\\
              6.578699e+00, 3.800496e-03\\
              6.628538e+00, 3.771589e-03\\
              6.678376e+00, 3.762553e-03\\
              6.728215e+00, 3.758475e-03\\
              6.778053e+00, 3.756037e-03\\
              6.827892e+00, 3.764788e-03\\
              6.877731e+00, 3.776185e-03\\
              6.927569e+00, 3.779864e-03\\
              6.977408e+00, 3.775420e-03\\
              7.027247e+00, 3.760841e-03\\
              7.077085e+00, 3.743340e-03\\
              7.126924e+00, 3.740702e-03\\
              7.176762e+00, 3.753056e-03\\
              7.226601e+00, 3.767202e-03\\
              7.276440e+00, 3.780393e-03\\
              7.326278e+00, 3.779895e-03\\
              7.376117e+00, 3.742789e-03\\
              7.425956e+00, 3.694018e-03\\
              7.475794e+00, 3.695028e-03\\
              7.525633e+00, 3.751279e-03\\
              7.575471e+00, 3.801714e-03\\
              7.625310e+00, 3.799686e-03\\
              7.675149e+00, 3.750366e-03\\
              7.724987e+00, 3.691476e-03\\
              7.774826e+00, 3.670751e-03\\
              7.824665e+00, 3.707381e-03\\
              7.874503e+00, 3.763948e-03\\
              7.924342e+00, 3.786611e-03\\
              7.974181e+00, 3.759259e-03\\
              8.024019e+00, 3.705301e-03\\
              8.073858e+00, 3.670216e-03\\
              8.123696e+00, 3.691793e-03\\
              8.173535e+00, 3.748747e-03\\
              8.223374e+00, 3.772482e-03\\
              8.273212e+00, 3.737521e-03\\
              8.323051e+00, 3.689301e-03\\
              8.372890e+00, 3.672254e-03\\
              8.422728e+00, 3.688094e-03\\
              8.472567e+00, 3.718748e-03\\
              8.522405e+00, 3.740019e-03\\
              8.572244e+00, 3.732050e-03\\
              8.622083e+00, 3.705187e-03\\
              8.671921e+00, 3.687978e-03\\
              8.721760e+00, 3.688298e-03\\
              8.771599e+00, 3.694496e-03\\
              8.821437e+00, 3.706608e-03\\
              8.871276e+00, 3.708283e-03\\
              8.921114e+00, 3.696463e-03\\
              8.970953e+00, 3.696839e-03\\
              9.020792e+00, 3.714342e-03\\
              9.070630e+00, 3.711411e-03\\
              9.120469e+00, 3.680762e-03\\
              9.170308e+00, 3.663441e-03\\
              9.220146e+00, 3.680306e-03\\
              9.269985e+00, 3.707905e-03\\
              9.319823e+00, 3.720479e-03\\
              9.369662e+00, 3.712186e-03\\
              9.419501e+00, 3.687196e-03\\
              9.469339e+00, 3.663653e-03\\
              9.519178e+00, 3.666266e-03\\
              9.569017e+00, 3.696040e-03\\
              9.618855e+00, 3.725999e-03\\
              9.668694e+00, 3.726696e-03\\
              9.718533e+00, 3.690067e-03\\
              9.768371e+00, 3.645393e-03\\
              9.818210e+00, 3.641100e-03\\
              9.868048e+00, 3.685146e-03\\
              9.917887e+00, 3.729430e-03\\
              9.967726e+00, 3.733721e-03\\
              1.000000e+01, 3.717472e-03\\
          };
          \addplot [color=red,line width=1,solid]
            table[col sep=comma, row sep=crcr]{%
              0.000000e+00, 0.000000e+00\\
              4.983863e-02, 4.440892e-16\\
              9.967726e-02, 2.220446e-16\\
              1.495159e-01, 3.330669e-16\\
              1.993545e-01, 0.000000e+00\\
              2.491931e-01, 5.551115e-16\\
              2.990318e-01, 5.551115e-16\\
              3.488704e-01, 5.551115e-16\\
              3.987090e-01, 3.330669e-16\\
              4.485477e-01, 4.440892e-16\\
              4.983863e-01, 3.330669e-16\\
              5.482249e-01, 3.330669e-16\\
              5.980635e-01, 2.220446e-16\\
              6.479022e-01, 2.220446e-16\\
              6.977408e-01, 5.551115e-16\\
              7.475794e-01, 7.771561e-16\\
              7.974181e-01, 4.440892e-16\\
              8.472567e-01, 4.440892e-16\\
              8.970953e-01, 6.661338e-16\\
              9.469339e-01, 6.661338e-16\\
              9.967726e-01, 8.881784e-16\\
              1.046611e+00, 8.881784e-16\\
              1.096450e+00, 8.881784e-16\\
              1.146288e+00, 6.661338e-16\\
              1.196127e+00, 4.440892e-16\\
              1.245966e+00, 5.551115e-16\\
              1.295804e+00, 6.661338e-16\\
              1.345643e+00, 4.440892e-16\\
              1.395482e+00, 6.661338e-16\\
              1.445320e+00, 5.551115e-16\\
              1.495159e+00, 7.771561e-16\\
              1.544997e+00, 5.551115e-16\\
              1.594836e+00, 6.661338e-16\\
              1.644675e+00, 5.551115e-16\\
              1.694513e+00, 5.551115e-16\\
              1.744352e+00, 6.661338e-16\\
              1.794191e+00, 4.440892e-16\\
              1.844029e+00, 6.661338e-16\\
              1.893868e+00, 6.661338e-16\\
              1.943707e+00, 6.661338e-16\\
              1.993545e+00, 5.551115e-16\\
              2.043384e+00, 6.661338e-16\\
              2.093222e+00, 4.440892e-16\\
              2.143061e+00, 6.661338e-16\\
              2.192900e+00, 7.771561e-16\\
              2.242738e+00, 1.110223e-15\\
              2.292577e+00, 8.881784e-16\\
              2.342416e+00, 1.110223e-15\\
              2.392254e+00, 1.110223e-15\\
              2.442093e+00, 1.332268e-15\\
              2.491931e+00, 1.554312e-15\\
              2.541770e+00, 1.443290e-15\\
              2.591609e+00, 1.554312e-15\\
              2.641447e+00, 1.443290e-15\\
              2.691286e+00, 1.443290e-15\\
              2.741125e+00, 1.221245e-15\\
              2.790963e+00, 1.110223e-15\\
              2.840802e+00, 9.992007e-16\\
              2.890640e+00, 1.221245e-15\\
              2.940479e+00, 1.221245e-15\\
              2.990318e+00, 1.332268e-15\\
              3.040156e+00, 1.332268e-15\\
              3.089995e+00, 1.221245e-15\\
              3.139834e+00, 1.332268e-15\\
              3.189672e+00, 1.221245e-15\\
              3.239511e+00, 1.443290e-15\\
              3.289349e+00, 1.332268e-15\\
              3.339188e+00, 1.110223e-15\\
              3.389027e+00, 7.771561e-16\\
              3.438865e+00, 1.221245e-15\\
              3.488704e+00, 1.221245e-15\\
              3.538543e+00, 1.332268e-15\\
              3.588381e+00, 1.221245e-15\\
              3.638220e+00, 1.332268e-15\\
              3.688058e+00, 1.665335e-15\\
              3.737897e+00, 1.554312e-15\\
              3.787736e+00, 1.665335e-15\\
              3.837574e+00, 1.887379e-15\\
              3.887413e+00, 1.887379e-15\\
              3.937252e+00, 2.109424e-15\\
              3.987090e+00, 2.442491e-15\\
              4.036929e+00, 2.109424e-15\\
              4.086768e+00, 2.331468e-15\\
              4.136606e+00, 2.331468e-15\\
              4.186445e+00, 2.553513e-15\\
              4.236283e+00, 2.553513e-15\\
              4.286122e+00, 2.775558e-15\\
              4.335961e+00, 2.886580e-15\\
              4.385799e+00, 2.553513e-15\\
              4.435638e+00, 2.997602e-15\\
              4.485477e+00, 2.997602e-15\\
              4.535315e+00, 2.886580e-15\\
              4.585154e+00, 3.108624e-15\\
              4.634992e+00, 2.553513e-15\\
              4.684831e+00, 3.219647e-15\\
              4.734670e+00, 2.886580e-15\\
              4.784508e+00, 2.331468e-15\\
              4.834347e+00, 2.553513e-15\\
              4.884186e+00, 2.886580e-15\\
              4.934024e+00, 2.664535e-15\\
              4.983863e+00, 2.331468e-15\\
              5.033701e+00, 2.331468e-15\\
              5.083540e+00, 2.442491e-15\\
              5.133379e+00, 2.442491e-15\\
              5.183217e+00, 2.664535e-15\\
              5.233056e+00, 2.775558e-15\\
              5.282895e+00, 2.109424e-15\\
              5.332733e+00, 2.331468e-15\\
              5.382572e+00, 1.887379e-15\\
              5.432410e+00, 2.109424e-15\\
              5.482249e+00, 2.109424e-15\\
              5.532088e+00, 1.776357e-15\\
              5.581926e+00, 1.887379e-15\\
              5.631765e+00, 1.665335e-15\\
              5.681604e+00, 1.998401e-15\\
              5.731442e+00, 1.776357e-15\\
              5.781281e+00, 1.665335e-15\\
              5.831120e+00, 1.887379e-15\\
              5.880958e+00, 1.776357e-15\\
              5.930797e+00, 1.998401e-15\\
              5.980635e+00, 1.998401e-15\\
              6.030474e+00, 1.665335e-15\\
              6.080313e+00, 1.776357e-15\\
              6.130151e+00, 1.776357e-15\\
              6.179990e+00, 1.887379e-15\\
              6.229829e+00, 1.554312e-15\\
              6.279667e+00, 1.665335e-15\\
              6.329506e+00, 1.776357e-15\\
              6.379344e+00, 1.887379e-15\\
              6.429183e+00, 1.443290e-15\\
              6.479022e+00, 1.554312e-15\\
              6.528860e+00, 1.887379e-15\\
              6.578699e+00, 1.776357e-15\\
              6.628538e+00, 1.776357e-15\\
              6.678376e+00, 1.776357e-15\\
              6.728215e+00, 1.665335e-15\\
              6.778053e+00, 1.887379e-15\\
              6.827892e+00, 1.887379e-15\\
              6.877731e+00, 2.109424e-15\\
              6.927569e+00, 1.998401e-15\\
              6.977408e+00, 2.331468e-15\\
              7.027247e+00, 2.220446e-15\\
              7.077085e+00, 2.220446e-15\\
              7.126924e+00, 2.442491e-15\\
              7.176762e+00, 2.442491e-15\\
              7.226601e+00, 2.553513e-15\\
              7.276440e+00, 2.220446e-15\\
              7.326278e+00, 1.998401e-15\\
              7.376117e+00, 2.331468e-15\\
              7.425956e+00, 1.998401e-15\\
              7.475794e+00, 1.998401e-15\\
              7.525633e+00, 2.220446e-15\\
              7.575471e+00, 2.331468e-15\\
              7.625310e+00, 1.998401e-15\\
              7.675149e+00, 2.220446e-15\\
              7.724987e+00, 1.887379e-15\\
              7.774826e+00, 1.665335e-15\\
              7.824665e+00, 1.887379e-15\\
              7.874503e+00, 1.998401e-15\\
              7.924342e+00, 1.887379e-15\\
              7.974181e+00, 2.220446e-15\\
              8.024019e+00, 1.665335e-15\\
              8.073858e+00, 1.998401e-15\\
              8.123696e+00, 2.109424e-15\\
              8.173535e+00, 2.331468e-15\\
              8.223374e+00, 2.109424e-15\\
              8.273212e+00, 2.109424e-15\\
              8.323051e+00, 2.442491e-15\\
              8.372890e+00, 2.442491e-15\\
              8.422728e+00, 2.220446e-15\\
              8.472567e+00, 2.220446e-15\\
              8.522405e+00, 2.553513e-15\\
              8.572244e+00, 1.998401e-15\\
              8.622083e+00, 1.887379e-15\\
              8.671921e+00, 1.776357e-15\\
              8.721760e+00, 2.109424e-15\\
              8.771599e+00, 2.331468e-15\\
              8.821437e+00, 2.220446e-15\\
              8.871276e+00, 2.220446e-15\\
              8.921114e+00, 2.220446e-15\\
              8.970953e+00, 2.442491e-15\\
              9.020792e+00, 1.998401e-15\\
              9.070630e+00, 2.109424e-15\\
              9.120469e+00, 1.887379e-15\\
              9.170308e+00, 2.220446e-15\\
              9.220146e+00, 1.998401e-15\\
              9.269985e+00, 1.776357e-15\\
              9.319823e+00, 1.776357e-15\\
              9.369662e+00, 1.998401e-15\\
              9.419501e+00, 2.220446e-15\\
              9.469339e+00, 2.109424e-15\\
              9.519178e+00, 2.331468e-15\\
              9.569017e+00, 1.887379e-15\\
              9.618855e+00, 2.220446e-15\\
              9.668694e+00, 2.109424e-15\\
              9.718533e+00, 2.220446e-15\\
              9.768371e+00, 2.331468e-15\\
              9.818210e+00, 2.220446e-15\\
              9.868048e+00, 2.331468e-15\\
              9.917887e+00, 1.998401e-15\\
              9.967726e+00, 1.776357e-15\\
              1.000000e+01, 2.109424e-15\\
          };
        \legend{strain error\\momentum error\\};
      \end{axis}
    \end{tikzpicture}%
  \end{minipage}
  \hfill
  \begin{minipage}{0.45\textwidth}
    \begin{tikzpicture}
      \begin{axis}[%
        width=0.75\textwidth,
        height=0.5\textwidth,
        scale only axis,
        xmin=0,xmax=10,
        ymin=-3e-4,ymax=5e-3,
        legend pos= north east,
        title={Central SFIM, split-side mortar elements},
        ylabel={conservation error}, xlabel={$t / t_{0}$},
        ytick={0,1e-3,2e-3,3e-3,4e-3,5e-3},
        scaled ticks=false,
        ]
          \addplot [color=blu,line width=1,dashdotted]
            table[col sep=comma, row sep=crcr]{%
              0.000000e+00, 0.000000e+00\\
              4.983863e-02, 1.409499e-03\\
              9.967726e-02, 1.139821e-03\\
              1.495159e-01, 1.381731e-03\\
              1.993545e-01, 9.332857e-04\\
              2.491931e-01, 1.239485e-03\\
              2.990318e-01, 1.538130e-03\\
              3.488704e-01, 1.170396e-03\\
              3.987090e-01, 1.222340e-03\\
              4.485477e-01, 9.880876e-04\\
              4.983863e-01, 1.696482e-03\\
              5.482249e-01, 8.054774e-04\\
              5.980635e-01, 1.951551e-03\\
              6.479022e-01, 1.139117e-03\\
              6.977408e-01, 1.381568e-03\\
              7.475794e-01, 1.095751e-03\\
              7.974181e-01, 1.781310e-03\\
              8.472567e-01, 1.216418e-03\\
              8.970953e-01, 6.887492e-04\\
              9.469339e-01, 1.147997e-03\\
              9.967726e-01, 2.320944e-03\\
              1.046611e+00, 6.630822e-04\\
              1.096450e+00, 1.539449e-03\\
              1.146288e+00, 9.748926e-04\\
              1.196127e+00, 1.820844e-03\\
              1.245966e+00, 1.233433e-03\\
              1.295804e+00, 1.394963e-03\\
              1.345643e+00, 1.893584e-03\\
              1.395482e+00, 3.802052e-04\\
              1.445320e+00, 7.461630e-04\\
              1.495159e+00, 1.759034e-03\\
              1.544997e+00, 1.368409e-03\\
              1.594836e+00, 1.035627e-03\\
              1.644675e+00, 1.700389e-03\\
              1.694513e+00, 1.969293e-03\\
              1.744352e+00, 1.490097e-03\\
              1.794191e+00, 1.924140e-03\\
              1.844029e+00, 1.152758e-03\\
              1.893868e+00, 9.430170e-04\\
              1.943707e+00, 1.832281e-03\\
              1.993545e+00, 1.215140e-03\\
              2.043384e+00, 1.250023e-03\\
              2.093222e+00, 8.793414e-04\\
              2.143061e+00, 1.144889e-03\\
              2.192900e+00, 1.648985e-03\\
              2.242738e+00, 1.360918e-03\\
              2.292577e+00, 1.596333e-03\\
              2.342416e+00, 1.021330e-03\\
              2.392254e+00, 1.502903e-03\\
              2.442093e+00, 1.589237e-03\\
              2.491931e+00, 1.878275e-03\\
              2.541770e+00, 5.195279e-04\\
              2.591609e+00, 9.943645e-04\\
              2.641447e+00, 1.108192e-03\\
              2.691286e+00, 1.889614e-03\\
              2.741125e+00, 1.471780e-03\\
              2.790963e+00, 9.768058e-04\\
              2.840802e+00, 1.452815e-03\\
              2.890640e+00, 1.058064e-03\\
              2.940479e+00, 2.219792e-03\\
              2.990318e+00, 1.288309e-03\\
              3.040156e+00, 1.509162e-03\\
              3.089995e+00, 1.516158e-03\\
              3.139834e+00, 1.097512e-03\\
              3.189672e+00, 1.392605e-03\\
              3.239511e+00, 1.345119e-03\\
              3.289349e+00, 1.932237e-03\\
              3.339188e+00, 1.979047e-03\\
              3.389027e+00, 1.103246e-03\\
              3.438865e+00, 1.690034e-03\\
              3.488704e+00, 9.265802e-04\\
              3.538543e+00, 7.328737e-04\\
              3.588381e+00, 1.301871e-03\\
              3.638220e+00, 7.273134e-04\\
              3.688058e+00, 1.306642e-03\\
              3.737897e+00, 9.938150e-04\\
              3.787736e+00, 1.398080e-03\\
              3.837574e+00, 1.025820e-03\\
              3.887413e+00, 1.553450e-03\\
              3.937252e+00, 1.192690e-03\\
              3.987090e+00, 1.584171e-03\\
              4.036929e+00, 2.036936e-03\\
              4.086768e+00, 1.610922e-03\\
              4.136606e+00, 1.782758e-03\\
              4.186445e+00, 1.159138e-03\\
              4.236283e+00, 1.386438e-03\\
              4.286122e+00, 7.970769e-04\\
              4.335961e+00, 2.110248e-03\\
              4.385799e+00, 1.676301e-03\\
              4.435638e+00, 8.626334e-04\\
              4.485477e+00, 1.297620e-03\\
              4.535315e+00, 1.372394e-03\\
              4.585154e+00, 5.926828e-04\\
              4.634992e+00, 2.203085e-03\\
              4.684831e+00, 1.870048e-03\\
              4.734670e+00, 9.317201e-04\\
              4.784508e+00, 1.258451e-03\\
              4.834347e+00, 1.324349e-03\\
              4.884186e+00, 1.439316e-03\\
              4.934024e+00, 1.935563e-03\\
              4.983863e+00, 1.229408e-03\\
              5.033701e+00, 7.773924e-04\\
              5.083540e+00, 1.816711e-03\\
              5.133379e+00, 2.118606e-03\\
              5.183217e+00, 6.703875e-04\\
              5.233056e+00, 1.436901e-03\\
              5.282895e+00, 9.915514e-04\\
              5.332733e+00, 1.230801e-03\\
              5.382572e+00, 1.439874e-03\\
              5.432410e+00, 1.124846e-03\\
              5.482249e+00, 8.151337e-04\\
              5.532088e+00, 2.209687e-03\\
              5.581926e+00, 8.748244e-04\\
              5.631765e+00, 6.935638e-04\\
              5.681604e+00, 7.488760e-04\\
              5.731442e+00, 1.272070e-03\\
              5.781281e+00, 1.449137e-03\\
              5.831120e+00, 6.920859e-04\\
              5.880958e+00, 1.855821e-03\\
              5.930797e+00, 7.330103e-04\\
              5.980635e+00, 8.048585e-04\\
              6.030474e+00, 1.350499e-03\\
              6.080313e+00, 7.097800e-04\\
              6.130151e+00, 7.790951e-04\\
              6.179990e+00, 6.229948e-04\\
              6.229829e+00, 1.884496e-03\\
              6.279667e+00, 1.075522e-03\\
              6.329506e+00, 7.440455e-04\\
              6.379344e+00, 2.266229e-03\\
              6.429183e+00, 1.476640e-03\\
              6.479022e+00, 7.880522e-04\\
              6.528860e+00, 1.621974e-03\\
              6.578699e+00, 1.412443e-03\\
              6.628538e+00, 1.941090e-03\\
              6.678376e+00, 5.052495e-04\\
              6.728215e+00, 1.087520e-03\\
              6.778053e+00, 1.366389e-03\\
              6.827892e+00, 1.909548e-03\\
              6.877731e+00, 7.903637e-04\\
              6.927569e+00, 1.268381e-03\\
              6.977408e+00, 2.048136e-03\\
              7.027247e+00, 1.330951e-03\\
              7.077085e+00, 1.854369e-03\\
              7.126924e+00, 1.329300e-03\\
              7.176762e+00, 1.678620e-03\\
              7.226601e+00, 1.341949e-03\\
              7.276440e+00, 2.296315e-03\\
              7.326278e+00, 1.231417e-03\\
              7.376117e+00, 9.594144e-04\\
              7.425956e+00, 1.609712e-03\\
              7.475794e+00, 1.891646e-03\\
              7.525633e+00, 7.792976e-04\\
              7.575471e+00, 5.363200e-04\\
              7.625310e+00, 1.651346e-03\\
              7.675149e+00, 1.657231e-03\\
              7.724987e+00, 1.331042e-03\\
              7.774826e+00, 8.494324e-04\\
              7.824665e+00, 1.062170e-03\\
              7.874503e+00, 2.216837e-03\\
              7.924342e+00, 1.840214e-03\\
              7.974181e+00, 1.445647e-03\\
              8.024019e+00, 1.310285e-03\\
              8.073858e+00, 2.012387e-03\\
              8.123696e+00, 8.608922e-04\\
              8.173535e+00, 9.249868e-04\\
              8.223374e+00, 2.140525e-03\\
              8.273212e+00, 1.582928e-03\\
              8.323051e+00, 2.199719e-03\\
              8.372890e+00, 1.135227e-03\\
              8.422728e+00, 1.318083e-03\\
              8.472567e+00, 9.625366e-04\\
              8.522405e+00, 2.288602e-03\\
              8.572244e+00, 1.750778e-03\\
              8.622083e+00, 8.785222e-04\\
              8.671921e+00, 6.635298e-04\\
              8.721760e+00, 1.384773e-03\\
              8.771599e+00, 1.455082e-03\\
              8.821437e+00, 8.433618e-04\\
              8.871276e+00, 1.951344e-03\\
              8.921114e+00, 8.961111e-04\\
              8.970953e+00, 1.833480e-03\\
              9.020792e+00, 7.899430e-04\\
              9.070630e+00, 1.036090e-03\\
              9.120469e+00, 1.666305e-03\\
              9.170308e+00, 1.917937e-03\\
              9.220146e+00, 1.129226e-03\\
              9.269985e+00, 1.260733e-03\\
              9.319823e+00, 1.054256e-03\\
              9.369662e+00, 1.334531e-03\\
              9.419501e+00, 1.727010e-03\\
              9.469339e+00, 1.152767e-03\\
              9.519178e+00, 8.017031e-04\\
              9.569017e+00, 8.084845e-04\\
              9.618855e+00, 1.355257e-03\\
              9.668694e+00, 1.822917e-03\\
              9.718533e+00, 1.210919e-03\\
              9.768371e+00, 1.407223e-03\\
              9.818210e+00, 8.604977e-04\\
              9.868048e+00, 1.381625e-03\\
              9.917887e+00, 2.163631e-03\\
              9.967726e+00, 5.867016e-04\\
              1.000000e+01, 9.016917e-04\\
          };
          \addplot [color=red,line width=1,solid]
            table[col sep=comma, row sep=crcr]{%
              0.000000e+00, 0.000000e+00\\
              4.983863e-02, 1.110223e-16\\
              9.967726e-02, 5.551115e-16\\
              1.495159e-01, 5.551115e-16\\
              1.993545e-01, 4.440892e-16\\
              2.491931e-01, 2.220446e-16\\
              2.990318e-01, 0.000000e+00\\
              3.488704e-01, 3.330669e-16\\
              3.987090e-01, 3.330669e-16\\
              4.485477e-01, 5.551115e-16\\
              4.983863e-01, 4.440892e-16\\
              5.482249e-01, 3.330669e-16\\
              5.980635e-01, 4.440892e-16\\
              6.479022e-01, 7.771561e-16\\
              6.977408e-01, 8.881784e-16\\
              7.475794e-01, 7.771561e-16\\
              7.974181e-01, 8.881784e-16\\
              8.472567e-01, 8.881784e-16\\
              8.970953e-01, 7.771561e-16\\
              9.469339e-01, 5.551115e-16\\
              9.967726e-01, 7.771561e-16\\
              1.046611e+00, 7.771561e-16\\
              1.096450e+00, 5.551115e-16\\
              1.146288e+00, 7.771561e-16\\
              1.196127e+00, 7.771561e-16\\
              1.245966e+00, 5.551115e-16\\
              1.295804e+00, 6.661338e-16\\
              1.345643e+00, 7.771561e-16\\
              1.395482e+00, 5.551115e-16\\
              1.445320e+00, 5.551115e-16\\
              1.495159e+00, 6.661338e-16\\
              1.544997e+00, 1.110223e-15\\
              1.594836e+00, 6.661338e-16\\
              1.644675e+00, 6.661338e-16\\
              1.694513e+00, 8.881784e-16\\
              1.744352e+00, 1.110223e-15\\
              1.794191e+00, 8.881784e-16\\
              1.844029e+00, 1.332268e-15\\
              1.893868e+00, 1.332268e-15\\
              1.943707e+00, 1.110223e-15\\
              1.993545e+00, 4.440892e-16\\
              2.043384e+00, 8.881784e-16\\
              2.093222e+00, 6.661338e-16\\
              2.143061e+00, 1.110223e-15\\
              2.192900e+00, 6.661338e-16\\
              2.242738e+00, 1.110223e-15\\
              2.292577e+00, 9.992007e-16\\
              2.342416e+00, 1.110223e-15\\
              2.392254e+00, 1.110223e-15\\
              2.442093e+00, 8.881784e-16\\
              2.491931e+00, 1.110223e-15\\
              2.541770e+00, 9.992007e-16\\
              2.591609e+00, 1.110223e-15\\
              2.641447e+00, 8.881784e-16\\
              2.691286e+00, 8.881784e-16\\
              2.741125e+00, 6.661338e-16\\
              2.790963e+00, 6.661338e-16\\
              2.840802e+00, 8.881784e-16\\
              2.890640e+00, 6.661338e-16\\
              2.940479e+00, 1.110223e-15\\
              2.990318e+00, 6.661338e-16\\
              3.040156e+00, 1.110223e-15\\
              3.089995e+00, 1.110223e-15\\
              3.139834e+00, 6.661338e-16\\
              3.189672e+00, 8.881784e-16\\
              3.239511e+00, 6.661338e-16\\
              3.289349e+00, 6.661338e-16\\
              3.339188e+00, 8.881784e-16\\
              3.389027e+00, 4.440892e-16\\
              3.438865e+00, 4.440892e-16\\
              3.488704e+00, 8.881784e-16\\
              3.538543e+00, 2.220446e-16\\
              3.588381e+00, 8.881784e-16\\
              3.638220e+00, 8.881784e-16\\
              3.688058e+00, 1.110223e-15\\
              3.737897e+00, 1.110223e-15\\
              3.787736e+00, 1.110223e-15\\
              3.837574e+00, 1.110223e-15\\
              3.887413e+00, 1.110223e-15\\
              3.937252e+00, 1.110223e-15\\
              3.987090e+00, 1.110223e-15\\
              4.036929e+00, 8.881784e-16\\
              4.086768e+00, 8.881784e-16\\
              4.136606e+00, 8.881784e-16\\
              4.186445e+00, 8.881784e-16\\
              4.236283e+00, 6.661338e-16\\
              4.286122e+00, 8.881784e-16\\
              4.335961e+00, 6.661338e-16\\
              4.385799e+00, 6.661338e-16\\
              4.435638e+00, 8.881784e-16\\
              4.485477e+00, 6.661338e-16\\
              4.535315e+00, 6.661338e-16\\
              4.585154e+00, 8.881784e-16\\
              4.634992e+00, 8.881784e-16\\
              4.684831e+00, 8.881784e-16\\
              4.734670e+00, 1.110223e-15\\
              4.784508e+00, 8.881784e-16\\
              4.834347e+00, 6.661338e-16\\
              4.884186e+00, 8.881784e-16\\
              4.934024e+00, 4.440892e-16\\
              4.983863e+00, 3.330669e-16\\
              5.033701e+00, 5.551115e-16\\
              5.083540e+00, 5.551115e-16\\
              5.133379e+00, 8.881784e-16\\
              5.183217e+00, 3.330669e-16\\
              5.233056e+00, 8.881784e-16\\
              5.282895e+00, 5.551115e-16\\
              5.332733e+00, 3.330669e-16\\
              5.382572e+00, 2.220446e-16\\
              5.432410e+00, 4.440892e-16\\
              5.482249e+00, 2.220446e-16\\
              5.532088e+00, 5.551115e-16\\
              5.581926e+00, 8.881784e-16\\
              5.631765e+00, 4.440892e-16\\
              5.681604e+00, 3.330669e-16\\
              5.731442e+00, 4.440892e-16\\
              5.781281e+00, 5.551115e-16\\
              5.831120e+00, 4.440892e-16\\
              5.880958e+00, 4.440892e-16\\
              5.930797e+00, 3.330669e-16\\
              5.980635e+00, 6.661338e-16\\
              6.030474e+00, 7.771561e-16\\
              6.080313e+00, 6.661338e-16\\
              6.130151e+00, 4.440892e-16\\
              6.179990e+00, 8.881784e-16\\
              6.229829e+00, 6.661338e-16\\
              6.279667e+00, 7.771561e-16\\
              6.329506e+00, 1.110223e-15\\
              6.379344e+00, 1.332268e-15\\
              6.429183e+00, 8.881784e-16\\
              6.479022e+00, 9.992007e-16\\
              6.528860e+00, 6.661338e-16\\
              6.578699e+00, 6.661338e-16\\
              6.628538e+00, 5.551115e-16\\
              6.678376e+00, 7.771561e-16\\
              6.728215e+00, 8.881784e-16\\
              6.778053e+00, 7.771561e-16\\
              6.827892e+00, 9.992007e-16\\
              6.877731e+00, 9.992007e-16\\
              6.927569e+00, 1.554312e-15\\
              6.977408e+00, 1.221245e-15\\
              7.027247e+00, 1.110223e-15\\
              7.077085e+00, 1.110223e-15\\
              7.126924e+00, 7.771561e-16\\
              7.176762e+00, 8.881784e-16\\
              7.226601e+00, 7.771561e-16\\
              7.276440e+00, 1.110223e-15\\
              7.326278e+00, 1.332268e-15\\
              7.376117e+00, 1.554312e-15\\
              7.425956e+00, 1.665335e-15\\
              7.475794e+00, 1.776357e-15\\
              7.525633e+00, 1.776357e-15\\
              7.575471e+00, 1.443290e-15\\
              7.625310e+00, 1.332268e-15\\
              7.675149e+00, 1.332268e-15\\
              7.724987e+00, 1.776357e-15\\
              7.774826e+00, 1.554312e-15\\
              7.824665e+00, 1.443290e-15\\
              7.874503e+00, 1.443290e-15\\
              7.924342e+00, 1.665335e-15\\
              7.974181e+00, 1.443290e-15\\
              8.024019e+00, 1.776357e-15\\
              8.073858e+00, 1.665335e-15\\
              8.123696e+00, 1.776357e-15\\
              8.173535e+00, 1.665335e-15\\
              8.223374e+00, 1.998401e-15\\
              8.273212e+00, 1.998401e-15\\
              8.323051e+00, 1.887379e-15\\
              8.372890e+00, 1.776357e-15\\
              8.422728e+00, 1.443290e-15\\
              8.472567e+00, 1.554312e-15\\
              8.522405e+00, 1.110223e-15\\
              8.572244e+00, 1.110223e-15\\
              8.622083e+00, 1.332268e-15\\
              8.671921e+00, 1.110223e-15\\
              8.721760e+00, 1.110223e-15\\
              8.771599e+00, 1.110223e-15\\
              8.821437e+00, 1.110223e-15\\
              8.871276e+00, 1.332268e-15\\
              8.921114e+00, 9.992007e-16\\
              8.970953e+00, 1.332268e-15\\
              9.020792e+00, 1.110223e-15\\
              9.070630e+00, 1.110223e-15\\
              9.120469e+00, 9.992007e-16\\
              9.170308e+00, 9.992007e-16\\
              9.220146e+00, 8.881784e-16\\
              9.269985e+00, 8.881784e-16\\
              9.319823e+00, 9.992007e-16\\
              9.369662e+00, 6.661338e-16\\
              9.419501e+00, 8.881784e-16\\
              9.469339e+00, 8.881784e-16\\
              9.519178e+00, 9.992007e-16\\
              9.569017e+00, 1.110223e-15\\
              9.618855e+00, 9.992007e-16\\
              9.668694e+00, 8.881784e-16\\
              9.718533e+00, 1.110223e-15\\
              9.768371e+00, 1.110223e-15\\
              9.818210e+00, 1.110223e-15\\
              9.868048e+00, 1.110223e-15\\
              9.917887e+00, 1.221245e-15\\
              9.967726e+00, 9.992007e-16\\
              1.000000e+01, 8.881784e-16\\
          };
      \end{axis}
    \end{tikzpicture}%
  \end{minipage}
  \caption{Conservation  errors for velocities and strains for SFIM
  using a split-side mortar with an upwind and central flux with interpolated
  mesh.\label{fig:conservative:constant:components}}
\end{figure}
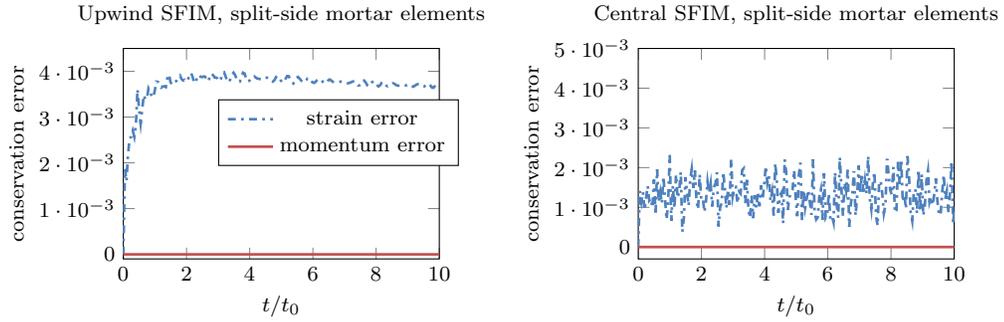
To demonstrate the origin of the conervation error, in
\fref{fig:conservative:constant:components} the conservation error is shown
separately for the strain ($\sum_{ij}e_{\epsilon_{ij}}$) and momentum
($\sum_{i}e_{\rho v_{i}}$) components using the interpolated geometry approach.
As can be seen the conservation error is coming purely from lack of conservation
of the strain. As noted in \sref{sec:constant}, the momentum components are conservative by
construction since the velocity update~\eref{eqn:dg:vel:quad} uses a weak
derivative.

\begin{figure}
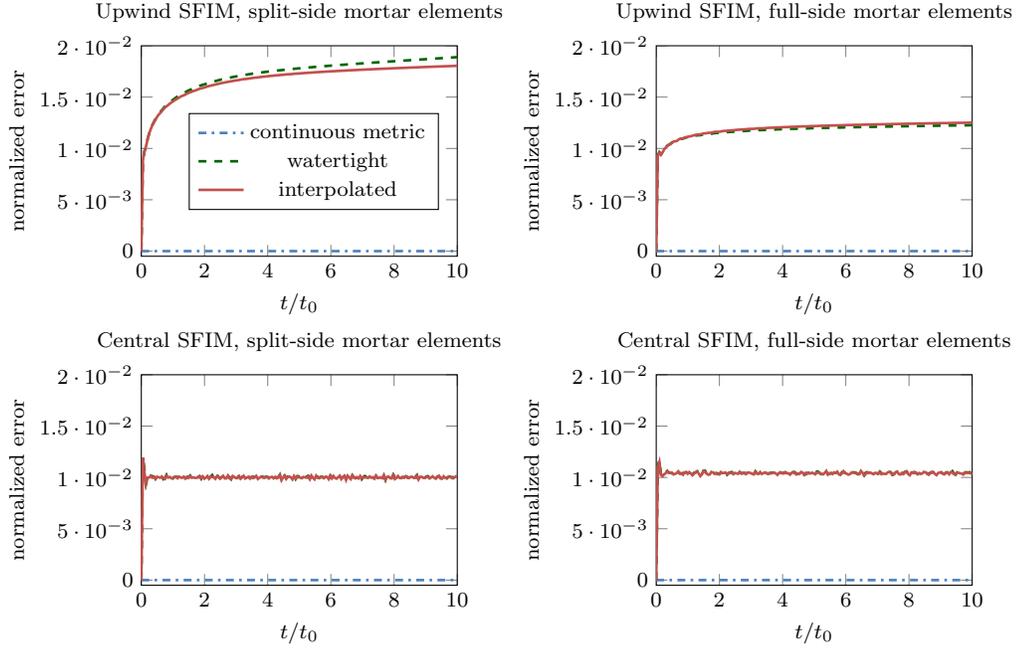

  \begin{minipage}{0.45\textwidth}
%
  \end{minipage}
  \caption{Constant preservation error for both the full-side and split-side
  mortar with both upwinding ($\alpha = 1$) and central flux ($\alpha = 0$) for
  the continuous metric term, watertight, and interpolated handling of
  geometry. The normalized error is the L$^{2}$ error in the solution normalized
  by the initial energy.\label{fig:constant:box} }
\end{figure}
In order to test the constant preserving properties of SFIM, the initial
condition is made constant:
\begin{align}
  v_{1}       &= 1, &
  v_{2}       &= 2, &
  v_{3}       &= 3, &
  \sigma_{11} &= 4, &
  \sigma_{12} &= 5, &
  \sigma_{13} &= 6, \\
  \sigma_{22} &= 7, &
  \sigma_{23} &= 8, &
  \sigma_{33} &= 9,
\end{align}
with the material properties still assigned pseudorandom values.
\fref{fig:constant:box} shows the L$^{2}$ error normalized by the initial energy
in the solution for different choices of schemes. As can be seen both the
interpolated and watertight geometry treatment do not exactly preserve
constants, but the continuous metric treatment does for both the full-side and
split-side mortar regardless of numerical flux. Unlike in the conservation
error, the constant preservation error cannot be separated into velocity and
stress errors since once one component moves away from the constant all the
components are affected.

\subsection{Mode of a Heterogeneous Spherical Shell}\label{sec:shell:mode}
\begin{figure}[tb]
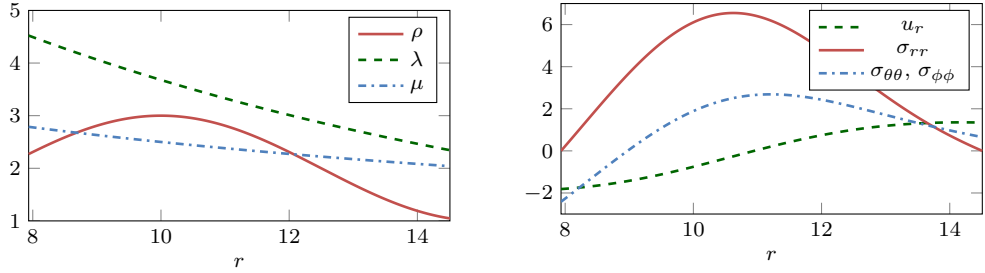

  \centering
%
  \end{minipage}
  \caption{(left) Material properties as a function of $r$ for the spherical
  shell test problem. (right) Non-zero, modal solution fields (in spherical
  coordinates) at time $t = 0$ for the spherical shell test
  problem.\label{fig:shell:flds}}
\end{figure}
To further explore the accuracy of SFIM on curved meshes, an adapted spherical
shell is considered.  The spherical shell is initially meshed using $6$
hexahedral elements, and these elements are then refined using a pseudorandom
number generator to produce a non-conforming base mesh with $|\EE| = 745$ elements.
Since the common (hexahedral) decomposition of a spherical shell would result in
one of the grid lines being aligned with the radial direction, the complexity of
the problem is increased by rotating the outside of the shell by one radian with
respect to the inside of the shell (along the polar angle). A cross-section
through the center of the sphere of the mesh is shown in \fref{fig:shell:mesh}.
Only the continuous metric approach for geomtry is considered here.
The material properties are taken to be
\begin{align}
  \rho &= 2 + \cos\left(\frac{2 \pi r}{10}\right), &
  \mu  &= \frac{50}{10+r},&
  \lambda  = 10\exp\left(\frac{-r}{10}\right);
\end{align}
see also the left panel of \fref{fig:shell:flds}.  To derive a modal solution
with these material parameters, it is assumed that when written in spherical
coordinates, only the radial displacement is non-zero.  The radial displacement
is taken to be of the form
\begin{align}
  u_{r} = \cos(t) \phi(r),
\end{align}
e.g., the temporal and spatial dependence are seperable. With this, in spherical
coordinates the components of the stress tensor are
\begin{align}
  \sigma_{rr} &= \left(\lambda + 2\mu\right) \pd{u_{r}}{r} + 2\mu \frac{u_{r}}{r},
  &
  \sigma_{\theta\theta} &= \sigma_{\phi\phi} =
  2\left(\lambda + \mu\right) \frac{u_{r}}{r} + \lambda \pd{u_{r}}{r},
\end{align}
with $\sigma_{r\theta} = \sigma_{r\phi} = \sigma_{\theta\phi} = 0$. To solve
this the MATLAB package Chebfun~\cite{DriscollHaleTrefethen2014} is used, and
$R_{1}$ and $R_{2}$ are chosen so that $\sigma_{rr} = 0$ is zero at the inner
and outer radii of the shell; see \aref{app:shell} for details.  The right panel
of
\fref{fig:shell:flds} shows $u_{r}$, $\sigma_{rr}$, $\sigma_{\theta\theta}$, and
$\sigma_{\phi\phi}$ as functions of $r$ at time $t = 0$. To implement the
solution, the displacements are first converted to Cartesian coordinates,
\begin{align}
  u_{i} &= \frac{x_{i}}{r}u_{r},
\end{align}
where $r = \sqrt{x_{i}x_{i}}$ is the radial distance, and then the velocities
and stresses are computed using~\eref{eqn:u2vS}.

\begin{table}
  \centering
  \begin{tabular}{rllll}
    \toprule
    & \multicolumn{1}{c}{$N=4$}
    & \multicolumn{1}{c}{$N=5$}
    & \multicolumn{1}{c}{$N=6$}
    & \multicolumn{1}{c}{$N=7$}\\
    {$|\EE|$}
    & \multicolumn{1}{c}{error (rate)}
    & \multicolumn{1}{c}{error (rate)}
    & \multicolumn{1}{c}{error (rate)}
    & \multicolumn{1}{c}{error (rate)}\\
    \midrule
    & \multicolumn{4}{c}{SFIM, full-side mortar elements}\\
    $   745$ & $8.3\times10^{-0}$ $(0.0)$ & $3.2\times10^{-0}$ $(0.0)$ & $8.7\times10^{-1}$ $(0.0)$ & $2.3\times10^{-1}$ $(0.0)$\\
    $  5960$ & $3.2\times10^{-1}$ $(4.7)$ & $4.9\times10^{-2}$ $(6.0)$ & $7.6\times10^{-3}$ $(6.8)$ & $1.1\times10^{-3}$ $(7.6)$\\
    $ 47680$ & $1.1\times10^{-2}$ $(4.9)$ & $9.1\times10^{-4}$ $(5.7)$ & $7.5\times10^{-5}$ $(6.7)$ & $5.5\times10^{-6}$ $(7.7)$\\
    $381440$ & $4.1\times10^{-4}$ $(4.7)$ & $1.8\times10^{-5}$ $(5.7)$ & $7.5\times10^{-7}$ $(6.6)$ & $2.7\times10^{-8}$ $(7.6)$\\
    \midrule
    & \multicolumn{4}{c}{SFIM, split-side mortar elements}\\
    $   745$ & $8.3\times10^{-0}$ $(0.0)$ & $3.1\times10^{-0}$ $(0.0)$ & $8.5\times10^{-1}$ $(0.0)$ & $2.3\times10^{-1}$ $(0.0)$\\
    $  5960$ & $3.2\times10^{-1}$ $(4.7)$ & $4.8\times10^{-2}$ $(6.0)$ & $7.5\times10^{-3}$ $(6.8)$ & $1.1\times10^{-3}$ $(7.6)$\\
    $ 47680$ & $1.0\times10^{-2}$ $(4.9)$ & $8.9\times10^{-4}$ $(5.7)$ & $7.2\times10^{-5}$ $(6.7)$ & $5.3\times10^{-6}$ $(7.7)$\\
    $381440$ & $3.8\times10^{-4}$ $(4.8)$ & $1.7\times10^{-5}$ $(5.7)$ & $6.9\times10^{-7}$ $(6.7)$ & $2.5\times10^{-8}$ $(7.7)$\\
    \bottomrule

  \end{tabular}
  \caption{Error and estimated convergence rates for the spherical
  shell using SFIM with both full-side and split-side mortar elements using an
  upwind flux. A log-log plot of these errors is shown in
  \fref{fig:shell}.\label{tab:shell}}
\end{table}
\begin{figure}[tb]
  \centering
  \begin{tikzpicture}
    \begin{loglogaxis}[
        xlabel=\textsc{Number of Elements},
        ylabel=L$^2$ Error,
        legend pos= north east
      ]

      \addplot[color=grn,mark=diamond*,line width=1] plot coordinates {%
        (    8, 8.3097811963463979e+00)
        (   64, 3.1759817459010359e-01)
        (  512, 1.0618880916478252e-02)
        ( 4096, 4.0963699066878958e-04)
      };

      \addplot[color=org,mark=square*,line width=1] plot coordinates {%
        (    8, 3.2373392382145081e+00)
        (   64, 4.8908539604001385e-02)
        (  512, 9.1445257309106522e-04)
        ( 4096, 1.7834423747545586e-05)
      };

      \addplot[color=prp,mark=pentagon*,line width=1] plot coordinates {%
        (    8, 8.6634295643591197e-01)
        (   64, 7.6249214645476683e-03)
        (  512, 7.4935892463975362e-05)
        ( 4096, 7.4884333220607252e-07)
      };

      \addplot[color=black,mark=*,line width=1] plot coordinates {%
        (    8, 2.3004952427599742e-01)
        (   64, 1.1484346683710820e-03)
        (  512, 5.4733919542406406e-06)
        ( 4096, 2.7353708707155175e-08)
      };

      \addplot[color=grn,mark=diamond*,line width=1,dashed] plot coordinates {%
        (    8, 8.3002608920145740e+00)
        (   64, 3.1515498712647122e-01)
        (  512, 1.0282655372037455e-02)
        ( 4096, 3.7929815294197698e-04)
      };

      \addplot[color=org,mark=square*,line width=1,dashed] plot coordinates {%
        (    8, 3.1347531386733452e+00)
        (   64, 4.7949650776244329e-02)
        (  512, 8.9164303550534239e-04)
        ( 4096, 1.6639561279908413e-05)
      };

      \addplot[color=prp,mark=pentagon*,line width=1,dashed] plot coordinates {%
        (    8, 8.5424308847515917e-01)
        (   64, 7.5342636935113918e-03)
        (  512, 7.2028092288188724e-05)
        ( 4096, 6.8980607256601298e-07)
      };

      \addplot[color=black,mark=*,line width=1,dashed] plot coordinates {%
        (    8, 2.2558881637570674e-01)
        (   64, 1.1242386724799329e-03)
        (  512, 5.2877797806373952e-06)
        ( 4096, 2.5338984156190091e-08)
      };

      \legend{$N=4$\\$N=5$\\$N=6$\\$N=7$\\};
    \end{loglogaxis}
  \end{tikzpicture}
  \caption{Log-log plot of $|\EE|$ (number of elements) versus the L$^{2}$ error
  (measured with the energy norm) for a mesh the spherical shell test problem
  for SFIM with full-side mortar elements (solid lines) and split-side mortar
  elements (dashed lines) with the upwind flux. Numerical values for the error
  and rates are given in Table~\ref{tab:shell}.\label{fig:shell}}
\end{figure}
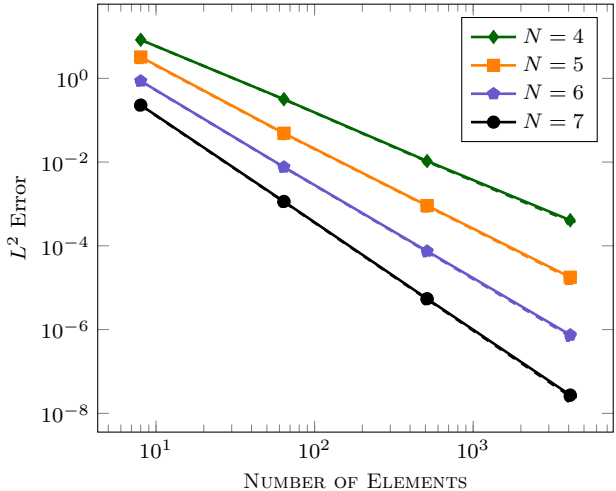
In Table~\ref{tab:shell} and \fref{fig:shell} error and convergence results are
given for this problem for varying polynomial orders for SFIM with both
full-side and split-side mortar elements using the upwind flux. The final time
for the simulation is $t = 6\pi$, and thus three oscillations of the solution
have been considered. As can be seen in the table and figure, the method
performs similarly to the previous planewave test when the elements are curved
and the material properties are heterogeneous.

\section{Conclusions}
In this work we have presented a skew-symmetric, semi-discretely energy stable
discontinuous Galerkin method on nonconforming, non-affine meshes. The two key
ideas that lead to semi-discrete energy stability were:
\begin{itemize}
  \item writing the variational problem in skew-symmetric form so that
    integration-by-parts was not needed in the energy stability analysis; and
  \item evaluating all the surface integrals on the mortar elements
    (as opposed to volume elements faces).
\end{itemize}
Both of these steps ensure that variational crimes, namely inexact integration, do
not affect the solution in a manner that results in energy growth. Importantly,
skew-symmetry is beneficial for nonconforming meshes even for constant
coefficient problems on affine meshes when the integration is inexact, e.g.,
Legendre-Gauss-Lobatto quadrature on tensor product quadrilateral and hexahedral
elements (DG-SEM); for non-affine meshes and/or nonconstant coefficients it is
now well-known that skew-symmetry is needed even for conforming
meshes.

We have also shown how the SFIM formulation can be made both conservative and
constant preserving by adding a set of accuracy and consistency constraints on
the stiffness and projection operators. Most of the additional requirements
will be satisfied naturally by any higher-order method (such as the exact
representation and differentiation of constants), but the existence of a
discrete divergence property~\eref{eqn:disc:div} requires careful handling of
the metric terms on non-affine meshes; in \aref{app:div} we give one approach to
satisfying the discrete divergence theorem for DG-SEM\@.

In the results we presented two mortar approaches for handling nonconforming
interfaces which we called the full-side and split-side mortar; see
\fref{fig:mortars}. When full-side mortars are used, mortar elements on
nonconforming interfaces conform to the larger element face, thus multiple
small faces connect to a single mortar element. When split-side mortars are
used, the smaller faces conform to the mortar elements, thus the larger element
faces are connected to multiple mortar elements.
Both the split-side and full-side mortars are energy stable with SFIM and can be
made conservative and constant preserving.
In terms of accuracy observed, the two methods performed similarly with
the full-side mortar being slightly better in the affine, constant coefficient
test and the split-side mortar performing marginally better for the non-affine,
variable coefficient problem.
The one big difference between the two mortar approaches was the eigenvalue
spectrum, with the full-side mortar having a spectral radius more than twice as
large as the split-side for the problem considered.
Given these results, and the fact that the split-side mortar (in some sense) is
the most natural approach for discontinuous Galerkin methods, at this time we
would recommend the split-side mortar over the full-side. That said, the tests
here represent only a few out of the infinite number of problems that could be
considered. Amongst the important problems not considered here are the impact
the mortar elements have on spurious solution modes and whether either approach
is more accurate for interface waves.

\begin{acknowledgements}
  Jeremy~E.~Kozdon was partially supported by National Science Foundation Award
  EAR-1547596 and Office of Naval Research Award N0001416WX01290. Computational
  resources were provided in part by National Science Foundation Award
  OAC-1403203.

  The authors would like to thank the two anonymous reviewers for their
  feedback, particularly the recommendation to consider the conservation and
  constant state preserving properties of the method.
\end{acknowledgements}

\appendix

\section{Construction of L$^{2}$-Projection Operators}\label{app:project}
Here the construction of the mortar projection operators is discussed, namely
the construction of $\mathcal{P}^{m,e}$ for some $m \in \MM$ and $e \in
\EE^{m}$. Of particular interest is when the mortar space does not support all
the functions that the connected element faces support. For example, in the
center panel of \fref{fig:mortars} if mortar $m^2$ uses polynomials up to
degree $N$ as the approximation space then it cannot support the discontinuous
functions defined by functions on the connected faces of $e^{2}$ and $e^{3}$.

For an $m \in \MM$ and $e \in \EE^{m}$, let $\hat{m}^{e}$ be the portion of the
reference mortar element $\hat{m}$ that corresponds to the intersection in
physical space of $m$ and $\partial e$, that is $\hat{m}^{e} = \vec{r}^{m}(m
\cap \partial e)$. The projection operator in $\mathcal{P}^{m,e}$ is then
defined so that for a given $q^{e} \in \hat{V}^{e}$ the following holds for all
$\phi^{m} \in \hat{U}^{m}$:
\begin{align}
  \label{eqn:L2proj}
  \int_{\hat{m}} \phi^{m} q^{m,e} = \int_{\hat{m}^{e}} \phi^{m} q^{e},
\end{align}
where $q^{m,e} = \mathcal{P}^{m,e} q^{e}$. In the right-hand side integrand
$q^{e}$ is evaluated on the reference mortar, not the element boundary, and thus
$q^{e} =
q^{e}\left(\vec{x}^{e}\left(\vec{r}^{e}\left(\vec{x}^{m}\left(\vec{r}\right)\right)\right)\right)$
with $\vec{r}$ being the integration variable. Note the surface Jacobian
determinant is not
included in the definition of the projection operator given here, and thus the
projection operator is defined on the straight-sided reference element.

In the results, exact integration is used to
construct $\mat{P}^{m,e}$, so that there are no variational crimes in the
discrete representation of $\mathcal{P}^{m,e}$ defined from~\eref{eqn:L2proj}
(i.e., Legendre-Gauss-Lobatto (LGL) quadrature is not used).
Moreover, since the basis functions are tensor product, the L$^{2}$-projection
operators can also be constructed as tensor product operators, and only
one-dimensional projection operators are needed. Thus, since the nonconforming
interfaces in the results are two-to-one, only projection operators from the
bottom, $\mathcal{P}_{b}$, and top, $\mathcal{P}_{t}$, halves of a 1-D element
are required.
The bottom and top projection operators then satisfy
\begin{align}
  \label{eqn:L21dproj}
  \int_{-1}^{1} \phi\; \mathcal{P}_{b} q &= \int_{-1}^{0} \phi\; q,
  &
  \int_{-1}^{1} \phi\; \mathcal{P}_{t} q &= \int_{0}^{1} \phi\; q,
\end{align}
for all polynomials $q$ and $\phi$ of degree $N$.

To define the discrete operators $\mat{P}_{b}$ and $\mat{P}_{t}$, let
$\vec{\phi}$ be the evaluation of $\phi$ at the LGL quadrature nodes on the
interval $[-1,1]$. Similarly, let $\vec{q}_{b}$ be the evaluation of $q$ at
LGL quadrature nodes once they have been scaled to be between $[-1,0]$ and
$\mat{I}_{b}$ interpolate $\vec{\phi}$ to these same nodes; similar
definitions are used for $\vec{q}_{t}$ and $\mat{I}_{t}$ except on the interval
$[0, 1]$. Then if $\mat{M}$ is the exact mass matrix for integrating two
polynomials of degree $N$ evaluated at the LGL quadrature nodes in $[-1,1]$, then~\eref{eqn:L21dproj} is discretely
\begin{align}
  \vec{\phi}^{T}\mat{M}\mat{P}_{b}\vec{q}_{b} &=
  \frac{1}{2}\vec{\phi}^{T}\mat{I}_{b}^{T}\mat{M}\vec{q}_{b},&
  \vec{\phi}^{T}\mat{M}\mat{P}_{t}\vec{q}_{t} &=
  \frac{1}{2}\vec{\phi}^{T}\mat{I}_{t}^{T}\mat{M}\vec{q}_{t},
\end{align}
which holds for all $\vec{\phi}$, $\vec{q}_{t}$, and $\vec{q}_{b}$. Thus,
\begin{align}
  \mat{P}_{b} &= \frac{1}{2} \mat{M}^{-1} \mat{I}_{b}^{T} \mat{M},&
  \mat{P}_{t} &= \frac{1}{2} \mat{M}^{-1} \mat{I}_{t}^{T} \mat{M}.
\end{align}

For clarity, the derivations above are given for two-to-one nonconforming interfaces.
The method generalizes to the many-to-one nonconforming interfaces as well and
the derivation of the operators is similar to the two-to-one case.

\section{Construction of Modal Solution on the Heterogeneous Spherical Shell}\label{app:shell}
To construct a modal solution in the radial heterogeneous spherical
shell, the equations of isotropic elasticity are considered in spherical
coordinates. Since only the radial displacement is non-zero and the
displacement is of the form $u_{r} = \cos(t)\,\phi(r)$, these reduce to solving
the following boundary value problem:
\begin{align}
  \label{eqn:app:bvp}
  0 &= \rho\,\phi + \fd{\sigma_{rr}}{r} + \frac{1}{r}
  \left(2\sigma_{rr}-\sigma_{\theta\theta}-\sigma_{\varphi\varphi}\right),\\
  \sigma_{rr} &= \left(\lambda + 2\mu\right) \fd{\phi}{r} + \frac{2}{r}\lambda\,
  \phi,
  \\
  \sigma_{\theta\theta} &=
  \sigma_{\varphi\varphi} =
  \lambda \fd{\phi}{r} + \frac{2}{r}\left(\lambda+\mu\right) \phi,
\end{align}
with $\sigma_{rr} = 0$ at $r = R_{1}$ and $r = R_{2}$. We note that the solution
to this problem is not unique and for the test problem we just need to find a
particular solution to the equation.

To solve~\eref{eqn:app:bvp} the MATLAB Chebfun
package~\cite{DriscollHaleTrefethen2014} was used.  To find a nontrivial
solution, the initial value problem with values for $\phi(0) = 0$ and
$\fd{\phi}{r}(0) = -\frac{1}{3}$ is solved and then the domain of the solution is
chosen to be between $R_{1}$ and $R_{2}$, the first two roots of the resulting
$\sigma_{rr}$ field. The Chebfun script along with the data necessary for
constructing the high-order polynomial interpolant
are available at the GitHub repository
\url{https://github.com/bfam/spherical_shell}.

\section{Discrete Divergence Theorem for Tensor Product Hexahedral Elements with
LGL Quadrature}\label{app:div}

In this appendix stiffness matrices that satisfy~\eref{eqn:disc:div} are
constructed  for the tensor product hexahedral elements using Legendre-Gauss-Lobatto
(LGL) quadrature. The key step in ensuring this for curvilinear meshes is the
construction of the metric terms. Here, we assume that there is a single
polynomial order for the entire mesh $N$, so that $\hat{V}^{e} = \QQ^{N,3}$ for
all $e\in\EE$ and only the three-dimensional case is considered. The mortar
space is taken to be tensor product polynomial with $\hat{U}^{m} = \QQ^{N,2}$
for all $m \in \MM$ and the quadrature rule on the mortar is the tensor product
LGL quadrature.

Across nonconforming mesh interfaces (and edges) the larger face (or edge) is
referred to as the full face (or edge) and the smaller faces (or edges) as the
hanging faces (or edges).  The mortar element is always
assumed to conform to the face on the minus side of the mortar. For
mortar elements between conforming faces the minus side is arbitrary. When
full-side mortars are used the minus sides of nonconforming mortar elements are
the full faces and when split-side mortars are used minus sides are the hanging
faces. Since mortar elements conform to one of the volume faces, the metric
terms on the mortar (surface Jacobian determinant and unit normal vector) will
match the element face to which a mortar conforms.

The four key step we use to satisfy~\eref{eqn:disc:div} are:
\begin{itemize}
  \item the mesh is made discretely watertight;
  \item the metric terms are computed using a curl invariant
    form~\cite{Kopriva2006jsc} which avoids the need for a discrete product
    rule;
  \item aliasing errors along nonconforming faces and edges are incurred in a
    similar manner on all connected elements; and
  \item metric products are decreased in one polynomial order to ensure that
    certain face integrals are exact under LGL quadrature.
\end{itemize}

\subsection{Properties of LGL Quadrature}
Before discussing the construction of the metric terms and stiffness matrices,
we review some properties of LGL quadrature. For $q,p,J \in \QQ^{N,3}$ the
LGL approximation of the inner product is
\begin{align}
  \notag
  \int_{-1}^{1}\int_{-1}^{1}\int_{-1}^{1} Jpq &\approx
  \sum_{i=0}^{N}\sum_{j=0}^{N}\sum_{k=0}^{N}
  \omega_{i}\omega_{j}\omega_{k}
  J\left(\xi_{i}, \xi_{j}, \xi_{k}\right)
  p\left(\xi_{i}, \xi_{j}, \xi_{k}\right)
  q\left(\xi_{i}, \xi_{j}, \xi_{k}\right)\\
  \notag
  &=
  \vec{p}^{T} \mat{J} \left(\mat{\hat{W}}\otimes\mat{\hat{W}}\otimes\mat{\hat{W}}\right)\vec{q}\\
  \label{eqn:LGL}
  &=
  \vec{p}^{T} \mat{J} \mat{M} \vec{q}.
\end{align}
Here, $\omega_{i}$ are the quadrature weights and $\xi_{k}$ the quadrature
nodes which have the property that $\xi_{0} = -1$ and $\xi_{N} = 1$.
The vectors $\vec{p}$ and $\vec{q}$ are the stacking of the $p$ and $q$ at the
nodal locations; below we assume that the stacking has been done so that
$p\left(\xi_{i}, \xi_{j}, \xi_{k}\right)$ is element $i + 1 + jN + kN^2$ of
$\vec{p}$ (i.e., the first dimension is the fastest). Similarly, the diagonal
matrix $\mat{J}$ approximates $J$ at the quadrature nodes (ordering is the same
as the vectors $\vec{p}$ and $\vec{q}$). The quadrature weights have been
assembled into the diagonal matrix $\mat{\hat{W}}$ with element $i,i$
being $\omega_{i-1}$ and $\otimes$ being the matrix Kronecker product. We note
that the LGL integral approximation~\eref{eqn:LGL} will only be exact if the
product $Jpq\in\QQ^{2N-1,3}$.

Let $\mat{D}_{j}$ be the exact derivative matrix in the $r_{j}$ direction:
if $q \in \QQ^{N,3}$ and $p = \partial q/\partial r_{j}$ then
\begin{align}
  \mat{D}_{j} \vec{q} = \vec{p}.
\end{align}
If $\mat{\hat{D}}$ is the one-dimensional LGL derivative matrix then
\begin{align}
  \mat{D}_{1} &= \mat{I} \otimes \mat{I} \otimes \mat{\hat{D}}, &
  \mat{D}_{2} &= \mat{I} \otimes \mat{\hat{D}} \otimes \mat{I}, &
  \mat{D}_{3} &= \mat{\hat{D}} \otimes \mat{I} \otimes \mat{I},
\end{align}
with $\mat{I}$ being the $(N+1)\times(N+1)$ identity matrix. Since
differentiation decreases the polynomial order by one, the LGL differentiation
and quadrature matrices satisfy the one-dimensional summation-by-parts property~\cite{KoprivaGassner2010}:
\begin{align}
  \label{eqn:SBP}
  \mat{\hat{W}} \mat{\hat{D}} + \mat{\hat{D}}^{T} \mat{\hat{W}}
  &= \vec{e}_{N}\vec{e}_{N}^{T} - \vec{e}_{0}\vec{e}_{0}^{T},&
  \vec{e}_{0} &=
  \begin{bmatrix}
    1\\
    0\\
    \vdots\\
    0
  \end{bmatrix},&
  \vec{e}_{N} &=
  \begin{bmatrix}
    0\\
    \vdots\\
    0\\
    1
  \end{bmatrix}.
\end{align}
For the three-dimensional operators the following approximation of the
divergence theorem holds on the reference element:
\begin{align}
  \label{eqn:div1}
  \vec{p}^{T}
  \mat{M}
  \mat{D}_{1}
  \vec{q}
  +
  \vec{p}^{T}
  \mat{D}_{1}^{T}
  \mat{M}
  \vec{q}
  &=
  \vec{p}^{T}
  \left(\mat{\hat{W}}\otimes\mat{\hat{W}}\otimes\left(\vec{e}_{N}\vec{e}_{N}^{T}
  - \vec{e}_{0}\vec{e}_{0}^{T}\right)\right)
  \vec{q},\\
  \label{eqn:div2}
  \vec{p}^{T}
  \mat{M}
  \mat{D}_{2}
  \vec{q}
  +
  \vec{p}^{T}
  \mat{D}_{2}^{T}
  \mat{M}
  \vec{q}
  &=
  \vec{p}^{T}
  \left(\mat{\hat{W}}\otimes\left(\vec{e}_{N}\vec{e}_{N}^{T} -
  \vec{e}_{0}\vec{e}_{0}^{T}\right)\otimes\mat{\hat{W}}\right)
  \vec{q},\\
  \label{eqn:div3}
  \vec{p}^{T}
  \mat{M}
  \mat{D}_{3}
  \vec{q}
  +
  \vec{p}^{T}
  \mat{D}_{3}^{T}
  \mat{M}
  \vec{q}
  &=
  \vec{p}^{T}
  \left(\left(\vec{e}_{N}\vec{e}_{N}^{T} - \vec{e}_{0}\vec{e}_{0}^{T}\right)\otimes\mat{\hat{W}}\otimes\mat{\hat{W}}\right)
  \vec{q}.
\end{align}
The right-hand sides of the above equations are approximations of surface
integrals along the six faces of the reference element. In order to make this
explicit we number the faces of the element as:
\begin{center}
         face 1 is the face with $r_{1} = -1$,~
         face 2 is the face with $r_{1} = +1$,\\
         face 3 is the face with $r_{2} = -1$,~
         face 4 is the face with $r_{2} = +1$,\\
         face 5 is the face with $r_{3} = -1$,~
         face 6 is the face with $r_{3} = +1$,
\end{center}
and define the value of the volume vector on the faces as
\begin{align}
  \vec{p}^{e,1} &=
  \left(\mat{\hat{I}}\otimes\mat{\hat{I}}\otimes\vec{e}_{0}^{T}\right) \vec{p}^{e}
  =
  \mat{L}^{1}\vec{p}^{e}, &
  \vec{p}^{e,2} &=
  \left(\mat{\hat{I}}\otimes\mat{\hat{I}}\otimes\vec{e}_{N}^{T}\right) \vec{p}^{e}
  =
  \mat{L}^{2}\vec{p}^{e},\\
  \vec{p}^{e,3} &=
  \left(\mat{\hat{I}}\otimes\vec{e}_{0}^{T}\otimes\mat{\hat{I}}\right) \vec{p}^{e}
  =
  \mat{L}^{3}\vec{p}^{e}, &
  \vec{p}^{e,4} &=
  \left(\mat{\hat{I}}\otimes\vec{e}_{N}^{T}\otimes\mat{\hat{I}}\right) \vec{p}^{e}
  =
  \mat{L}^{4}\vec{p}^{e},\\
  \vec{p}^{e,5} &=
  \left(\vec{e}_{0}^{T}\otimes\mat{\hat{I}}\otimes\mat{\hat{I}}\right) \vec{p}^{e}
  =
  \mat{L}^{5}\vec{p}^{e}, &
  \vec{p}^{e,6} &=
  \left(\vec{e}_{N}^{T}\otimes\mat{\hat{I}}\otimes\mat{\hat{I}}\right) \vec{p}^{e}
  =
  \mat{L}^{6}\vec{p}^{e}.
\end{align}
With this notation~\eref{eqn:div1}--\eref{eqn:div3} can be rewritten as
\begin{align}
  {\left( \vec{p}^{e} \right)}^{T}
  \mat{M}
  \mat{D}_{1}
  \vec{q}^{e}
  +
  {\left( \vec{p}^{e} \right)}^{T}
  \mat{D}_{1}^{T}
  \mat{M}
  \vec{q}^{e}
  &=
  {\left( \vec{p}^{e,2} \right)}^{T} \mat{\bar{W}} \vec{q}^{e,2}
  -
  {\left( \vec{p}^{e,1} \right)}^{T} \mat{\bar{W}} \vec{q}^{e,1},\\
  {\left( \vec{p}^{e} \right)}^{T}
  \mat{M}
  \mat{D}_{2}
  \vec{q}^{e}
  +
  {\left( \vec{p}^{e} \right)}^{T}
  \mat{D}_{2}^{T}
  \mat{M}
  \vec{q}^{e}
  &=
  {\left( \vec{p}^{e,4} \right)}^{T} \mat{\bar{W}} \vec{q}^{e,4}
  -
  {\left( \vec{p}^{e,3} \right)}^{T} \mat{\bar{W}} \vec{q}^{e,3},\\
  {\left( \vec{p}^{e} \right)}^{T}
  \mat{M}
  \mat{D}_{3}
  \vec{q}^{e}
  +
  {\left( \vec{p}^{e} \right)}^{T}
  \mat{D}_{3}^{T}
  \mat{M}
  \vec{q}^{e}
  &=
  {\left( \vec{p}^{e,6} \right)}^{T} \mat{\bar{W}} \vec{q}^{e,6}
  -
  {\left( \vec{p}^{e,5} \right)}^{T} \mat{\bar{W}} \vec{q}^{e,5},
\end{align}
with $\mat{\bar{W}} = \mat{\hat{W}} \otimes \mat{\hat{W}}$ being the surface
quadrature rule.

\subsection{Stiffness Matrices and Element Based Relationships}
We now define the stiffness matrices for an $e\in\EE$ as
\begin{align}
  \mat{S}_{j}^{e} &= \mat{J}^{e}\mat{r}_{k,j}^{e} \mat{M}  \mat{D}_{k},
\end{align}
where the diagonal matrices $\mat{J}^{e}$ and $\mat{r}^{e}_{k,j}$ are approximations
of the Jacobian determinant and the metric derivative $\partial r_{k}/\partial
x_{j}$, respectively, at the nodal degrees of freedom. As discussed below,
the product $J\pd{r_{k}}{x_{j}}$ is computed from the of the metric derivatives
$\partial{x_{j}}/\partial{r_{k}}$. From~\eref{eqn:div1}--\eref{eqn:div3} it
follows that
\begin{align}
  \label{eqn:disc:div:elm:pre}
  \notag
  \vec{1}^{T} \mat{S}_{j}^{e} \vec{q}^{e} +
  \vec{1}^{T}\mat{J}^{e}\mat{r}_{k,j}^{e} \mat{D}_{k}^{T} \mat{M} \vec{q}^{e}
  =\;&
  \vec{1}^{T} \mat{J}^{e}\mat{r}_{k,j}^{e} \mat{M} \mat{D}_{k}\vec{q}^{e} +
  \vec{1}^{T}\mat{J}^{e}\mat{r}_{k,j}^{e} \mat{D}_{k}^{T} \mat{M} \vec{q}^{e}\\ \notag
  =\;
  & -\vec{1}^{T}\mat{J}^{e}\mat{r}_{1,j}^{e} {\left(\mat{L}^{1}\right)}^{T} \mat{\bar{W}} \vec{q}^{e,1}+\vec{1}^{T}\mat{J}^{e}\mat{r}_{1,j}^{e} {\left(\mat{L}^{2}\right)}^{T} \mat{\bar{W}} \vec{q}^{e,2}\\
  & -\vec{1}^{T}\mat{J}^{e}\mat{r}_{2,j}^{e} {\left(\mat{L}^{3}\right)}^{T} \mat{\bar{W}} \vec{q}^{e,3}+\vec{1}^{T}\mat{J}^{e}\mat{r}_{2,j}^{e} {\left(\mat{L}^{4}\right)}^{T} \mat{\bar{W}} \vec{q}^{e,4}\\ \notag
  & -\vec{1}^{T}\mat{J}^{e}\mat{r}_{3,j}^{e} {\left(\mat{L}^{5}\right)}^{T} \mat{\bar{W}} \vec{q}^{e,5}+\vec{1}^{T}\mat{J}^{e}\mat{r}_{3,j}^{e} {\left(\mat{L}^{6}\right)}^{T} \mat{\bar{W}} \vec{q}^{e,6}.
\end{align}
If we define the surface Jacobian determinant and outward normal on the faces as
\begin{align}
  \label{eqn:surf:norm12}
  \mat{S}_{J}^{e,1} \vec{n}^{e,1}_{j} &= -\mat{L}^{1} \mat{J}^{e}\mat{r}_{1,j}^{e}\vec{1}, &
  \mat{S}_{J}^{e,2} \vec{n}^{e,2}_{j} &=  \mat{L}^{2} \mat{J}^{e}\mat{r}_{1,j}^{e}\vec{1}, \\
  \label{eqn:surf:norm34}
  \mat{S}_{J}^{e,3} \vec{n}^{e,3}_{j} &= -\mat{L}^{3} \mat{J}^{e}\mat{r}_{2,j}^{e}\vec{1}, &
  \mat{S}_{J}^{e,4} \vec{n}^{e,4}_{j} &=  \mat{L}^{4} \mat{J}^{e}\mat{r}_{2,j}^{e}\vec{1}, \\
  \label{eqn:surf:norm56}
  \mat{S}_{J}^{e,5} \vec{n}^{e,5}_{j} &= -\mat{L}^{5} \mat{J}^{e}\mat{r}_{3,j}^{e}\vec{1}, &
  \mat{S}_{J}^{e,6} \vec{n}^{e,6}_{j} &=  \mat{L}^{6} \mat{J}^{e}\mat{r}_{3,j}^{e}\vec{1},
\end{align}
then relation~\eref{eqn:disc:div:elm:pre} simplifies to
\begin{align}
  \label{eqn:disc:div:elm:pre:surf}
  \vec{1}^{T} \mat{S}_{j}^{e} \vec{q}^{e} +
  \vec{1}^{T}\mat{J}^{e}\mat{r}_{k,j}^{e} \mat{D}_{k}^{T} \mat{M} \vec{q}^{e}
  =\;&
  \sum_{f=1}^{6}
  {\left(\mat{S}_{J}^{e,f} \vec{n}^{e,f}\right)}^{T} \mat{\bar{W}} \vec{q}^{e,f}.
\end{align}
The vector $\vec{n}^{e,f}_{j}$ is the outward unit normal to the element face at
the quadrature nodes and the diagonal matrix of surface Jacobian determinants
$\mat{S}_{J}^{e,f}$ is the normalization so that $\vec{n}^{e,f}_{j} \circ
\vec{n}^{e,f}_{j} = \vec{1}$ with $\circ$ being the componentwise
(Hadamard) product of two vectors.

With exact math, e.g., no aliasing errors, the following metric
identity~\cite{ThompsonWarsiMastin1985}
\begin{align}
  \label{eqn:met:id:bad}
  J^{e}\pd{r^{e}_{k}}{x_{j}} &=
  \pd{x^{e}_{j+1}}{r_{k+1}} \pd{x^{e}_{j-1}}{r_{k-1}} -
  \pd{x^{e}_{j+1}}{r_{k-1}} \pd{x^{e}_{j-1}}{r_{k+1}}
  &\mbox{(no summation over $j$ and $k$)},
\end{align}
with plus and minus for subscripts $j$ and $k$ defined cyclically on the set
$\{1,2,3\}$, can be used to show that
\begin{align}
  \label{eqn:jac:sum}
  \pd{}{r_{k}}\left(J^{e}\pd{r^{e}_{k}}{x_{j}}\right) = 0.
\end{align}
Hence, if we can show that discretely that
\begin{align}
  \label{eqn:jac:sum:disc}
  \mat{D}_{k}\left(\mat{J}^{e}\mat{r}_{k,j}^{e}\vec{1}\right) = \vec{0},
\end{align}
then we will have shown that discretely the divergence theorem holds at the
element level; this is not quite~\eref{eqn:disc:div} since it does not
involve the mortar elements.  Unfortunately, using the metric identities in the
form of~\eref{eqn:met:id:bad} to show~\eref{eqn:jac:sum} requires the use of the
product rule, and in general this does not hold discretely for all
$p^{e},q^{e}\in\QQ^{N,3}$:
\begin{align}
  \mat{D}_{k}\left(\vec{p}^{e}\circ\vec{q}^{e}\right)
  \neq
  \left(\mat{D}_{k}\vec{p}^{e}\right)
  \circ
  \vec{q}^{e}
  +
  \vec{p}^{e}
  \circ
  \left(\mat{D}_{k}\vec{q}^{e}\right).
\end{align}
Fortunately, if instead of using~\eref{eqn:met:id:bad} the equivalent curl
invariant form~\cite{Kopriva2006jsc} is used,
\begin{align}
  \label{eqn:met:id:good}
  J^{e}\pd{r^{e}_{k}}{x_{j}} =\;&
  \frac{1}{2}
  \pd{}{r_{k+1}}\left(
  x^{e}_{j+1} \pd{x^{e}_{j-1}}{r_{k-1}} -
  \pd{x^{e}_{j+1}}{r_{k-1}} x^{e}_{j-1}
  \right)\\\notag&
  -
  \frac{1}{2}
  \pd{}{r_{k-1}}\left(
  x^{e}_{j+1} \pd{x^{e}_{j-1}}{r_{k+1}} -
  \pd{x^{e}_{j+1}}{r_{k+1}} x^{e}_{j-1}
  \right),
  &\mbox{(no summation over $j$ and $k$)},
\end{align}
it can be shown that~\eref{eqn:jac:sum} holds without invoking the product rule.
To show that this form is beneficial discretely, we define
\begin{align}
  \label{eqn:bar:zeta}
  \bar{\zeta}^{e}_{j,k} &=
  x^{e}_{j+1} \pd{x^{e}_{j-1}}{r_{k}} -
  \pd{x^{e}_{j+1}}{r_{k}} x^{e}_{j-1}
  &\mbox{(no summation over $j$)},
\end{align}
so that the curl invariant metric identities~\eref{eqn:met:id:good} can be
rewritten as
\begin{align}
  \label{eqn:met:id:good2}
  J^{e}\pd{r^{e}_{k}}{x_{j}} =\;&
  \frac{1}{2} \left(\pd{\bar{\zeta}^{e}_{j,k-1}}{r_{k+1}}
  -
  \pd{\bar{\zeta}^{e}_{j,k+1}}{r_{k-1}}\right)
  &\mbox{(no summation over $k$)};
\end{align}
here we have added the overbar accent so that once the final discrete mesh is
defined it can be unaccented since in general $\bar{\zeta}^{e}_{j,k} \notin
\QQ^{N,3}$.
Let $\zeta^{e}_{j,k}\in\QQ^{N,d}$ be some approximation of $\bar{\zeta}^{e}_{j,k}$ at the
nodal degrees of freedom (the details of how we approximate these terms will be
given below). The discrete approximation of
$J^{e}\pd{r^{e}_{k}}{x_{j}}$ can be defined directly from~\eref{eqn:met:id:good2} as
\begin{align}
  \label{eqn:Jr}
  \mat{J}^{e}\mat{r}^{e}_{k,j} &= \mbox{diag}\left(\frac{1}{2}\left(
  \mat{D}_{k+1}\vec{\zeta}^{e}_{j,k-1}
  -
  \mat{D}_{k-1}\vec{\zeta}^{e}_{j,k+1}\right)\right),
  &\mbox{(no summation over $k$)},
\end{align}
where the operator $\mbox{diag}\left(\cdot\right)$ makes a diagonal matrix from
a vector (with component $n$ of the vector being the $n,n$ element of the
matrix). Direct computation of~\eref{eqn:jac:sum:disc} then gives
\begin{align}
  \notag
  \mat{D}_{k}\left(\mat{J}^{e}\mat{r}_{k,j}^{e}\vec{1}\right)
  &=
  \sum_{k=1}^{3}
  \frac{1}{2}\left(
  \mat{D}_{k}\mat{D}_{k+1}\vec{\zeta}^{e}_{j,k-1}
  -
  \mat{D}_{k}\mat{D}_{k-1}\vec{\zeta}^{e}_{j,k+1}\right)\\
  \notag
  &=
  \sum_{k=1}^{3}
  \frac{1}{2}\left(
  \mat{D}_{k}\mat{D}_{k+1}\vec{\zeta}^{e}_{j,k-1}
  -
  \mat{D}_{k+1}\mat{D}_{k}\vec{\zeta}^{e}_{j,k-1}\right)\\
  \notag
  &=
  \sum_{k=1}^{3}
  \frac{1}{2}\left(
  \mat{D}_{k}\mat{D}_{k+1}\vec{\zeta}^{e}_{j,k-1}
  -
  \mat{D}_{k}\mat{D}_{k+1}\vec{\zeta}^{e}_{j,k-1}\right)\\
  &=\vec{0},
\end{align}
where the second equality is a shifting of the summation index in the second
term by $+1$ (which is permissible since all values of $k$ are covered by the
sum) and the third equality follows from the commutative property of the
derivative operators: $\mat{D}_{k}\mat{D}_{j} = \mat{D}_{j}\mat{D}_{k}$.  Thus,~\eref{eqn:disc:div:elm:pre} becomes
\begin{align}
  \label{eqn:disc:div:elm}
  \vec{1}^{T} \mat{S}_{j}^{e} \vec{q}^{e} &=
  \sum_{f=1}^{6}
  {\left(\mat{S}_{J}^{e,f} \vec{n}^{e,f}\right)}^{T} \mat{\bar{W}}
  \vec{q}^{e,f},
\end{align}
and the divergence theorem holds at the element level once we have specified how
$\zeta^{e}_{j,k}$ is approximated from $\bar{\zeta}^{e}_{j,k}$. Importantly, the
relationship~\eref{eqn:disc:div:elm} holds for any approximation of
$\zeta^{e}_{j,k}$ and this fact will be exploited to ensure that the
discrete divergence relation~\eref{eqn:disc:div} holds. Namely if we can show
that the metric terms can be constructed so that
\begin{align}
  \label{eqn:surf:elm:mort1}
  {\left(\mat{S}_{J}^{e,f} \vec{n}^{e,f}_{j}\right)}^{T} \mat{\bar{W}} \vec{q}^{e,f}
  =
  \sum_{m\in\MM^{e,f}} {\left(\vec{n}_{j}^{m[e]}\right)}^{T} \mat{W}^{m}
  \mat{P}^{m,e} \vec{q}^{e},
\end{align}
then we will have shown that~\eref{eqn:disc:div} holds; here the set $\MM^{e,f}
\subset \MM^{e}$ is the set of mortar elements connected to face $f$ of element
$e$. Recall that $\mat{W}^{m}$ includes the surface Jacobian determinants, and
since the mortar quadrature rule is the tensor product LGL quadrature
\begin{align}
  \mat{W}^{m} = \mat{S}_{J}^{m} \mat{\bar{W}},
\end{align}
where $\mat{S}_{J}^{m}$ is a diagonal matrix of mortar-based surface Jacobian
determinants. For an $m\in\MM^{e,f}$ we define the projection operator
$\mat{P}^{m,e}$ to first interpolate to face $f$ followed by projection from
face $f$ to $m$, which allows us to write $\mat{P}^{m,e} =
\mat{P}^{m,e[f]}\mat{L}^{f}$ where $\mat{P}^{m,e[f]}$ is the projection from
face $f$ of element $e$ to mortar $m$.  With these definitions, relation~\eref{eqn:surf:elm:mort1} becomes
\begin{align}
  \label{eqn:surf:elm:mort}
  {\left(\mat{S}_{J}^{e,f} \vec{n}_{j}^{e,f}\right)}^{T} \mat{\bar{W}} \vec{q}^{e,f}
  =
  \sum_{m\in\MM^{e,f}} {\left(\mat{S}_{J}^{m} \vec{n}_{j}^{m[e]}\right)}^{T}
  \mat{\bar{W}}
  \mat{P}^{m,e[f]} \vec{q}^{e,f},
\end{align}
which is a discrete statement on the accuracy of the projection from $e$ to
$m$ when geometry is included. To ensure that~\eref{eqn:surf:elm:mort} is satisfied, we require that
$S_{J}^{m}n_{j}^{m[e]}, S_{J}^{e,f}n_{j}^{e,f}\in\QQ^{N-1,2}$ and that for
connected faces and mortars these are computed in a consistent manner. With
$S_{J}^{m}n_{j}^{m[e]}, S_{J}^{e,f}n_{j}^{e,f}\in\QQ^{N-1,2}$
the quadrature approximations in~\eref{eqn:surf:elm:mort} are exact since the
products in the integral are in $\QQ^{2N-1,2}$ which are integrated exactly by
the tensor product LGL quadrature rule.

Here it is worth noting that one cannot simply interpolate the surface Jacobian
determinant and normal vectors from the full faces to the hanging faces. Though
this would result in~\eref{eqn:surf:elm:mort}, it would cause the volume and
surface metrics to be inconsistent so that~\eref{eqn:disc:div:elm:pre} and~\eref{eqn:disc:div:elm:pre:surf} are no longer equivalent. Thus instead of
modifying $S_{J}^{e,f} n_{j}^{e,f}$ directly. We modify $\zeta^{e}_{j,k}$ so
that the surface, volume, and mortar metrics are consistently calculated.

\subsection{Approximation of Metric Terms}
Before proceeding to the discussion of the approximation of $\zeta^{e}_{j,k}$ it is
necessary to define how the geometry transform is handled. Let $\tilde{x}_{j}^{e}
\in \QQ^{N,3}$ be the evaluation of the coordinate mapping $\bar{x}^{e}_{j}$;
the tilde and bar accents are added here so that the final coordinate points can
be unaccented.  As noted in the text, across nonconforming faces
$\tilde{x}_{j}^{e}$ may not be continuous, thus we let $x_{j}^{e} \in \QQ^{N,3}$
be a discretely watertight version of the geometry. In our code, we make the
mesh watertight by interpolating $\tilde{x}^{e}_{j}$ across nonconforming faces
and edges from the full faces and edges to hanging faces and edges\footnote{In
three-dimensions interpolation across nonconforming edges is necessary so that
gaps and overlaps do not occur between conforming faces connected to
nonconforming edges.}; degrees of freedom interior to the element are not
modified in the procedure used to make the mesh watertight. It is from
$x_{j}^{e}$ that we construct the metric terms.

By~\eref{eqn:surf:norm12}--\eref{eqn:surf:norm56} it follows that:
\begin{align}
  &\mbox{$\zeta^{e}_{j,1}$ affects $S_{J}^{e,f}n_{j}^{e,f}$ for faces $f=3,4,5,6$,}\notag\\
  &\mbox{$\zeta^{e}_{j,2}$ affects $S_{J}^{e,f}n_{j}^{e,f}$ for faces $f=1,2,5,6$,}\notag\\
  &\mbox{$\zeta^{e}_{j,3}$ affects $S_{J}^{e,f}n_{j}^{e,f}$ for faces $f=1,2,3,4$,}\notag
\end{align}
which, along with~\eref{eqn:Jr}, implies that
$S_{J}^{e,f}n_{j}^{e,f}\in\QQ^{N-1,2}$ if
\begin{align}
  \label{eqn:zeta:space1}
  \zeta^{e}_{j,1} &\in \QQ^{N-1} \otimes \QQ^{N} \otimes \QQ^{N} \mbox{ along faces $f=3,4,5,6$},\\
  \label{eqn:zeta:space2}
  \zeta^{e}_{j,2} &\in \QQ^{N} \otimes \QQ^{N-1} \otimes \QQ^{N} \mbox{ along faces $f=1,2,5,6$},\\
  \label{eqn:zeta:space3}
  \zeta^{e}_{j,3} &\in \QQ^{N} \otimes \QQ^{N} \otimes \QQ^{N-1} \mbox{ along faces $f=1,2,3,4$};
\end{align}
namely $\zeta^{e}_{j,k}$ is one polynomial order lower when interpolated to
faces that it affects.  We initially calculate in each element $e$
\begin{align}
  \vec{\tilde{\zeta}}^{e}_{j,k} =
  \mat{P}^{N \rightarrow N-1}_{k}
  \left(
  \vec{x}^{e}_{j+1}
  \circ
  \left(\mat{D}_{k}\vec{x}^{e}_{j-1}\right)
  -
  \left(\mat{D}_{k}\vec{x}^{e}_{j+1}\right)
  \circ
  \vec{x}^{e}_{j-1}
  \right).
\end{align}
Here the matrix $\mat{P}^{N \rightarrow N-1}_{k}$, and its continuous
counterpart $\mathcal{P}^{N\rightarrow N-1}_{k}$, is an L$^{2}$-projection
operator which removes the highest mode in the $r_{k}$-direction along faces
which are parallel to $r_{k}$. Thus, $\mathcal{P}^{N \rightarrow N-1}_{1}$ modifies
$\tilde{\zeta}^{e}_{j,1}$ along faces 3, 4, 5, and 6 so that with respect to
$r_{1}$ it is a polynomial of degree $N-1$; the dependence of
$\tilde{\zeta}^{e}_{j,1}$ on $r_{2}$ and $r_{3}$ is unmodified by
$\mathcal{P}^{N \rightarrow N-1}_{1}$.

The quantities $\zeta^{e}_{j,k}$ are then the interpolation and scaling of
$\tilde{\zeta}^{e}_{j,k}$ from the full faces and edges to the hanging faces and
edges. To make this concrete, suppose that face $1$ of element $e$ is hanging,
we modify $\tilde{\zeta}^{e}_{j,2}$ and $\tilde{\zeta}^{e}_{j,3}$
on face $1$ so that the surface Jacobian determinant times the normal vector
will be consistent. Let face $f'$ of element $e'$ be the full face for this
nonconforming interface. We replace values of $\tilde{\zeta}^{e}_{j,2}$ on
degrees of freedom along face $1$ of $e$ by the values interpolated from face
$f'$ of element $e'$ as
\begin{align}
  \label{eqn:zeta2}
  \vec{\zeta}^{e}_{j,2} =
  \left(\mat{I} - {\left(\mat{L}^{1}\right)}^{T} \mat{L}^{1}\right)
  \vec{\tilde{\zeta}}^{e}_{j,2}
  +
  \frac{1}{2} {\left(\mat{L}^{1}\right)}^{T} \mat{I}^{e' \rightarrow e}
  \mat{L}^{f'} \vec{\tilde{\zeta}}^{e'}_{j,k'_{2}}.
\end{align}
Here $k'_{2}$ is the reference direction of element $e'$ that corresponds to
$r_{2}$ of element $e$. The matrix $\mat{I}^{e' \rightarrow e}$ interpolates
from the full face of $e'$ to the hanging face of $e$ as well as taking into
account any flips or rotations needed to transform between the two elements
reference directions. For instance, if face $1$ of $e$ was in the lower-left
quadrant of the full face $\mat{I}^{e' \rightarrow e}$ would interpolate from
tensor product LGL points over $[-1,1]\times[-1,1]$ to points over
$[-1,0]\times[-1,0]$. The scaling by $1/2$ in~\eref{eqn:zeta2} is needed in
order to transform the derivative from $\hat{e}'$ to $\hat{e}$.
The same procedure can be applied to calculate $\vec{\zeta}^{e}_{j,3}$ from
$\tilde{\zeta}^{e}_{j,3}$ and $\tilde{\zeta}^{e'}_{j,k_{3}'}$. Since
$\zeta^{e}_{j,1}$ does not affect $S_{J}^{e,1}n_{j}^{e,1}$ we set
$\zeta^{e}_{j,1} = \tilde{\zeta}^{e}_{j,1}$.

For hanging edges, a similar procedure is used, except that only one of the
$\zeta^{e}_{j,k}$ values needs to be updated. For instance if
the edge between faces $1$ and $3$ is hanging, $\zeta^{e}_{j,3}$ needs to be
defined from $\tilde{\zeta}^{e}_{j,3}$ and $\tilde{\zeta}^{e'}_{j,k'_{3}}$ where
element $e'$ has a full edge corresponding to hanging edge of element $e$.

When multiple faces and edges are hanging, the procedure outlined above can be
applied iteratively to form $\zeta^{e}_{j,k}$. In our implementation we found it
convenient to have element $e$ mimic the calculation that $e'$ performs along
the corresponding full faces and edges. This alternate interpretation makes it
clear that our approach ensures that aliasing errors in the calculation of
$\zeta^{e}_{j,k}$ are incurred in a similar manner across hanging faces and edges,
and that the projection out of the highest modes is done over the same domain.

We now show that the above approach to the computation of the metric
terms results in~\eref{eqn:surf:elm:mort}. We first consider the minus side of
a mortar element. Namely, if $m\in\MM^{e,f}$ with $e \in
\EE^{-m}$, by definition $S_{J}^{m} n_{j}^{m[e]} = S_{J}^{e,f} n_{j}^{e,f}$
(since mortar elements always conform to the minus side faces). It
follows that $\mathcal{P}^{m,e[f]}$ is an identity operation and that $\MM^{e,f}
= \{m\}$, so~\eref{eqn:surf:elm:mort} follows immediately. Thus, all that
remains is to show that~\eref{eqn:surf:elm:mort} holds for elements on the plus
side of the mortar.

In the remaining we are going to assume that face $1$ of element $e$ is on the
plus side of all mortars in $\MM^{e,1}$. Additionally, without loss of
generality we assume that for each $m \in \MM^{e,1}$ that the minus side element
$e' \in \EE^{-m}$ is connected to $m$ through face $2$; by definition $e'$ is
the only element on the minus side of $m$.  We also assume that reference
directions are the same for both $e$ and $e'$, so that the ordering of there
degrees of freedom is the same.  These assumptions mean that
$S_{J}^{e,1}n_{j}^{e,1}$ is calculated from $\zeta_{j,2}^{e}$ and
$\zeta_{j,3}^{e}$ and that for each $m \in \MM^{e,1}$ and $e' \in \EE^{-m}$ that
the product $S_{J}^{e',1}n_{j}^{e',1} = S_{J}^{m}n_{j}^{m}$ is calculated from
$\zeta_{j,2}^{e'}$ and $\zeta_{j,3}^{e'}$.

Let face $1$ of element $e$ be on a conforming interface. Let $m \in \MM^{e,1}$
be the mortar element that face $1$ of $e$ is connected to and let face $2$ of
$e'$ be the element face on the minus side of $m$. By the orientation assumption
it follows that
\begin{align}
  \mat{L}^{1}\vec{\zeta}^{e}_{j,2} &= \mat{L}^{2}\vec{\zeta}^{e'}_{j,2},&
  \mat{L}^{1}\vec{\zeta}^{e}_{j,3} &= \mat{L}^{2}\vec{\zeta}^{e'}_{j,3};
\end{align}
namely these quantities are continuous across the interface.
Note that $S_{J}^{e,1}n_{j}^{e,1}$ and $S_{J}^{e',2}n_{j}^{e',2}$ are calculated
from $J^{e}r^{e}_{1,j}$ and $J^{e'}r^{e'}_{1,j}$, respectively;
see~\eref{eqn:surf:norm12}. This means that $S_{J}^{e,1}n_{j}^{e,1} =
-S_{J}^{e',2}n_{j}^{e',2}$ since $J^{e'}r^{e'}_{1,j}$ and $J^{e'}r^{e'}_{1,j}$
are calculated using derivatives in the $r_{2}$ and $r_{3}$ directions (i.e.,
directions along faces $1$ and $2$), and since $S_{J}^{e',2}n_{j}^{e',2} =
S_{J}^{m}n_{j}^{m}$ it follows that $S_{J}^{m}n_{j}^{m[e]} =
-S_{J}^{m}n_{j}^{m[e]} = S_{J}^{e,1}n_{j}^{e,1}$ and
\begin{align}
  {\left(\mat{S}_{J}^{e,1} \vec{n}_{j}^{e,1}\right)}^{T} \mat{\bar{W}} \vec{q}^{e,1}
  =
  {\left(\mat{S}_{J}^{m} \vec{n}_{j}^{m[e]}\right)}^{T}
  \mat{\bar{W}}
  \vec{q}^{e,1};
\end{align}
in this case the projection from face $1$ of $e$ to the mortar is an identity
operation.

We now move on to the case that face $1$ of $e$ is connected to a nonconforming
interface. Before proceeding we note that since the approximations
$\zeta^{e}_{j,k}$ are in the spaces indicated by~\eref{eqn:zeta:space1}--\eref{eqn:zeta:space3}, it follows that on each
$S_{J}^{e,1}n_{j}^{e,1} \in \QQ^{N-1, 2}$ and that $S_{J}^{m}n_{j}^{m} \in
\QQ^{N-1,2}$ for all $m \in \MM$. Given this, and the accuracy of the
quadrature rule, it follows that the left- and right-hand sides of~\eref{eqn:surf:elm:mort} can be replaced with integrals:
\begin{align}
  {\left(\mat{S}_{J}^{e,1} \vec{n}_{j}^{e,1}\right)}^{T} \mat{\bar{W}} \vec{q}^{e,1}
  &=
  \int_{-1}^{1}\int_{-1}^{1} S_{J}^{e,1} n_{j}^{e,1} q^{e,1},\\
  {\left(\mat{S}_{J}^{m} \vec{n}_{j}^{m[e]}\right)}^{T}
  \mat{\bar{W}}
  \mat{P}^{m,e[1]} \vec{q}^{e,1}
  &=
  \int_{-1}^{1}\int_{-1}^{1} S_{J}^{m} n_{j}^{m[e]} \mathcal{P}^{m,e[1]}
  q^{e,1},
\end{align}
where $\mathcal{P}^{m,e[1]}$ is the projection operator from face $1$ of element
$e$ to the mortar (which corresponds with the matrix operator
$\mat{P}^{m,e[1]}$).  We now consider the case of the full-side and split-side
mortars separately.

When full-side mortars are being used, face $1$ of element $e$ is a hanging face
(since a minus side of a full-side mortar is the full face). Also, it follows
that there is only one mortar element in $\MM^{e,1}$ which we denote as $m$; as
before $e'$ is the element connected to the minus side of $m$ through face $2$.
Since face $2$ of element $e'$ is the full face, we have that by~\eref{eqn:zeta2}:
\begin{align}
  \vec{\zeta}^{e}_{j,2} &=
  \left(\mat{I} - {\left(\mat{L}^{1}\right)}^{T} \mat{L}^{1}\right)
  \vec{\tilde{\zeta}}^{e}_{j,2}
  +
  \frac{1}{2} {\left(\mat{L}^{1}\right)}^{T} \mat{I}^{e' \rightarrow e}
  \mat{L}^{2} \vec{\tilde{\zeta}}^{e'}_{j,2},\\
  \vec{\zeta}^{e}_{j,3} &=
  \left(\mat{I} - {\left(\mat{L}^{1}\right)}^{T} \mat{L}^{1}\right)
  \vec{\tilde{\zeta}}^{e}_{j,3}
  +
  \frac{1}{2} {\left(\mat{L}^{1}\right)}^{T} \mat{I}^{e' \rightarrow e}
  \mat{L}^{2} \vec{\tilde{\zeta}}^{e'}_{j,3}.
\end{align}
As in the conforming case, $S_{J}^{e,1}n_{j}^{e,1}$ and
$S_{J}^{e',2}n_{j}^{e',2}$ are calculated using only derivatives in the $r_{2}$
and $r_{3}$ directions, thus it follows that
\begin{align}
  \mat{L}^{1} \left(\mat{J}^{e}\mat{r}^{e}_{1,j}\vec{1}\right)
  =
  \frac{1}{4}\mat{I}^{e' \rightarrow e}
  \mat{L}^{2} \left(\mat{J}^{e}\mat{r}^{e'}_{1,j}\vec{1}\right);
\end{align}
here the factor of $1/4$ arises because of the $1/2$ in~\eref{eqn:zeta2} along with an additional derivative taken on
reference elements. This means that
\begin{align}
  \mat{S}_{J}^{e,1}\vec{n}_{j}^{e,1}
  =
  -
  \frac{1}{4}\mat{I}^{e' \rightarrow e}
  \mat{S}_{J}^{m}\vec{n}_{j}^{m}
  =
  \frac{1}{4}\mat{I}^{e' \rightarrow e}
  \mat{S}_{J}^{m}\vec{n}_{j}^{m[e]},
\end{align}
and $S_{J}^{e,1}n_{j}^{e,1}$ and $S_{J}^{m}n_{j}^{m[e]}$ are
the same polynomial. Hence, since we
use exact L$^2$ projection to go from face $1$ of $e$ to $m$ it follows that
\begin{align}
  \int_{-1}^{1}\int_{-1}^{1} S_{J}^{e,1} n_{j}^{e,1} q^{e,1}
  &=
  \int_{-1}^{1}\int_{-1}^{1} S_{J}^{m} n_{j}^{m[e]} \mathcal{P}^{m,e[1]}
  q^{e,1},
\end{align}
which is the desired result.

When split-side mortars are used, face $1$ of element $e$ is a full face (with
the minus side of the mortar being the hanging face). Since $e$ is the full-side, it follows that $\MM^{e,1}$ contains four mortar elements which we denote
with $m_{l}$ for $l = 1,2,3,4$. Let the minus side of $m_{l}$ be face $2$ of
element $e'_{l}$. Additionally, since mortar $m_{l}$ conforms face $2$ of
$e'_{l}$ it follows from~\eref{eqn:surf:norm12} and~\eref{eqn:zeta2} that
\begin{align}
  \mat{S}_{J}^{m'_{l}}\vec{n}_{j}^{m'_{l}[e]}
  =
  -
  \mat{S}_{J}^{m'_{l}}\vec{n}_{j}^{m'_{l}}
  =
  -
  \mat{S}_{J}^{e'_{l},2}\vec{n}_{j}^{e'_{l},2}
  =
  \frac{1}{4}\mat{I}^{e \rightarrow e'_{l}}
  \mat{S}_{J}^{e,1}\vec{n}_{j}^{e,1},
\end{align}
namely the surface Jacobian determinant times normal on the mortar is the scaled
interpolant of the product on face $1$ of $e$.  Since face $1$ of element $e$
is the full face, it follows that $\mathcal{P}^{m_{l},e[1]} =
\mathcal{I}^{e\rightarrow e'_{l}}$, namely the projection from face $1$ of $e$
to $m_{l}$ is the same as interpolation to the face $2$ of element $e'_{l}$.
Thus we have that
\begin{align}
  \notag
  \int_{-1}^{1}\int_{-1}^{1} S_{J}^{e,1} n_{j}^{e,1} q^{e,1}
  =&
  \int_{0}^{1}\int_{0}^{1} S_{J}^{e,1} n_{j}^{e,1} q^{e,1}
  +
  \int_{-1}^{0}\int_{0}^{1} S_{J}^{e,1} n_{j}^{e,1} q^{e,1}\\
  \notag
  &+
  \int_{0}^{1}\int_{-1}^{0} S_{J}^{e,1} n_{j}^{e,1} q^{e,1}
  +
  \int_{-1}^{0}\int_{-1}^{0} S_{J}^{e,1} n_{j}^{e,1} q^{e,1}\\
  =&
  \sum_{m_{l}\in\MM^{e,1}}
  \int_{-1}^{1}\int_{-1}^{1} S_{J}^{m_{l}} n_{j}^{m_{l}[e]}
  \mathcal{I}^{e\rightarrow e'_{l}} q^{e,1}.
\end{align}

For conforming mortars and the two types of nonconforming mortars (split- and
full-side) considered in this work we have shown that~\eref{eqn:surf:elm:mort}
holds. As noted above the properties of LGL quadrature imply that if the volume
and surface metric terms are consistently calculated and if~\eref{eqn:surf:elm:mort} holds, then the discrete divergence relation~\eref{eqn:disc:div} holds. To ensure that the metric terms are consistent across
the mortars it is necessary to make sure that certain intermediate calculations
are consistent across nonconforming faces and edges.  Making the mesh discretely
watertight alone is not sufficient since differences in the aliasing errors,
which result from interpolating high-order polynomials to a lower-order spaces,
are different across the nonconforming faces. Finally, in the above construction
(and our implementation), the polynomial order of $S_{J}^{e,f}n_{j}^{e,f}$ is
lowered for all faces (conforming and nonconforming). This is a bit more
aggressive than needed, and~\eref{eqn:surf:elm:mort} would be satisfied if the
reduction of polynomial order was done only across the nonconforming faces and
edges.


\begin{thebibliography}{10}

\bibitem{MATLAB2015b}
{MATLAB version 8.6.0.267246 (R2015b)}.
\newblock Natick, Massachusetts.

\bibitem{Black1999Kybernetika}
K.~Black.
\newblock A conservative spectral element method for the approximation of
  compressible fluid flow.
\newblock {\em Kybernetika}, 35(1):133--146, 1999.

\bibitem{BuiThanhGhattas2012sinum}
T.~{Bui-Thanh} and O.~Ghattas.
\newblock Analysis of an hp-nonconforming discontinuous galerkin spectral
  element method for wave propagation.
\newblock {\em {SIAM} Journal on Numerical Analysis}, 50(3):1801--1826, 2012.
\newblock \doi{10.1137/110828010}.

\bibitem{BursteddeWilcoxGhattas11}
Carsten Burstedde, Lucas~C. Wilcox, and Omar Ghattas.
\newblock {p4est}: Scalable algorithms for parallel adaptive mesh refinement on
  forests of octrees.
\newblock {\em {SIAM} Journal on Scientific Computing}, 33(3):1103--1133, 2011.
\newblock \doi{10.1137/100791634}.

\bibitem{CarpenterKennedy1994}
M.~H. Carpenter and C.~A. Kennedy.
\newblock Fourth-order {2N-storage} {Runge-Kutta} schemes.
\newblock Technical Report NASA TM-109112, National Aeronautics and Space
  Administration, Langley Research Center, Hampton, VA, 1994.

\bibitem{ChanWangModaveRemacleWarburton2016jcp}
J.~Chan, Z.~Wang, A.~Modave, {J.-F.} Remacle, and T.~Warburton.
\newblock {GPU}-accelerated discontinuous {Galerkin} methods on hybrid meshes.
\newblock {\em Journal of Computational Physics}, 318:142--168, 2016.
\newblock \doi{10.1016/j.jcp.2016.04.003}.

\bibitem{ChanHewettWarburton2017_curviWADG}
Jesse Chan, Russell~J. Hewett, and T.~Warburton.
\newblock Weight-adjusted discontinuous galerkin methods: Curvilinear meshes.
\newblock {\em SIAM Journal on Scientific Computing}, 39(6):A2395--A2421, 2017.
\newblock \doi{10.1137/16M1089198}.

\bibitem{DriscollHaleTrefethen2014}
T.~A. Driscoll, N.~Hale, and L.~N. Trefethen, editors.
\newblock {\em Chebfun Guide}.
\newblock Pafnuty Publications, Oxford, 2014.

\bibitem{FisherCarpenterNordstromYamaleevSwanson2012jcp}
T.~C. Fisher, M.~H. Carpenter, J.~Nordstr\"{o}m, N.~K. Yamaleev, and
  C.~Swanson.
\newblock Discretely conservative finite-difference formulations for nonlinear
  conservation laws in split form: {Theory} and boundary conditions.
\newblock {\em Journal of Computational Physics}, 234:353–375, 2013.
\newblock \doi{10.1016/j.jcp.2012.09.026}.

\bibitem{FriedrichEtAl2017}
Lucas Friedrich, David~C. Del Rey~Fern{\'a}ndez, Andrew~R. Winters, Gregor~J.
  Gassner, David~W. Zingg, and Jason Hicken.
\newblock Conservative and stable degree preserving {SBP} operators for
  non-conforming meshes.
\newblock {\em Journal of Scientific Computing}, 2017.
\newblock \doi{10.1007/s10915-017-0563-z}.

\bibitem{Gassner2013}
G.~J. Gassner.
\newblock A skew-symmetric discontinuous galerkin spectral element
  discretization and its relation to sbp-sat finite difference methods.
\newblock {\em {SIAM} Journal on Scientific Computing}, 35(3):A1233--A1253,
  2013.
\newblock \doi{10.1137/120890144}.

\bibitem{IsaacBursteddeWilcoxEtAl15}
Tobin Isaac, Carsten Burstedde, Lucas~C. Wilcox, and Omar Ghattas.
\newblock Recursive algorithms for distributed forests of octrees.
\newblock {\em {SIAM} Journal on Scientific Computing}, 37(5):C497--C531, 2015.
  \newblock \doi{10.1137/140970963}.

\bibitem{Kopriva2006jsc}
D.~A. Kopriva.
\newblock Metric identities and the discontinuous spectral element method on
  curvilinear meshes.
\newblock {\em Journal of Scientific Computing}, 26(3):301--327, 2006.
\newblock \doi{10.1007/s10915-005-9070-8}.

\bibitem{Kopriva1996jcp}
D.~A. Kopriva.
\newblock A conservative staggered-grid {Chebyshev} multidomain method for
  compressible flows. {II}. {A} semi-structured method.
\newblock {\em Journal of Computational Physics}, 128(2):475--488, 1996.
\newblock \doi{10.1006/jcph.1996.0225}.

\bibitem{Kopriva2009book}
D.~A. Kopriva.
\newblock {\em Implementing Spectral Methods for Partial Differential
  Equations}.
\newblock Scientific Computation. Springer Netherlands, 2009.
\newblock \doi{10.1007/978-90-481-2261-5}.

\bibitem{KoprivaGassner2014SISC}
D.~A. Kopriva and G.~J. Gassner.
\newblock An energy stable discontinuous {Galerkin} spectral element
  discretization for variable coefficient advection problems.
\newblock {\em {SIAM} Journal on Scientific Computing}, 36(4):A2076--A2099,
  2014.
\newblock \doi{10.1137/130928650}.

\bibitem{KoprivaWoodruffHussaini2002}
D.~A. Kopriva, S.~L. Woodruff, and M.~Y. Hussaini.
\newblock Computation of electromagnetic scattering with a non-conforming
  discontinuous spectral element method.
\newblock {\em International Journal for Numerical Methods in Engineering},
  53(1):105--122, 2002.
\newblock \doi{10.1002/nme.394}.

\bibitem{KoprivaGassner2010}
D.~A. Kopriva and G.~J. Gassner.
\newblock On the quadrature and weak form choices in collocation type
  discontinuous {Galerkin} spectral element methods.
\newblock {\em Journal of Scientific Computing}, 44(2):136--155, 2010.
\newblock \doi{10.1007/s10915-010-9372-3}.

\bibitem{KozdonDunhamNordstrom2013}
J.~E. Kozdon, E.~M. Dunham, and J.~Nordstr\"{o}m.
\newblock Simulation of dynamic earthquake ruptures in complex geometries using
  high-order finite difference methods.
\newblock {\em Journal of Scientific Computing}, 55(1):92--124, 2013.

\bibitem{KreissScherer1974}
{H.-O.} Kreiss and G.~Scherer.
\newblock Finite element and finite difference methods for hyperbolic partial
  differential equations.
\newblock In {\em Mathematical aspects of finite elements in partial
  differential equations; Proceedings of the Symposium}, pages 195--212,
  Madison, {WI}, 1974.
\newblock \doi{10.1016/b978-0-12-208350-1.50012-1}.

\bibitem{Nordstrom2006}
J.~Nordstr\"{o}m.
\newblock Conservative finite difference formulations, variable coefficients,
  energy estimates and artificial dissipation.
\newblock {\em Journal of Scientific Computing}, 29(3):375--404, 2006.
\newblock \doi{10.1007/s10915-005-9013-4}.

\bibitem{Slaughter2002}
W.~S. Slaughter.
\newblock {\em {The Linearized Theory of Elasticity}}.
\newblock Birkh\"{a}user, 2002.

\bibitem{ThompsonWarsiMastin1985}
Joe~F. Thompson, Z.~U.A. Warsi, and C.~Wayne Mastin.
\newblock {\em Numerical Grid Generation: Foundations and Applications}.
\newblock Elsevier North-Holland, Inc., New York, NY, USA, 1985.

\bibitem{Warburton2010}
T.~Warburton.
\newblock A low storage curvilinear discontinuous {Galerkin} time-domain method
  for electromagnetics.
\newblock In {\em Electromagnetic Theory (EMTS), 2010 URSI International
  Symposium on}, pages 996--999, August 2010.
\newblock \doi{10.1109/URSI-EMTS.2010.5637392}.

\bibitem{WilcoxStadlerBuiThanhGhattas2015}
Lucas~C. Wilcox, Georg Stadler, Tan Bui-Thanh, and Omar Ghattas.
\newblock Discretely exact derivatives for hyperbolic {PDE}-constrained
  optimization problems discretized by the discontinuous {Galerkin} method.
\newblock {\em Journal of Scientific Computing}, 63(1):138--162, 2015.
\newblock \doi{10.1007/s10915-014-9890-5}.

\bibitem{WilcoxStadlerBursteddeEtAl2010}
Lucas~C. Wilcox, Georg Stadler, Carsten Burstedde, and Omar Ghattas.
\newblock A high-order discontinuous {Galerkin} method for wave propagation
  through coupled elastic-acoustic media.
\newblock {\em Journal of Computational Physics}, 229(24):9373--9396, 2010.
\newblock \doi{10.1016/j.jcp.2010.09.008}.

\bibitem{Zang1991ANM}
T.~A. Zang.
\newblock On the rotation and skew-symmetric forms for incompressible flow
  simulations.
\newblock {\em Applied Numerical Mathematics}, 7(1):27--40, 1991.
\newblock \doi{10.1016/0168-9274(91)90102-6}.

\end{thebibliography}
\end{document}